\tikzstyle{startstop} = [minimum width=3cm, text width = 5cm, minimum height=1cm, text centered, draw=black]
\tikzstyle{arrow} = [thick, ->, >=stealth]
\theoremstyle{definition}
\newtheorem{definition}{Definition}[section]
\newtheorem{exa}[definition]{Example}
\newtheorem*{que*}{Question}
\theoremstyle{remark}
\newtheorem{rem}[definition]{Remark}
\newtheorem{example}[definition]{Example}
\theoremstyle{plain}
\newtheorem{theorem}[definition]{Theorem}
\newtheorem*{theorem*}{Theorem}
\newtheorem{lemma}[definition]{Lemma}
\newtheorem{prop}[definition]{Proposition}
\newcommand\CCt{{\mathbb C}\{\!\!\{t\}\!\!\}}
\newcommand\RRt{{\mathbb R}\{\!\!\{t\}\!\!\}}
\newcommand\KK{{\mathbb K}}
\newcommand\RR{{\mathbb R}}
\newcommand\BShape{B}
\newcommand\cT{{\mathcal T}}
\DeclareMathOperator{\Trop}{Trop}
\newcommand\polymake{{\tt polymake}\xspace}
 \newcommand{\simplex}{
\coordinate (p0) at (0,0);
\coordinate (p1) at (1,0);
\coordinate (p2) at (0,1);
\coordinate (p3) at (2,0);
\coordinate (p4) at (1,1);
\coordinate (p5) at (0,2);
\coordinate (p6) at (3,0);
\coordinate (p7) at (2,1);
\coordinate (p8) at (1,2);
\coordinate (p9) at (0,3);
\coordinate (p10) at (4,0);
\coordinate (p11) at (3,1);
\coordinate (p12) at (2,2);
\coordinate (p13) at (1,3);
\coordinate (p14) at (0,4);
 \path (p0) node[circle, gray, fill, inner sep=1.5]{};
 \path (p1) node[circle, gray,fill, inner sep=1.5]{};
 \path (p2) node[circle, gray, fill, inner sep=1.5]{};
 \path (p3) node[circle, gray, fill, inner sep=1.5]{};
 \path (p4) node[circle, gray, fill, inner sep=1.5]{};
 \path (p5) node[circle, gray, fill, inner sep=1.5]{};
 \path (p6) node[circle, gray, fill, inner sep=1.5]{};
 \path (p7) node[circle, gray, fill, inner sep=1.5]{};
 \path (p8) node[circle, gray, fill, inner sep=1.5]{};
 \path (p9) node[circle, gray, fill, inner sep=1.5]{};
 \path (p10) node[circle, gray, fill, inner sep=1.5]{};
 \path (p11) node[circle, gray, fill, inner sep=1.5]{};
 \path (p12) node[circle, gray, fill, inner sep=1.5]{};
 \path (p13) node[circle, gray, fill, inner sep=1.5]{};
 \path (p14) node[circle, gray, fill, inner sep=1.5]{};
\draw[thin, gray] (0,0) -- (0,4);
\draw[thin, gray] (0,0) -- (4,0);
\draw[thin, gray] (4,0) -- (0,4);
 }
\title{A tropical count of real bitangents to  plane quartic curves}
\author{Alheydis Geiger}
\address[Alheydis Geiger]{Department of Mathematics, University of T\"{u}bingen, Germany}
\email{\href{mailto:alheydis.geiger@math.uni-tuebingen.de}{alheydis.geiger@math.uni-tuebingen.de}}
\author{Marta Panizzut}
\address[Marta Panizzut]{ Technische Universität Berlin, Chair of Discrete Mathematics/Geometry  }
\email{\href{mailto:panizzut@math.tu-berlin.de}{panizzut@math.tu-berlin.de}}
\subjclass[2020]{14T15, 14T20}
\begin{document}
\bibliographystyle{plain}
\setlength{\unitlength}{1cm}

\begin{abstract}
A smooth tropical quartic curve has seven tropical bitangent classes. Their  shapes can vary within the same combinatorial type of  curve. We study deformations of these shapes and we show that the conditions determined by Cueto and Markwig for lifting them to real bitangent lines are independent of the deformations. From this we deduce a tropical proof of  Plücker and Zeuthen's  count of the number of real bitangents to smooth plane quartic curves. 		

\end{abstract}
\maketitle
	
\section{Introduction}
\noindent
The number of bitangent lines to a smooth plane quartic curve is a classical result from the 19th century. Pl\"ucker proved in 1834 that such a curve in the complex projective plane has $28$ bitangents, \cite{Plue34}. 
Building on an extensive first count by Pl\"ucker \cite{Plue39}, Zeuthen proved that a smooth quartic curve can have either $4$, $8$, $16$ or $28$ real bitangents depending  on the topology of the underlying real curve in real projective plane, \cite{Zeu73}. 
In this paper we provide a count of real bitangents to a tropically smooth plane algebraic quartic curve using computations and techniques in tropical geometry.

Smooth tropical quartic curves can have exactly $7$ or infinitely many bitangent tropical lines grouped into $7$ equivalence classes modulo continuous translations that preserve bitangency. This was proven  by Baker et al. \cite{BLMPR16} using the theory of divisors  on tropical curves. Bitangents to non-smooth tropical quartics were investigated in \cite{LeLe18}. 

Questions about lifting tropical bitangents were first considered by by Chan and Jiradilok \cite{ChJi17} for $K_4$-curves. Len and Markwig \cite{LeMa19} showed that under certain  genericity conditions, each class lifts to four bitangent lines over the complex Puiseux series, reproducing the $28$ bitangents from Pl\"ucker's theorem for tropically smooth quartic curves. These four lifts can be realized by more than one representative in the class and with different multiplicities. 
Cueto and Markwig \cite{CueMa20} proved that each class lifts either zero or four times to real bitangent lines, and they remarked that these real lifts are always totally real, meaning that also the tangency points have real coordinates. 
As a consequence, every number of real bitangents appearing for a tropically smooth real quartic curve must be divisible by $4$. However, a tropical proof of why only the numbers $4$, $8$, $16$ and $28$ are observed was still open.
The main result of this paper closes this research gap by providing a tropical version of Pl\"ucker and Zeuthen's count.

\begin{theorem}\label{thm:Pluecker}
	Let $\Gamma$ be a generic tropicalization of a smooth quartic plane curve  defined over a real closed complete non-Archimedean valued field. Either  $1$, $2$, $4$ or $7$ of its bitangent classes admit a lift to real bitangents near the tropical limit. 	Every smooth quartic curve whose tropicalization is generic has either $4$, $8$, $16$ or $28$ totally real bitangents.
\end{theorem}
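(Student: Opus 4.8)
The plan is to reduce the theorem to a finite combinatorial count---how many of the seven bitangent classes lift---and then to isolate the structure forcing this count into $\{1,2,4,7\}$. The second assertion is a formal consequence of the first: by the result of Cueto and Markwig recalled above, each bitangent class lifts either zero or four times and every such lift is totally real, so if exactly $k$ of the seven classes admit a real lift, the curve carries exactly $4k$ totally real bitangents. It therefore suffices to show $k \in \{1,2,4,7\}$, whence the values $4,8,16,28$.

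Next I would convert lifting into an explicit sign condition. For a fixed generic combinatorial type of $\Gamma$, the Cueto--Markwig criterion detects lifting of a class through the sign of a quantity assembled from the leading real coefficients of the defining polynomial. The decisive input, established earlier in the paper, is that this sign is an invariant of the combinatorial type: it does not change under the shape deformations, and hence depends only on the signs of finitely many coefficients. I would thus attach to each class $i \in \{1,\dots,7\}$ a Boolean lifting predicate $f_i$ of a sign vector $\epsilon$, so that $k = \#\{\, i : f_i(\epsilon)=1 \,\}$, reducing the theorem to the claim that $\#\{\, i : f_i(\epsilon)=1 \,\} \in \{1,2,4,7\}$ as $\epsilon$ ranges over the admissible sign vectors.

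With the predicates in hand the question is finite---finitely many generic types, finitely many sign vectors---and I would trim the casework using the $S_3$-symmetry of the Newton triangle and the coordinatewise sign changes. I expect the outcome to be governed by the following structure, which I would aim to establish: one class lifts unconditionally (matching the classical fact that a smooth real quartic always has at least four real bitangents), while the remaining six are indexed by the two-element subsets of a four-element set of sign parameters, the class indexed by $\{a,b\}$ lifting precisely when both $a$ and $b$ are positive. Writing $m$ for the number of positive parameters then gives $k = 1 + \binom{m}{2}$, which runs through $1,1,2,4,7$ for $m=0,1,2,3,4$ and so attains exactly the four admissible values; here the four parameters correspond to the ovals of the real quartic and the six pair-classes to the four common tangents of each pair of ovals, recovering Zeuthen's count.

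The main obstacle is this last step, and in particular excluding $k=3,5,6$: were the seven predicates independent, every cardinality from $0$ to $7$ would occur, so the restriction reflects a genuine dependence among the $f_i$. Proving the rule $k = 1 + \binom{m}{2}$---equivalently, that six of the predicates are the pairwise conjunctions of four basic sign conditions while the seventh is identically satisfied---is the heart of the argument, whereas the preceding reductions are comparatively formal. The deformation-independence result established earlier is precisely what legitimizes the whole reduction, allowing the count to be performed on finite sign data rather than on the continuum of shapes.
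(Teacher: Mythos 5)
Your reductions match the paper's: the factor-of-four statement follows from Cueto--Markwig exactly as you say, and the deformation-invariance of the lifting conditions (Theorem \ref{theorem:lifting}) is indeed what turns the problem into a finite check of sign predicates attached to the seven deformation classes of each combinatorial type. Up to that point you and the paper agree. But the heart of the theorem --- why the count $k$ of lifting classes can only be $1$, $2$, $4$ or $7$ --- is precisely the step you do not carry out. You propose to \emph{establish} that one predicate is identically true and the other six are the pairwise conjunctions of four basic sign parameters, giving $k = 1 + \binom{m}{2}$, but you only state this as a goal and acknowledge it as ``the heart of the argument.'' That is a genuine gap, not a sketchable formality: it is exactly the dependence among the seven predicates that rules out $k = 0, 3, 5, 6$, and nothing in your proposal supplies it. The paper closes this step not by a structural theorem but by brute force: a \polymake{} enumeration of the deformation motifs in all $1278$ regular unimodular triangulations of $4\Delta_2$ (minus the eight failing the genericity condition relevant to shape (C)), followed by evaluation of the conditions of Table \ref{tab:tab11new} on all $2^{15}$ sign vectors, which returns the counts $1$, $2$, $4$, $7$ as an observed fact.

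Beyond being unproven, your proposed structure is not evidently compatible with the actual form of the lifting conditions. In Table \ref{tab:tab11new} some deformation classes are cut out by a single multiplicative sign condition (e.g.\ $s_{00}s_{22}>0$), others by two (e.g.\ class (A) or (C)), and the conditions for different classes share variables in type-dependent ways; recasting all of them, uniformly over every generic combinatorial type, as pairwise conjunctions of four common sign parameters would itself require an argument --- plausibly again a case analysis over the $1270$ generic types, i.e.\ work of the same order as the enumeration it is meant to replace. Your oval heuristic ($m$ ovals giving $4\bigl(1+\binom{m}{2}\bigr)$ real bitangents) is consistent with Zeuthen's classification and would be a genuinely more conceptual proof if established tropically, but as written the proposal assumes the conclusion at its critical juncture.
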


To prove this theorem, we relied on the classification of the combinatorial structure of the bitangent classes by Cueto and Markwig \cite{CueMa20}.
Smooth tropical quartics are dual to unimodular triangulations of the fourth dilation of the standard 2-simplex. The dual subdivision is also called the combinatorial type of a quartic. The shapes of bitangent classes of tropical quartic curves with the same combinatorial type do not need to be equal, as illustrated in Example \ref{ex:EFJ}.
This motivates the introduction of deformation classes, which collect for each bitangent class the varying shapes that appear within the same combinatorial type.
We provide a classification of deformation classes of tropical bitangents that uniquely depend on the combinatorics of the tropical curve.

\begin{theorem}\label{thm:classification}
	There are $24$ deformation classes of tropical bitangent classes to generic smooth tropical quartic curves modulo $S_3$-symmetry. Orbit representatives of their dual deformation motifs are summarized in Figure \ref{fig:defclasses}.  
\end{theorem}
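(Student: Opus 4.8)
The plan is to promote the combinatorial classification of bitangent shapes of Cueto and Markwig \cite{CueMa20} from single curves to entire combinatorial types, by analyzing how a shape changes as the edge lengths of $\Gamma$ vary within one unimodular triangulation of $4\Delta_2$. First I would fix the relevant objects: for each of the seven bitangent classes guaranteed by \cite{BLMPR16}, its \emph{shape} is the polyhedral region swept out by the tropical lines in the class, and the associated \emph{deformation motif} records the faces of the dual subdivision incident to this region. A fixed combinatorial type corresponds to the relative interior of a maximal cone of the secondary fan of $4\Delta_2$; as the weight vector moves within this cone the edge lengths of $\Gamma$ change continuously, and with them the shape, while the triangulation stays fixed (this is exactly the phenomenon of Example \ref{ex:EFJ}). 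A \emph{deformation class} collects all shapes obtained this way, and the central preliminary step is to show that a shape, and hence its motif, is determined by a bounded neighborhood of the dual subdivision around the lattice points involved in the bitangency, so that the global classification reduces to a finite, purely local problem.

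With locality established, I would enumerate. Using \polymake I would generate the finite list of unimodular triangulations of $4\Delta_2$, and for each one compute the seven bitangent classes from the divisor-theoretic description of \cite{BLMPR16} together with the explicit recipe of \cite{CueMa20}. For each class I would then sweep the edge lengths across the secondary cone, recording every shape that appears and grouping those connected by a continuous family of bitangent tropical lines into a single deformation class together with its dual motif. I would then quotient by the $S_3$-action that permutes the three vertices of $\Delta_2$ and correspondingly acts on triangulations, bitangent classes, and motifs, and select one representative per orbit. Tallying the orbits produces Figure \ref{fig:defclasses} and the claimed count of $24$.

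The main obstacle is completeness, in two guises. First, I must prove that sweeping the interior of each secondary cone really captures \emph{every} shape: no shape may be overlooked that appears only as an edge length degenerates, i.e.\ only on a wall where the motif transitions from one type to another. Second, I must match shapes into deformation classes correctly, placing two shapes together exactly when a continuous family of bitangent tropical lines connects them, and not merely when their motifs look similar. The locality statement from the first step is what makes the finite enumeration rigorous, since it guarantees that the finitely many motifs sampled exhaust all possibilities independently of the ambient triangulation; so the crux of the argument is the proof of locality. Once it is in place, the remaining case analysis is delicate but essentially mechanical and can be completed and organized by computer into the $24$ orbit representatives.
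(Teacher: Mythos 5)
Your overall strategy --- reduce to local data around the tangencies, then enumerate finitely many cases modulo $S_3$ --- is in the spirit of the paper, but two of your load-bearing steps have genuine problems. First, your proposed key lemma is false as stated: a shape is \emph{not} determined by any neighborhood of the dual subdivision, bounded or not, because the subdivision is purely combinatorial data while shapes change with the edge lengths of $\Gamma$; this is exactly Example~\ref{ex:EFJ}, where a single triangulation $\cT$ yields shapes (E), (J) and (F) at different points of $\Sigma(\cT)$. What is local is the dual bitangent motif (Definition~\ref{def:motif}, taken from \cite{CueMa20}); the statement you actually need is the one the paper proves in Lemmas~\ref{lem:constantdefclasses}, \ref{lem:E-F-J}, \ref{lem:G-K-U-T-V} and in the case table of its proof, namely that for each deformation motif the set of shapes attainable as $c$ ranges over the \emph{whole} cone $\Sigma(\cT)$ is determined by the motif alone (via slope and position constraints on the dual edges), independently of the rest of the triangulation. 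Conflating ``the motif is local'' with ``the shape is local'' leaves the crux of the theorem unproved.

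Second, your enumeration step is not an algorithm as described: ``sweep the edge lengths across the secondary cone, recording every shape that appears'' requires checking a continuum of points. To make this finite you need to know that each secondary cone is subdivided by an explicit hyperplane arrangement into chambers on which all seven shapes are constant --- this is the structure described in \cite{2GP21} --- so that one sample point per chamber suffices and wall-crossings account for the transitional shapes; your own two ``completeness'' worries are unresolvable without it. Note also that the paper's proof of Theorem~\ref{thm:classification} is in fact a hand case analysis over the finitely many deformation motifs assembled from the classification of \cite{CueMa20}, with no computer search; the \polymake enumeration over the $1278$ triangulations enters only later, in the proof of Theorem~\ref{thm:Pluecker}. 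So either supply the chamber decomposition to make your computational sweep rigorous, or replace the sweep by the per-motif geometric analysis the paper carries out.
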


The real lifting conditions, i.e., the conditions for admitting a lift to a bitangent  over a real closed field with a non-Archimedean valuation, determined by Cueto and Markwig~\cite{CueMa20}, provide us with local information on the number of lifts of each bitangent shape.

The cones in the secondary fan that induce unimodular triangulations parameterize tropically smooth quartics. As described in \cite{2GP21}, there is a hyperplane arrangement subdividing these cones, such that the quartics in each new chamber have fixed bitangent shapes. Theorem \ref{theorem:lifting} states that the lifting conditions do not depend on this refinement of the secondary fan.
This allows us to work with deformation classes, which are fixed by the combinatorial type of the curve.

Using \polymake \cite{polymake:2000}, we enumerate the deformation classes for every combinatorial type of tropical quartic curves. We then check the real lifting conditions obtaining the expected number of real bitangent lines. We assume the same genericity assumptions as in \cite{ LeMa19, CueMa20} and we discuss them further in Section \ref{sec:prelim}. In particular, they include  the smoothness of the tropicalized curve. 
Note that the count over the reals follows from working over a real closed field due % 
 to Tarski-Seidenberg Transfer Principle \cite[Theorem 1.4.2]{Basu:2006}. Real tropicalization goes back to work of Maslov \cite{Maslov}  and the study of logarithmic limit sets of (real) algebraic varieties, see also \cite{Aless13}.

This paper is organized as follows. In Section~\ref{sec:prelim}, we introduce the main definitions and we report the classification of tropical bitangent classes and their lifting conditions, as introduced in \cite{CueMa20}. We assume that the reader is familiar with basic definitions and results on tropical curves and regular triangulations. We refer to \cite{MS15, DLRS10} for further details.  Deformation classes  are defined and classified in Section~\ref{sec:classification}. In Section~\ref{sec:lifting}, we study their real lifts and prove Theorem \ref{thm:Pluecker}. The proof is based on the enumeration of deformation classes in \polymake.  The technical description of the algorithms and their implementation can be found in \cite{2GP21}. The proofs for the classification are always constructed by providing details of few cases, and then summarizing the main ideas of the remaining ones.  More examples and figures are collected in the Appendix \ref{sec:Appendix} in order to provide further geometric intuition of definitions and proofs. 
\medskip 

\textbf{Acknowledgments.} We are very grateful to Hannah Markwig, Angelica Cueto and Michael Joswig for valuable discussions on the topic and for their comments on earlier versions of this work.
We thank Hannah Markwig and Angelica Cueto for allowing us to include figures from their paper \cite{CueMa20}. We thank Hannah Markwig, Sam Payne and Kris Shaw for telling us about their current project and allowing us to mention it in our paper.  
The first author is funded by a PhD scholarship from the Cusanuswerk e.V..
This work is a contribution to the SFB-TRR 195 'Symbolic Tools in Mathematics and their Application' of the German Research Foundation (DFG).

\section{Tropical quartic curves and their bitangents}\label{sec:prelim}
\noindent
A plane quartic curve $V(f)$  is the zero set of a polynomial of degree four 
%\[
\begin{align}
f(x,y) =  \ & a_{00}   + a_{10} x +a_{01}y + a_{20} x^2 +  a_{11}xy + a_{02}y^2 + a_{30} x^3 + a_{21}x^2y  \\  \nonumber 
&+a_{12}xy^2 + a_{03}y^3 + a_{40} x^4+ a_{31}x^3y + a_{22}x^2y^2 + a_{13}xy^3 + a_{04} y^4. 
\end{align}\label{eq:polynomial}
%\]
We consider the tropicalization of curves defined over a  real closed complete non-archimedean valued field $\KK_{\RR}$ and its algebraic closure $\KK$.  The main examples are the fields of Puiseux series $\RRt$ and $\CCt$. For the tropicalization of curves, we use the \emph{max}-convention. We write $\lambda_{ij}$ for the valuations of the coefficients of the polynomial, i.e., $\lambda_{ij} = \text{val}(a_{ij})$.  Then $\Trop(V(f))$ is the tropical curve define by the tropical polynomial with coefficients $-\lambda_{ij}$.

We assume that the Newton polygon of $f$ is the fourth dilation  of the standard $2$-dimensional simplex $4 \Delta_2$. The polygon $4\Delta_2$ contains $15$ lattice points $p_{ij}$ corresponding to the monomials $x^i\, y^j$ of $f$. By duality, the combinatorics of the tropical curve $\Gamma = \Trop(V(f))$ is determined by the subdivision $\cT$ (of the lattice points) of $4\Delta_2$ induced by the coefficients $-\lambda_{ij}$. We use the notation $\cdot^{\vee}$ to refer to the dual of a vertex or an edge of $\Gamma$ in $\cT$, and viceversa. We only consider smooth tropical plane quartic curves, so the subdivisions of $4\Delta_2$ are unimodular triangulations. In particular, all lattice points in $4\Delta_2$ are vertices in the triangulation $\cT$.  The set of points in $\RR^{15}$ inducing the same subdivision is a relative open cone called the \emph{secondary cone} and denoted $\Sigma(\cT)$. For a point $c \in \Sigma(\cT)$, we use the notation $\Gamma_{c}$ to indicate the tropical quartic curve defined by the tropical polynomial with coefficients given by the coordinates of $c$.

Regular unimodular triangulations of $4 \Delta_2$ have been enumerated by Brodsky et al. in~\cite{BJMS15}. They counted $1278$ orbits of combinatorial types under the action of the symmetric group $S_3$. This group acts on the homogenization of the lattice points of $4\Delta_2$ and on the corresponding monomials of the polynomial $f$ and its tropicalization Trop$(f)$. 

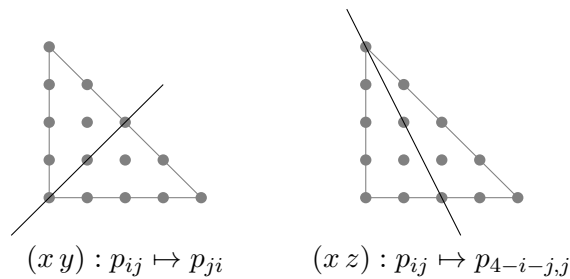
\begin{figure}[h]
	\begin{tikzpicture}[scale=0.5]
		\simplex
		\draw[thin, black] (-1,-1) -- (3,3);
		\node [below] at (2,-1) {$(x\,y):p_{ij} \mapsto p_{ji}$};
	\end{tikzpicture}
	\quad \quad
	\begin{tikzpicture}[scale=0.5]
		\simplex
		\draw[thin, black] (-0.5,5) -- (2.5,-1);
		\node [below] at (2,-1) {$(x\,z):p_{ij} \mapsto p_{4-i-j,j}$};
	\end{tikzpicture}
	\caption{Actions of generators of $S_3$ on $4\Delta_2$.}
\end{figure}
A tropical line $\Lambda$ is \emph{bitangent} to a smooth tropical plane quartic curve $\Gamma$ if their intersection $\Lambda \cap \Gamma$ has two components with stable multiplicity $2$, or one component with stable multiplicity $4$. See  \cite[Section 3.6]{MS15} for an introduction to stable intersections. We always assume that tropical bitangent lines are non-degenerate, i.e., each tropical line consists of a vertex and three adjacent rays with directions $-e_1$, $-e_2$ and $e_1+e_2$ given by the standard basis of $\RR^2$. For an impression of tropical bitangent lines, see Figure \ref{fig:EFJshapechange}.

A tropical quartic curve $\Gamma$ has exactly $7$ or infinitely many tropical bitangents. The collection of bitangents can be grouped into $7$ equivalence classes modulo continuous translations that preserve bitangency as shown in \cite{BLMPR16}.  Formally, the \emph{tropical bitangent classes} of a tropical quartic $\Gamma$ are defined as the connected components of $\RR^2$ containing the vertices of tropical bitangents in the same equivalent class. Up to $S_3$-symmetries, they refine into $41$ \emph{shapes of tropical bitagent classes}  given by coloring the points in the class belonging to $\Gamma$, see Figure \ref{fig:Fig6}. Since a non-degenerate tropical line is determined by its vertex, the bitangent classes formally live in the dual plane. They are connected polyhedral complexes, which are also min-tropical convex sets \cite[Theorem 1.1, Corollary 3.3]{CueMa20}. To improve the visualization, we draw the curve and its bitangent classes on the same plane. 

\begin{figure}[ht] 
	\includegraphics[width=10cm]{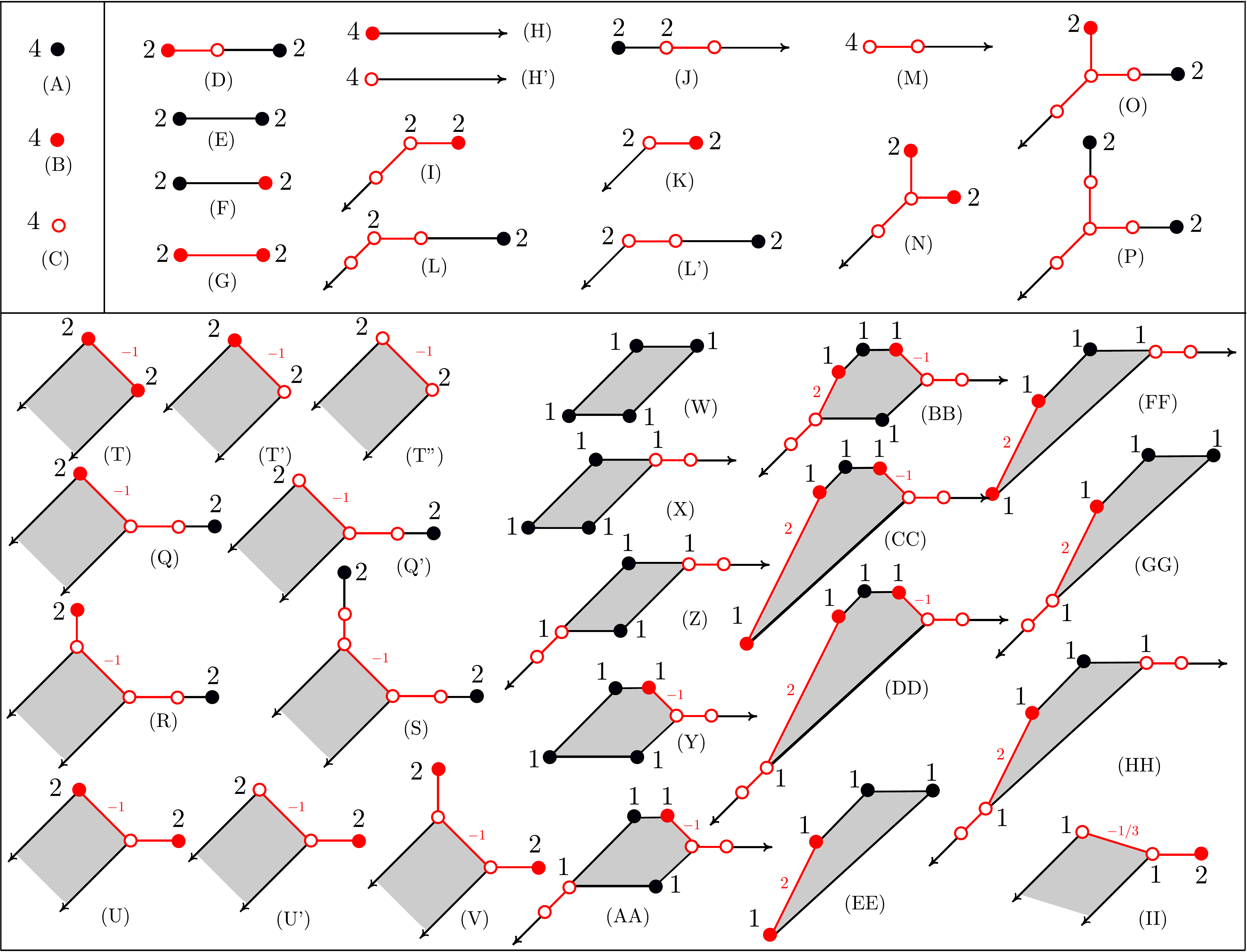}
	\caption{ Shapes of bitangent classes on smooth quartics. The black numbers above the vertices indicate the lifting representatives in each class and their lifting multiplicities. Red vertices or line segments are contained in the quartic curve, a red vertex filled with white coincides with a vertex of the quartic curve. Figure taken from \cite[Figure 6]{CueMa20}.}\label{fig:Fig6}
\end{figure}

As remarked by Cueto and Markwig \cite{CueMa20}, the shapes of tropical bitangent classes of a smooth tropical quartic curve $\Gamma$  impose combinatorial constraints on the regular unimodular triangulation of $4\, \Delta_2$ dual to $\Gamma$.  More precisely, the existence of a representative of a certain shape determines specific subcomplexes that must be contained in the triangulation, see Figure \ref{fig:Fig8}. Such a subcomplex is only a necessary condition for the presence of its corresponding tropical bitangent shape. Given a smooth tropical quartic curve $\Gamma$, the shapes of its bitangent classes are not fully determined by the combinatorial type of $\Gamma$, but they also depend on the length of the edges, as the following example illustrates.  This motivates us to introduce  deformation classes of tropical bitangents in the following section.

\begin{example} \label{ex:EFJ}
	We consider the quartic curve dual to the triangulation $\cT$ in Figure \ref{fig:exampletriangulation}. The colored subcomplex of $\cT$ corresponds to shapes (E), (F) and (J) in the classification in Figure \ref{fig:Fig6}. 		
	Let $\mathbf{a}$ denote the coefficient vector of an algebraic curve of degree $4$ with entries ordered  as in \eqref{eq:polynomial}.
	\begin{figure}[h]
		\centering
		\begin{subfigure}[b]{0.125\textwidth}
			\includegraphics[width=1\textwidth]{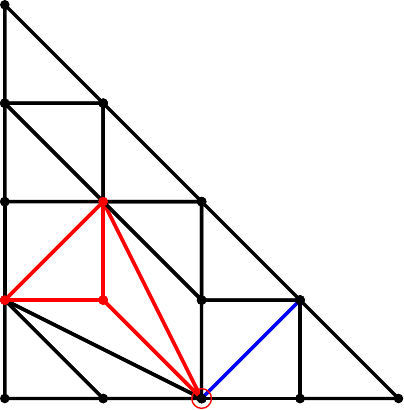}
			\caption{Triangulation $\cT$}\label{fig:exampletriangulation}
		\end{subfigure}
		\begin{subfigure}[b]{0.28\textwidth}
			\includegraphics[width=1\textwidth]{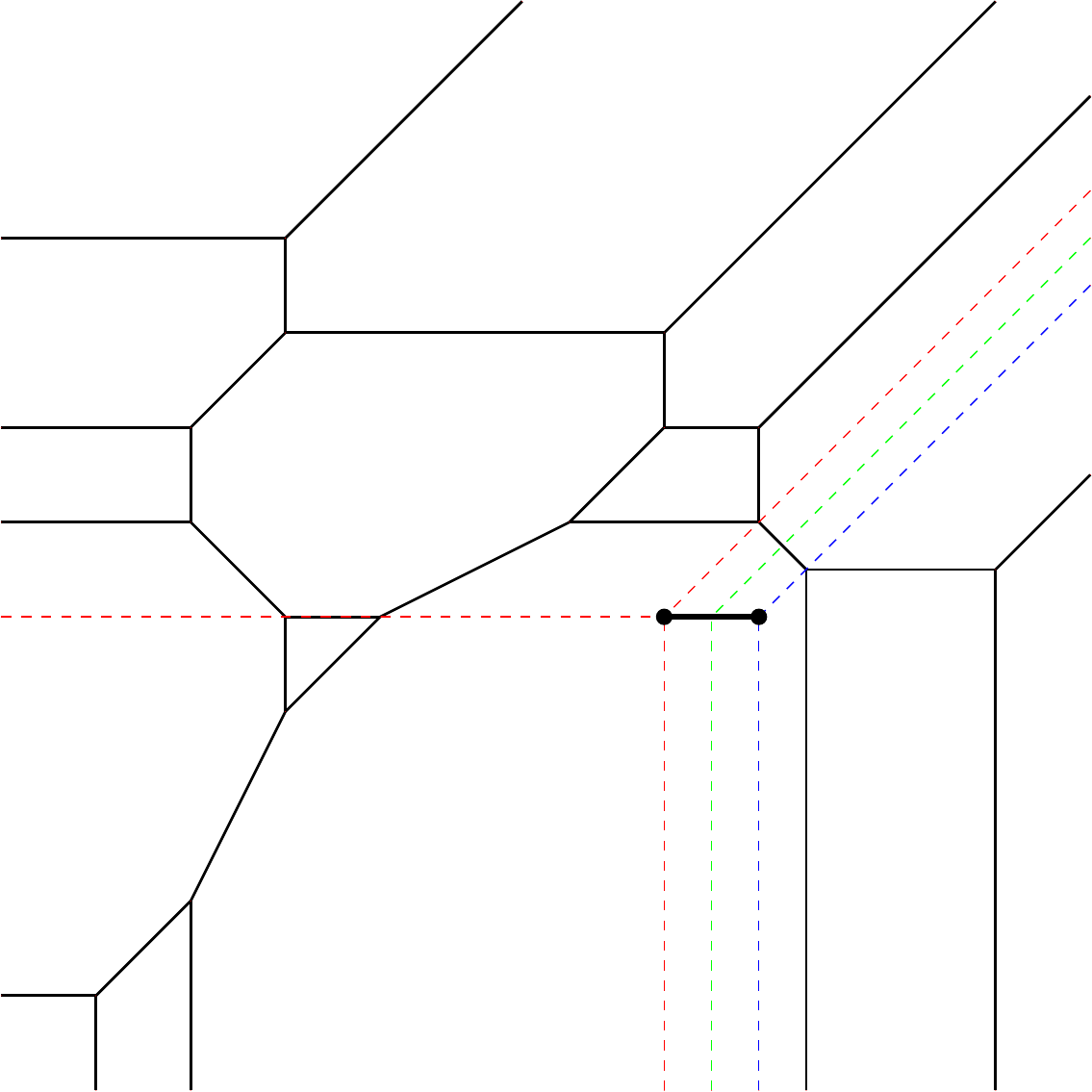}
			\caption{Shape (E)}\label{fig:shapeE}
		\end{subfigure}
		\begin{subfigure}[b]{0.28\textwidth}
			\includegraphics[width=1\textwidth]{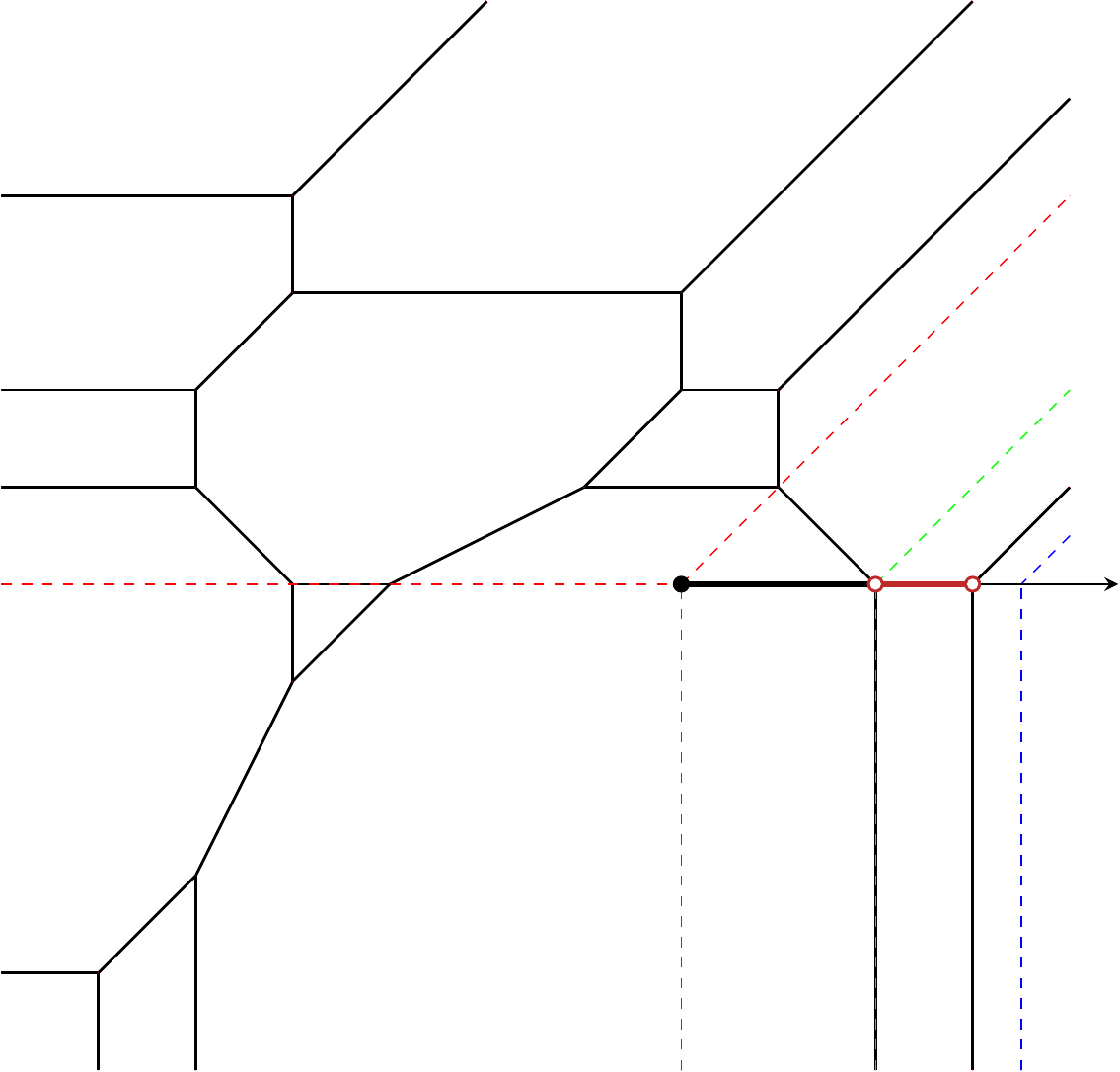}
			\caption{Shape (J)}\label{fig:shapeJ}
		\end{subfigure}
		\begin{subfigure}[b]{0.28\textwidth}
			\includegraphics[width=1\textwidth]{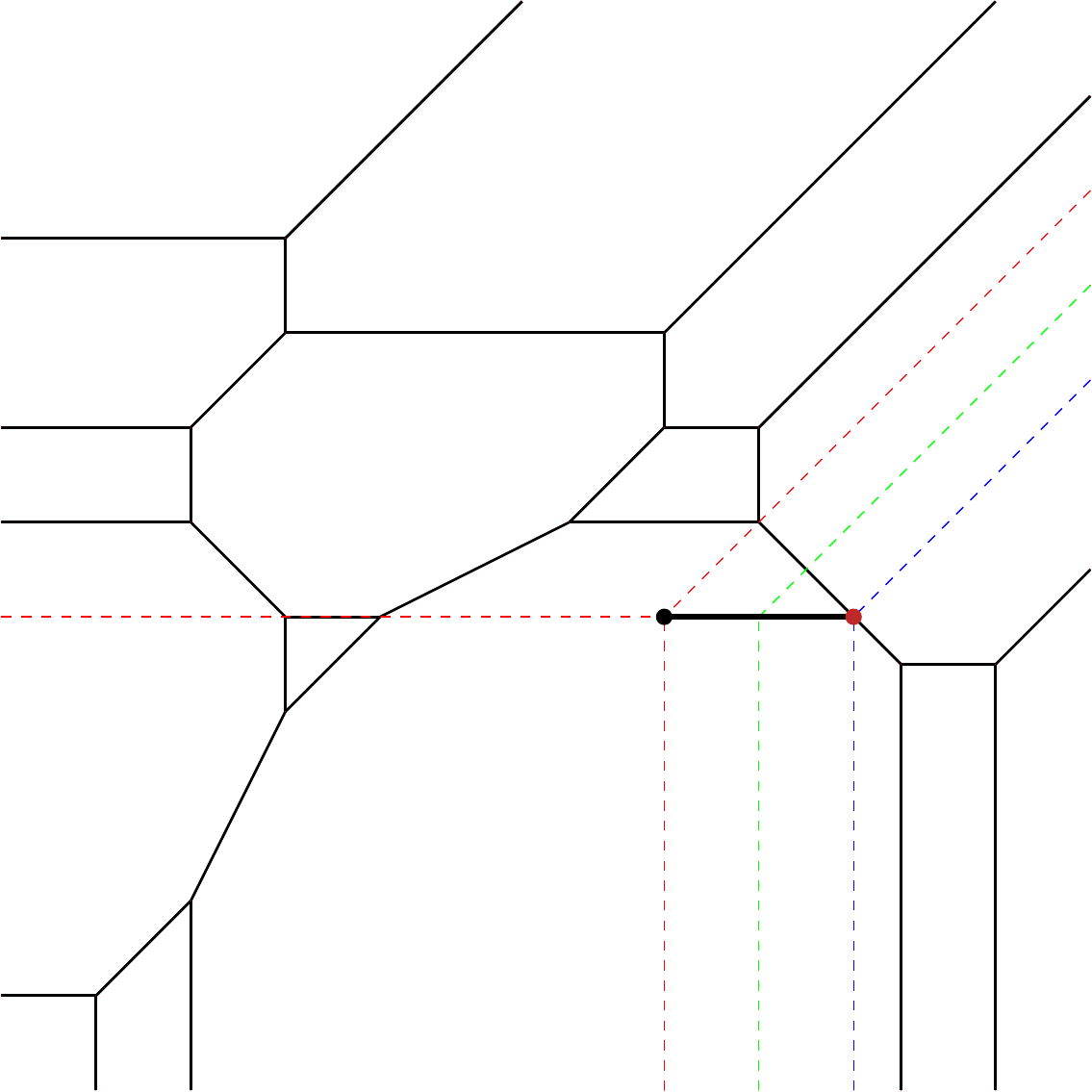}
			\caption{Shape (F)}\label{fig:shapeF}
		\end{subfigure}
		
		\caption{The dual triangulation does not fix the shapes of the bitangent classes, since they can change when choosing different edge lengths for the curve.}\label{fig:EFJshapechange}
	\end{figure}

We remark that there exists an equivalence class of bitangents of $\Gamma$ which adopts different shapes for different choices of $\lambda = \text{val}(\mathbf{a}) \in \Sigma(\cT)$.
 We observe shape (E) when choosing $\lambda_1 =(0,5,5,9,8,5,6.5,9,9,4,2,7,8,7,1)$, see Figure \ref{fig:shapeE}.
 For $\lambda_2 =(0,5,5,9,8,5,6,9,9,4,2,7,8,7,1)$ we obtain shape (J) as in Figure \ref{fig:shapeJ}.
 Figure~\ref{fig:shapeF} shows shape (F), which appears for $\lambda_3 =(0,5,5,9,8,5,5.5,9,9,4,1,7,8,7,1)$.

\end{example}

Let $\Gamma$ be a smooth tropical quartic curve and $V(f)$ a  smooth plane quartic curve defined over $\KK$ such that $\text{Trop}(V(f)) = \Gamma$. Following \cite[Definition 2.8]{LeMa19},  we say that a tropical bitangent $\Lambda$ with tangency points $P$ and $P'$ lifts over $\KK$ if there exists a bitangent $\ell$ to $V(f)$ defined over $\KK$ with tangency points $p$ and $p'$ such that 
\[
\text{Trop}(V(f)) = \Gamma, \ \ \text{Trop}(\ell) = \Lambda, \ \ \text{Trop}(p) = P, \ \ \text{and} \ \ \text{Trop}(p')=P'.
\] 

The number of such $\ell$ is the \emph{lifting multiplicity} of $\Lambda$. In an equivalence class, the number of tropical bitangents which lift and their lifting multiplicities can be one, two or four, see \cite[Theorem 4.1]{LeMa19}. 

Similarly, we are  is interested in the number of \emph{real} lifts, that is, the lifting multiplicity when $V(f)$ and $\ell$ are defined over $\KK_{\RR}$. In this case, as shown by Cueto and Markwig \cite{CueMa20}, every bitangent class of a given shape has either zero or four lifts to real bitangents. Moreover, the existence of a lift uniquely depends on the signs $s_{ij}$ of the coefficients $a_{ij}$ of the polynomial $f$. We report in Table \ref{tab:tab11} the summary of these conditions. 

Lifting problems were studied under the genericity constraints explained in \cite[Remark~2.10]{CueMa20}, which also apply in our results next to the  assumptions that the tropical curve $\Gamma$ is smooth and the tropical bitangent lines are non-degenerate. 
The assumption that if the tropical curve $\Gamma$ contains a vertex adjacent to three bounded edges with directions $-e_1$, $-e_2$ and $e_1+ e_2$, the shortest of these edges is unique will be particularly relevant in our analysis of the lifting conditions of shape (C).

\begin{center}
	\begin{tabular}{c|c}
	Shape & Lifting conditions \\
	\hline
	(A) & $(-s_{1v}s_{1,v+1})^i s_{0i}s_{22}>0$ and $(-s_{u1}s_{u+1,1})^js_{j0}s_{22}>0$ \\
	\hline
	(B) & $(-s_{1v}s_{1,v+1})^{i+1} s_{0i}s_{21}>0$ and $(-s_{21})^{j+1}s_{31}^js_{1v}s_{1,v+1}s_{j0}>0$ \\
	\hline
	(C) & $(-s_{11}s_{12})^is_{0i}s_{20}>0$ and $(-s_{21}s_{12})^k s_{k,4-k}s_{20}>0$ if j=2\\
	 & $(\!-s_{11}\!)^{i+1}s_{12}^is_{21}s_{0i}s_{j0}\!>\!0$ and $(\!-s_{21}\!)^{k+1}s_{12}^ks_{11} s_{k,4-k}s_{j0}\!>\!0$ if j=1,3\\
 	\hline
	(H),(H') &  $(-s_{1v}s_{1,v+1})^{i+1} s_{0i}s_{21}>0$ and $-s_{21}s_{1v}s_{1,v+1}s_{40}>0$ \\
	\hline
	(M) & $(-s_{1v}s_{1,v+1})^{i+1} s_{0i}s_{21}>0$ and $s_{31}s_{1v}s_{1,v+1}s_{30}>0$\\
	\hline
	(D) & $(-s_{10}s_{11})^is_{0i}s_{22}>0$\\
	\hline
	(E),(F),(J) & $(-s_{1v}s_{1,v+1})^{i+1} s_{0i}s_{20}>0$\\
	\hline
	(G) & $(-s_{10}s_{11})^is_{0i}s_{k,4-k}>0$\\
	\hline
	(I),(N) &$-s_{10}s_{11}s_{01}s_{k,4-k}>0$ \\
	\hline
	\shortstack{(K),(T),(T'),(T''), \\(U),(U'),(V)}& \raisebox{0.2cm}{$s_{00}s_{k,4-k}>0$} \\
	\hline
	(L),(O),(P) & $-s_{10}s_{11}s_{01}s_{22}>0$ \\
	\hline
	\shortstack{(L'),(Q),(Q'), \\ (R),(S) } & \raisebox{0.2cm}{ $s_{00}s_{22}>0$}\\
	\hline
	rest & no conditions
	\end{tabular}
	\captionof{table}{The real lifting conditions of the bitangent shapes in their identity positions as determined in \cite[Table 11]{CueMa20}.}\label{tab:tab11}

\end{center}

We conclude this section by fixing some notation and conventions. We follow the labeling of shapes of  bitangent classes  and the color patterns introduced in \cite{CueMa20} by Cueto and Marking,
 see Figures \ref{fig:Fig6} and \ref{fig:Fig8}.  The group $S_3$ acts  on bitangent shapes and their dual subcomplexes. We refer to the bitangent shapes and subcomplexes in Figures \ref{fig:Fig6} and \ref{fig:Fig8} as in  \textit{identity position}. 
We indicate the different elements in the orbit of a bitangent shape not in identity position by adding the element of $S_3$ acting on it to the index of the shape. For example, the notation (B)$_{(xy)}$ means that the bitangent class has shape (B) with dual complex  given by the action of $(x\,y)$ on  the complex in identity position.

\begin{figure}[h]
	\centering
	\includegraphics[width=10cm]{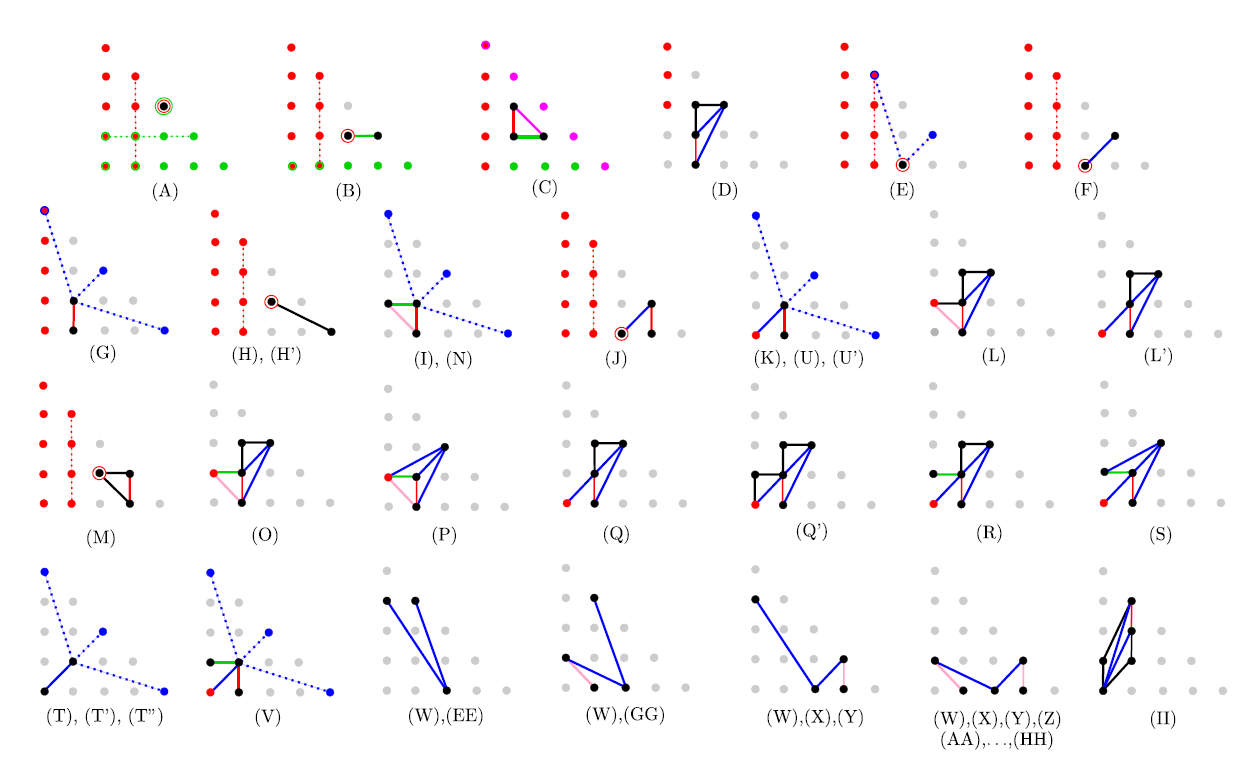}
	\caption{ Dual bitangent motifs of bitangent classes. The color coding is explained in \cite[Remark 4.13]{CueMa20}. Figure taken from \cite[Figure 8]{CueMa20}.}\label{fig:Fig8}
\end{figure}

\section{Deformation classes of tropical bitangents} \label{sec:classification}
\noindent
In this section we introduce deformation classes of tropical bitangents and we classify them. Our terminology is inspired by the one introduced in \cite{PaVi19, JPS21} for the classification of the combinatorial positions of tropical lines on cubic surfaces. 
\begin{definition}\label{def:motif}
Let $\Gamma$ be a tropical smooth quartic curve with dual triangulation $\cT$, and let $\BShape$ be a bitangent class of $\Gamma$ of a fixed shape. The \emph{dual bitangent motif} of $(\Gamma, \BShape)$ is the subcomplex of $\cT$ that is fully determined by the shape of $\BShape$.  Dual bitangent motifs  are classified in  \cite[Figure 8]{CueMa20}.
\end{definition}

In pictures of dual bitangent motifs, we use the same color coding as Cueto and Markwig \cite{CueMa20}. Black and colored solid edges must be part of the triangulations, while dotted ones represent possible edges, of which one must occur. Black vertices are always present, while the colored ones are endpoints of colored possible edges or they form a triangle with an edge of the same color. Different colors correspond to different types of tangencies, see \cite[Remark 4.13]{CueMa20}.

The summary of the dual bitangent motifs as shown in Figure \ref{fig:Fig8} is very condensed, and, as a consequence, for some figures with dotted edges not all combinations lead to the assigned bitangent shape. For example, when choosing the red and green edges $\overline{p_{10}p_{11}}$ and $\overline{p_{01}p_{11}}$ in the picture of (A), the shape, which will occur, will be either (S) or (P), but never (A). Therefore, the dual bitangent motifs will be subdivided more in the following classification of the deformation classes.

\begin{definition}\label{def:defclass}
Given a tropical quartic $\Gamma_{c}$ with dual triangulation $\cT$, $c\in\Sigma(\cT)$, and a tropical bitangent class $B$, we say that a tropical bitangent class $\BShape'$ is in the same \emph{deformation class} as $\BShape$ if the following conditions are satisfied:
\begin{itemize}
\item[$\triangleright$] There exists $\Gamma_{c'}$ with $c' \in \Sigma(\cT)$ having $B'$ as one of its bitangent classes.
\item[$\triangleright$] There is a continuous deformation from $\Gamma_{c}$ to $\Gamma_{c'}$ given by a path in the secondary cone $\Sigma(\cT)$ from $c$ to $c'$ that induces $\BShape$ to change to $\BShape'$.
\end{itemize} 
We use the notation $B_{\omega}$ to indicate the deformation of $B$ in $\Gamma_{\omega}$ for ${\omega}$ in the path. 
Given a unimodular triangulation $\cT$ of $4\Delta_2$ and a dual quartic curve $\Gamma$, let $\mathcal{D}$ be the deformation class of one of its seven bitangent classes. The \emph{dual deformation motif of $(\cT,\mathcal{D})$} is the union of the dual bitangent motifs of all shapes belonging to bitangent classes in  $\mathcal{D}$.
\end{definition}

We label deformation classes using the letters of the shapes of tropical bitangents. In Example \ref{ex:EFJ} we saw a deformation class (E F J). If the class contains the image of shapes under the action of an element $\sigma \in S_3$, we use the notation $+\sigma$. 

\begin{rem} \label{rem:dualmotifs}
Each smooth tropical quartic $\Gamma$ has $7$ deformation classes, and it follows from the definition that they only depend on the dual triangulation $\cT$ of $\Gamma$.  Changing the coefficients defining $\Gamma$ in the secondary cone $\Sigma(\cT)$ induces a variation in the shapes of the tropical bitangents within the deformation class.
\end{rem}

\begin{figure}[h]
	\centering
\begin{subfigure}{.21\textwidth}
	\centering
	\begin{subfigure}{.45\textwidth}
		\centering
		\includegraphics[width=0.9\textwidth]{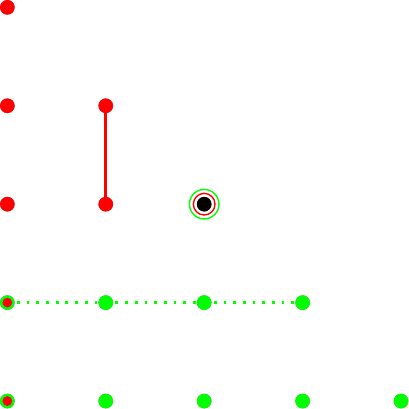}
	\end{subfigure}	\begin{subfigure}{.45\textwidth}
		\centering
		\includegraphics[width=0.9\textwidth]{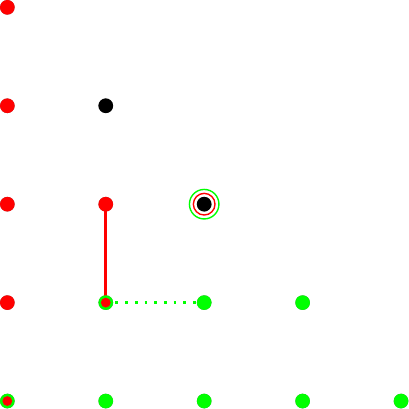}
	\end{subfigure}
	\caption*{(A)}
\end{subfigure}
\begin{subfigure}{.10\textwidth}
	\centering
	\includegraphics[width=0.9\textwidth]{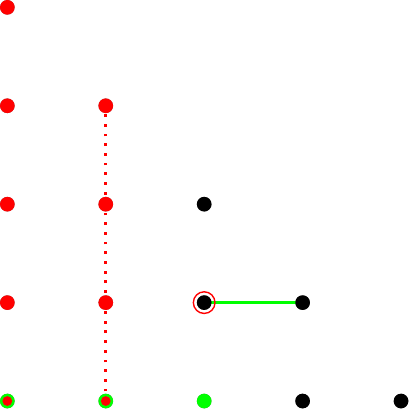}
	\caption*{(B)}
\end{subfigure}
\begin{subfigure}{.10\textwidth}
	\centering
	\includegraphics[width=0.9\textwidth]{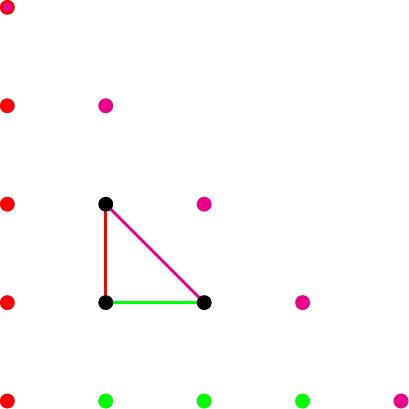}
	\caption*{(C)}
\end{subfigure}
\begin{subfigure}{.10\textwidth}
	\centering
	\includegraphics[width=0.9\textwidth]{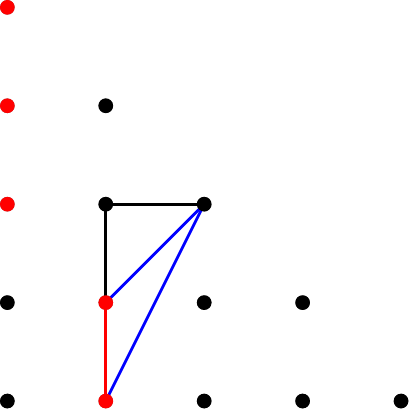}
	\caption*{(D)}
\end{subfigure}
\begin{subfigure}{.10\textwidth}
	\centering
	\includegraphics[width=0.9\textwidth]{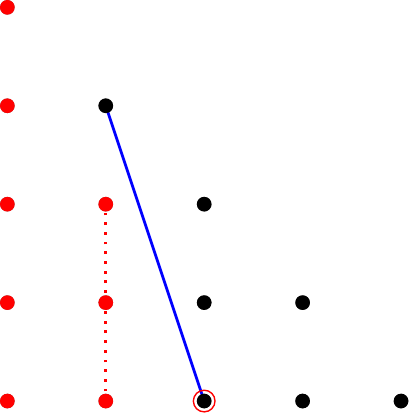}
	\caption*{(E)}\label{fig:(E)}
\end{subfigure}
\begin{subfigure}{.33\textwidth}
	\centering
	\begin{subfigure}{.3\textwidth}
		\centering
		\includegraphics[width=0.9\textwidth]{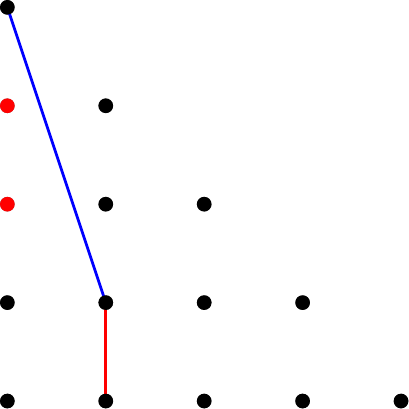}
	\end{subfigure}
	\begin{subfigure}{.3\textwidth}
		\centering
		\includegraphics[width=0.9\textwidth]{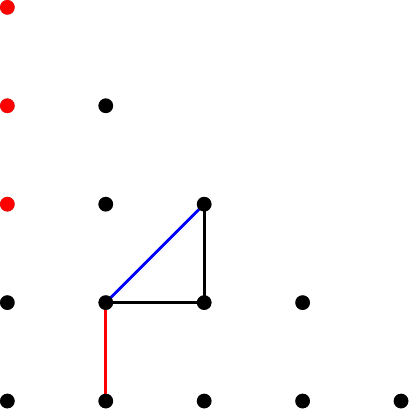}
	\end{subfigure}
	\begin{subfigure}{.3\textwidth}
		\centering
		\includegraphics[width=0.9\textwidth]{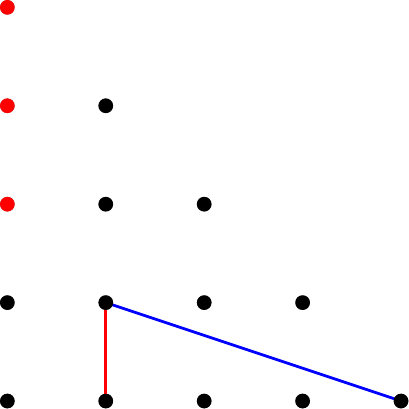}
	\end{subfigure}	
	\caption*{(G)}
\end{subfigure}
\begin{subfigure}{.10\textwidth}
	\centering
	\includegraphics[width=0.9\textwidth]{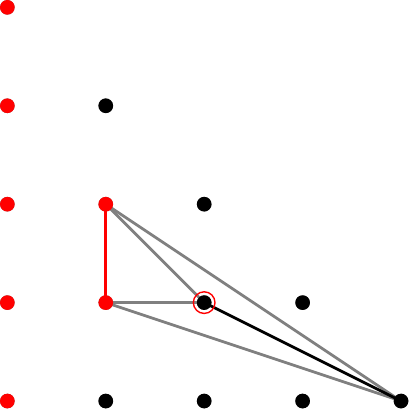}
	\caption*{(H)}
\end{subfigure}
\begin{subfigure}{.10\textwidth}
	\centering
	\includegraphics[width=0.9\textwidth]{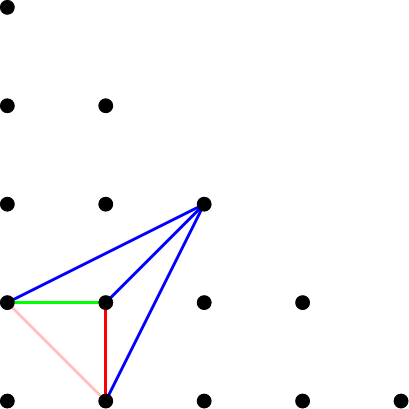}
	\caption*{(P)}
\end{subfigure}
\begin{subfigure}{.10\textwidth}
	\centering
	\includegraphics[width=0.9\textwidth]{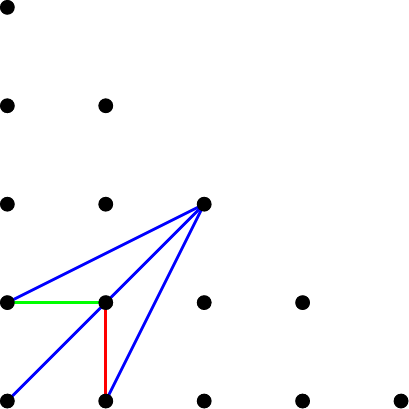}
	\caption*{(S)}
\end{subfigure}
\begin{subfigure}{.10\textwidth}
	\centering
	\includegraphics[width=0.9\textwidth]{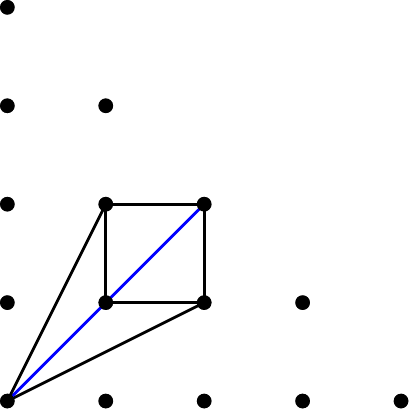}
	\caption*{(T)}
\end{subfigure}
\begin{subfigure}{.10\textwidth}
	\centering
	\includegraphics[width=0.9\textwidth]{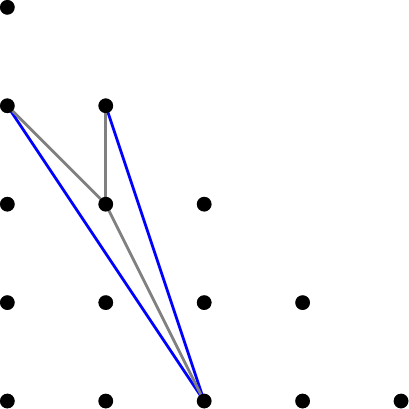}
	\caption*{(W)}
\end{subfigure}
\begin{subfigure}{.10\textwidth}
	\centering
	\includegraphics[width=0.9\textwidth]{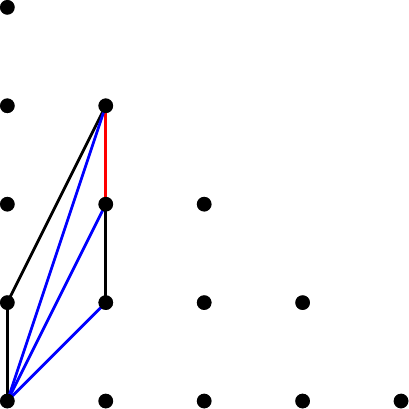}
	\caption*{(II)}
\end{subfigure}
\caption{Dual bitangent motifs of bitangent classes with constant shape in their deformation class.}\label{fig:subdivnochange}
\end{figure}

\begin{lemma}\label{lem:constantdefclasses}
	Let $\Gamma$ be a tropical smooth quartic  curve dual to a triangulation $\mathcal{T}$ of $4\Delta_2$. Let $\BShape$ be a bitangent class of $\Gamma$ with dual bitangent motif belonging to the collection  in Figure \ref{fig:subdivnochange}, modulo  $S_3$-symmetry. Then the shape of the bitangent classes is constant in the deformation class of $\BShape$.
	 \end{lemma}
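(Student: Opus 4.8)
The plan is to show that for each dual bitangent motif listed in Figure \ref{fig:subdivnochange}, the combinatorial data of the triangulation $\cT$ rigidly forces a unique bitangent shape, leaving no room for a continuous deformation within $\Sigma(\cT)$ to alter it. Recall from Definition \ref{def:defclass} that two shapes lie in the same deformation class precisely when a path in the secondary cone $\Sigma(\cT)$ carries one to the other; so the assertion is equivalent to saying that every $c \in \Sigma(\cT)$ produces the same shape for the bitangent class $\BShape$. First I would recall that the dual bitangent motif is the subcomplex of $\cT$ fully determined by the shape (Definition \ref{def:motif}), and conversely observe that for the motifs in Figure \ref{fig:subdivnochange} this subcomplex is \emph{complete}, in the sense that it contains no dotted (optional) edges that could be resolved in more than one way. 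In the condensed classification of Figure \ref{fig:Fig8}, the ambiguity that allows a shape to change comes exactly from the dotted edges, where one of several possibilities must be realized; the motifs gathered in Figure \ref{fig:subdivnochange} are those whose realization is forced by the solid edges and black vertices alone.

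The key steps, carried out motif by motif, run as follows. For a representative case I would fix the dual subcomplex, read off which lattice edges of $\cT$ are mandatory, and then identify the bitangent shape they pin down via the correspondence of \cite{CueMa20}. The essential point is that the tangency structure encoded by the colored edges and triangles determines the local geometry of the bitangent line near the curve $\Gamma_c$: the directions and relative positions of the rays and the vertex of the non-degenerate tropical line are dictated by the dual edges, and changing $c$ within $\Sigma(\cT)$ only rescales edge lengths of $\Gamma_c$ without altering which edges and triangles of $\cT$ are present. Since the motif has no optional edges, varying the edge lengths cannot create or destroy the tangency configuration that defines the shape; hence the shape is invariant along any path in $\Sigma(\cT)$. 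I would present shapes (D) and (T) (or another pair) in full detail—exhibiting the forced subcomplex and the resulting unique tangency pattern—and then remark that the remaining motifs in Figure \ref{fig:subdivnochange} follow by the same argument, as is the stated convention for proofs of the classification in this paper.

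I expect the main obstacle to be making precise the claim that \emph{no deformation within $\Sigma(\cT)$ changes the shape} without simply asserting it case by case. Concretely, one must rule out that a continuous change of edge lengths could push the vertex of the tropical line across a cell boundary in the dual plane and thereby switch the shape; this requires checking that the position of the line's vertex relative to $\Gamma_c$, as a function of $c \in \Sigma(\cT)$, stays within a single combinatorial region for every motif on the list. The cleanest way to handle this uniformly is to contrast with the motifs \emph{not} on the list, such as (A), where the dotted red and green edges $\overline{p_{10}p_{11}}$ and $\overline{p_{01}p_{11}}$ genuinely admit distinct resolutions producing different shapes (as noted after Definition \ref{def:motif}); the motifs in Figure \ref{fig:subdivnochange} are exactly those lacking such a resolvable ambiguity, so the vertex cannot cross into a competing configuration. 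I would verify this dichotomy explicitly for the detailed cases and invoke it for the rest, which keeps the argument both rigorous and in line with the paper's stated proof strategy of treating a few cases and summarizing the remainder.
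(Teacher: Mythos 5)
There is a genuine gap, and it stems from a misunderstanding of what varies along a deformation. By Definition \ref{def:defclass}, a deformation takes place along a path inside a \emph{single} secondary cone $\Sigma(\cT)$: the triangulation $\cT$, and hence the resolution of every ``dotted'' edge, is fixed once and for all; only the edge lengths of $\Gamma_c$ change. Example \ref{ex:EFJ} and Lemma \ref{lem:E-F-J} show that the shape can nevertheless jump between (E), (F) and (J) for one and the same triangulation, purely through length changes. Consequently your central criterion --- that the motifs in Figure \ref{fig:subdivnochange} are exactly the ``complete'' ones without optional edges, and that completeness is what forbids deformation --- is both factually wrong and logically insufficient. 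Factually: several motifs in Figure \ref{fig:subdivnochange} do contain optional data (the motif for (E) allows the second tangency on the edge dual to $\overline{p_{10}p_{11}}$ or to $\overline{p_{11}p_{12}}$, and the classes (A) and (G) are depicted by two, respectively three, condensed pictures); moreover (A) \emph{is} on the list, so your proposed contrast with (A) as an off-list deformable motif collapses. Logically: even if a motif had no optional edges, that alone would not prevent a length change from moving the vertex of the bitangent line across a cell boundary, which is exactly the phenomenon in the (E\,F\,J) class.

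The missing content is the geometric, case-by-case verification that the paper actually carries out: for each motif one must show that, for \emph{every} choice of lengths, the two tangency components keep the same intersection type, because the slopes and forced relative positions of the relevant dual edges of $\Gamma_c$ make the competing configurations unreachable. For instance, for shape (E) the class $B$ is a segment whose endpoints are cut out by rays of direction $-(e_1+e_2)$ through the vertices of $(\overline{p_{20}p_{13}})^{\vee}$ and a ray of direction $e_1$ through the non-transversal tangency, and the slopes of the edges of $\Gamma$ joining $(\overline{p_{10}p_{11}p_{20}})^{\vee}$ to $(\overline{p_{12}p_{13}p_{20}})^{\vee}$ prevent these endpoints from ever lying on $\Gamma$, independently of the lengths. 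Your proposal acknowledges this obstacle (``one must rule out that a continuous change of edge lengths could push the vertex \dots across a cell boundary'') but then discharges it by appeal to the false dichotomy rather than by doing the slope analysis, so the proof does not go through as written.
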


\begin{proof}
	The proof works similarly for each of the cases. The main argument for each case is that due to the combinatorial structure of $\mathcal{T}$, the two tangencies cannot change the type of their intersection, and this fully determines the bitangent class and its shape. We explain the details for shape (E) and summarize the remaining cases in Table \ref{tab:constdefclasses}.

\begin{figure}[h]
		\centering
			\subfloat{\includegraphics[width=0.15\textwidth]{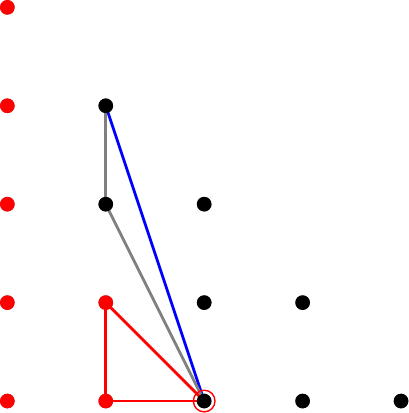}}		 \qquad
			\subfloat{\includegraphics[width=0.3\textwidth]{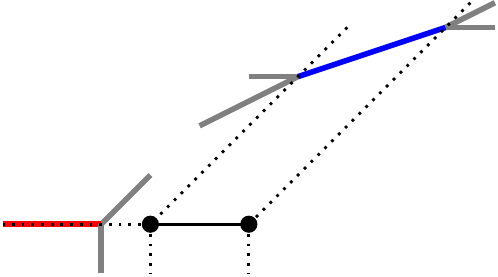}}
		
		\caption{Example of a dual bitangent motif with constant shape (E).}\label{fig:exE}
	\end{figure}

	Let $B$ be a bitangent class of shape (E). 
	We may assume that the dual bitangent motif looks as in Figure \ref{fig:exE}. One tangency point is a transversal intersection in the edge  of $\Gamma$ dual to $E= \overline{p_{20}p_{13}}$. The second tangency point  is a non-transversal intersection of the horizontal ray of $\Lambda$ with the bounded edge of $\Gamma$ dual to  $\overline{p_{10}p_{11}}$ or $\overline{p_{11}p_{12}}$. We can exclude the  edge $\overline{p_{12}p_{13}}$ since it does not define a dual bitangent motif of shape (E). 
	
	The bitangent class  $B$ is a line segment. Its two endpoints are determined by intersecting rays with direction $-(e_1+e_2)$ through the vertices of the edge $E^{\vee}$ of $\Gamma$  with a ray with direction $e_1$ passing through the non-transversal intersection. See Figure \ref{fig:exE} for an example. Independently on the lengths of the edges of $\Gamma$, the endpoints of $B$ cannot lie in $\Gamma$ because of the slopes of the edges of $\Gamma$ that connect the vertices $(\overline{p_{10}p_{11}p_{20}})^\vee$ and $(\overline{p_{12}p_{13}p_{20}})^\vee$.

	Table \ref{tab:constdefclasses} contains the remaining  bitangent classes from the statement. For each deformation class, we draw the dual deformation motif in the dual triangulation and the relevant part of the tropical quartic curve. When there are several possible dual bitangent motifs condensed in a picture, we draw one tropical curve dual to only one of them. From the combinatorial shape of the tropical curve, we can see that changes of edge lengths cannot induce a deformation of the shape of the bitangent class.  
 %\begin{landscape}
 \begin{table}[h] 
 \includegraphics[angle=90,scale=.7]{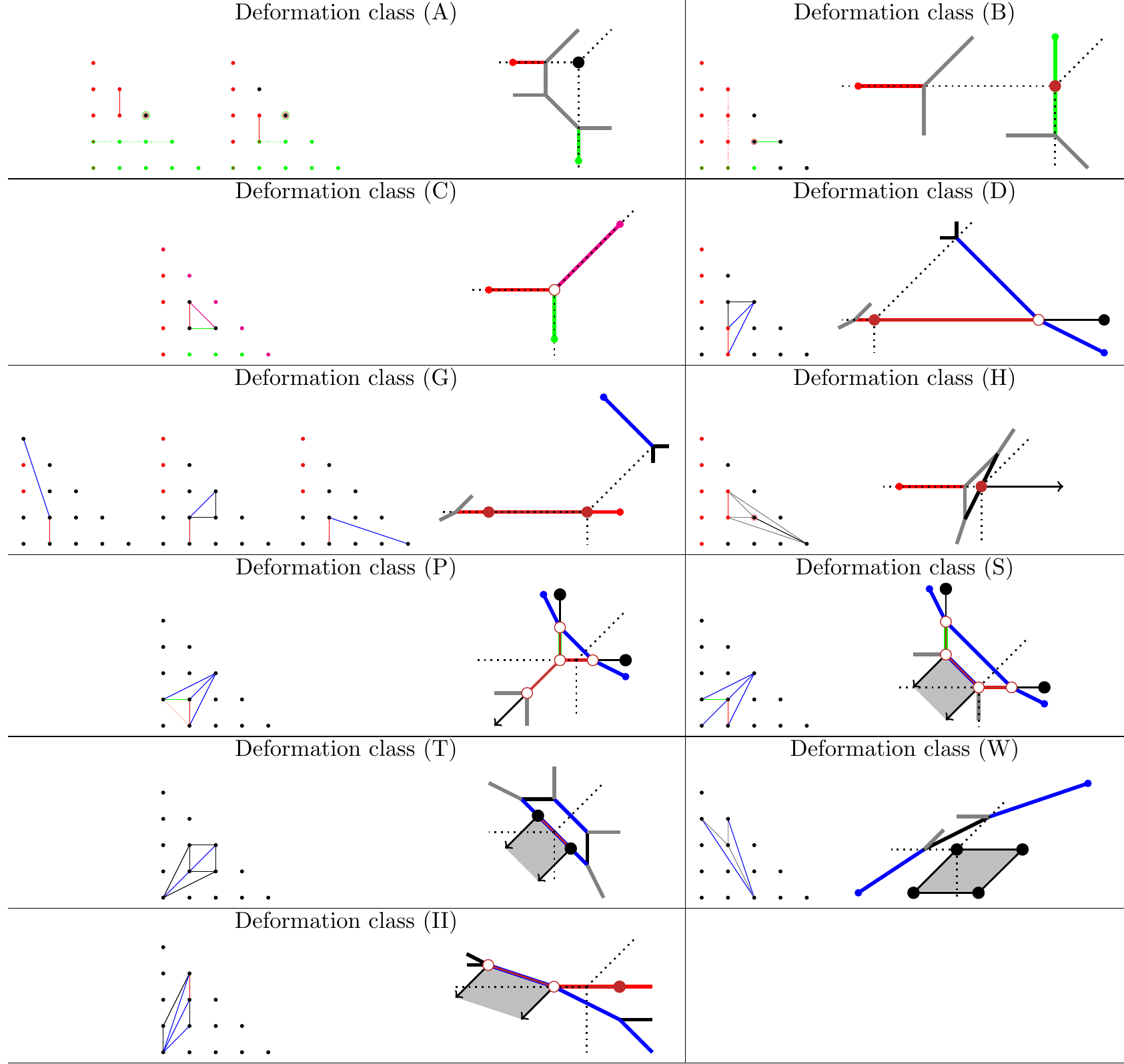}
 \caption{Deformation classes from Lemma \ref{lem:constantdefclasses}. A thickened vertex of an edge in the curve indicates that this edge has to be bounded.} \label{tab:constdefclasses}
 \end{table}
 %\end{landscape}
\end{proof}
 Lemma \ref{lem:constantdefclasses} describes the deformation classes containing  bitangent classes of a unique shape. We now consider two cases of deformation classes of not constant shape. 

\begin{lemma}\label{lem:E-F-J}
	Let $\Gamma$ be a tropical smooth plane quartic curve with dual triangulation $\mathcal{T}$ and $B$ a bitangent class with  dual bitangent motif contained in one of the subcomplexes depicted in Figure \ref{fig:subdiv(E)to(F)to(J)} modulo  $S_3$-symmetry.  For every $c \in \Sigma(\cT)$, the bitangent class $B_c$ can have shapes (E), (F) or (J). 
		\begin{figure}[h]
		\centering
		\includegraphics[width=0.15\textwidth]{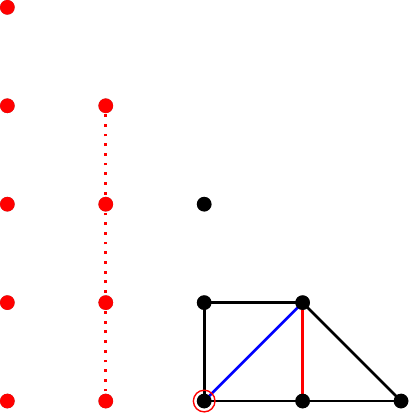}
		\caption{Dual deformation motifs of deformation class (E F J).}
	 \label{fig:subdiv(E)to(F)to(J)}
	\end{figure}
\end{lemma}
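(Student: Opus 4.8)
The plan is to mirror the structure of the proof of Lemma \ref{lem:constantdefclasses}, but now the goal is the opposite: instead of showing that the combinatorics \emph{forbid} a shape change, I want to exhibit that the dual motif in Figure \ref{fig:subdiv(E)to(F)to(J)} is precisely the one that \emph{permits} the three shapes (E), (F), (J) to occur, and that as $c$ ranges over $\Sigma(\cT)$ exactly these three arise. First I would set up the local geometry: the dual bitangent motif determines which edge of $\Gamma$ carries the transversal tangency (the edge dual to $E = \overline{p_{20}p_{13}}$, as in the (E) analysis) and which bounded edges of $\Gamma$ can carry the non-transversal tangency. The key difference from the constant-shape cases is that here there are two candidate bounded edges for the non-transversal intersection (corresponding to the two dotted possibilities in the motif), and the relative lengths of the edges of $\Gamma$ decide which one is actually met and whether the endpoint of the bitangent segment lands on, or misses, the curve.

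Next I would describe the bitangent class $B_c$ explicitly as a polyhedral set, exactly as in the (E) case: its endpoints are obtained by intersecting rays of direction $-(e_1+e_2)$ through the endpoints of $E^\vee$ with a ray of direction $e_1$ through the non-transversal intersection point. The coordinates of these endpoints are affine-linear functions of the edge lengths of $\Gamma$, hence of $c \in \Sigma(\cT)$. I would then identify the combinatorial events that distinguish the three shapes: shape (E) occurs when one configuration of endpoints relative to $\Gamma$ holds, and as an edge length is tuned past a critical value one endpoint of $B_c$ passes through a vertex of $\Gamma$, producing the transitional shape (J), and then shape (F) on the other side. Concretely I would locate the walls in $\Sigma(\cT)$ where an endpoint of $B_c$ coincides with a vertex of $\Gamma$, using the hyperplane arrangement refining the secondary cone described in \cite{2GP21}; crossing such a wall is exactly the continuous deformation of Definition \ref{def:defclass} that realizes the shape change.

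To finish, I would argue in both directions. For \emph{realizability}, it suffices to exhibit, for each of (E), (F), (J), a concrete $c \in \Sigma(\cT)$ giving that shape; Example \ref{ex:EFJ} already supplies the three coefficient vectors $\lambda_1,\lambda_2,\lambda_3$ for one triangulation, and since $\Sigma(\cT)$ is connected (being a relatively open cone, hence convex) the required deformation paths exist automatically. For \emph{exhaustiveness}, I would check that no other shape of Figure \ref{fig:Fig6} can arise from this motif: the slopes of the edges of $\Gamma$ adjacent to the vertices $(\overline{p_{10}p_{11}p_{20}})^\vee$ and $(\overline{p_{12}p_{13}p_{20}})^\vee$ constrain how the endpoints of $B_c$ can meet $\Gamma$, and a case analysis on which bounded edge carries the non-transversal tangency, combined with the position of the endpoints, leaves only (E), (F), (J). The main obstacle I anticipate is the exhaustiveness step: ruling out spurious shapes requires a careful bookkeeping of all the edge-length regimes and the induced endpoint positions, and making sure that the two dotted-edge choices in the motif do not secretly produce a shape outside $\{(E),(F),(J)\}$ (as happened in the cautionary (A) example of the preceding discussion). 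As in Lemma \ref{lem:constantdefclasses}, I expect to present one regime in full detail and summarize the parallel cases, since the geometry is the same up to relabeling.
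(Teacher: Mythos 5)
Your overall strategy --- describe $B_c$ as a segment whose endpoints are ray intersections depending affine-linearly on the edge lengths, locate the walls in $\Sigma(\cT)$ where an endpoint meets a vertex of $\Gamma$, and argue realizability plus exhaustiveness --- is structurally the same as the paper's proof. But two concrete errors in your setup would derail the execution. First, you place the transversal tangency on the edge dual to $E = \overline{p_{20}p_{13}}$, ``as in the (E) analysis.'' That is the motif of the \emph{constant} deformation class (E) from Lemma \ref{lem:constantdefclasses}; for that motif the conclusion you are trying to prove is false (the shape never leaves (E)). In the (E~F~J) motif the transversal tangency lies on the edge dual to $E = \overline{p_{20}p_{31}}$, and it is precisely this different position, together with the vertex $v = (\overline{p_{20}p_{31}p_{21}})^\vee$, that makes deformation possible. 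Your proposal never isolates this distinction; instead you locate ``the key difference'' in the existence of several dotted candidate edges, which is not the difference at all: the constant (E) motif also has two dotted candidates ($\overline{p_{10}p_{11}}$ or $\overline{p_{11}p_{12}}$), while the (E~F~J) motif has three. Granted, the exact edge labels are figure-dependent and you could not see the figures, but identifying what makes this motif deformable is the actual content of the lemma, and the proposal gets it wrong.

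Second, you conflate the dotted edges of the motif with deformation regimes. The dotted edges encode which triangulation $\cT$ one is in; once $\cT$ is fixed, the non-transversal tangency edge $E'$ (the dotted edge forming a triangle with $p_{20}$) is fixed and does not change as $c$ moves in $\Sigma(\cT)$. So there are no walls where the tangency ``switches'' between candidate edges, contrary to your phrase ``the relative lengths of the edges of $\Gamma$ decide which one is actually met.'' The paper's proof first establishes exactly this invariance --- for every $c$ the tangencies lie on $E^\vee$ and $(E')^\vee$ --- and then observes that the shape is governed by a single scalar: the $y$-coordinate of the horizontal edge $(E')^\vee$ relative to the vertex $v$, with the three regimes giving (E), (J) and (F); the $x$-coordinates are irrelevant, which is what rules out all other shapes. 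Your second proposed mechanism (an endpoint of $B_c$ passing through a vertex of $\Gamma$) is the correct one and coincides with this, but it needs to be disentangled from the spurious edge-switching picture. Finally, your realizability step leaning on Example \ref{ex:EFJ} covers only that one triangulation; the paper instead obtains all three shapes for every admissible $\cT$, since the relative vertical position of $(E')^\vee$ and $v$ can be tuned freely inside the open cone $\Sigma(\cT)$.
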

\begin{proof}
	Let $B$ be a bitangent class with dual bitangent motif  as in Figure \ref{fig:subdiv(E)to(F)to(J)}. The first tangency point is the transversal intersection of the diagonal ray of the bitangent with the edge of $\Gamma$  dual to $E = \overline{p_{20}p_{31}}$. The second tangency point is a non-transversal intersection with one of the three bounded edges of $\Gamma$ dual to $\overline{p_{10}p_{11}}$, $\overline{p_{11}p_{12}}$ or $\overline{p_{12}p_{13}}$.
		Since  $\mathcal{T}$ is unimodular, we remark that the subcomplexes we are considering contain the dual bitangent motifs of shapes (E), (F) and (J) in identity position. We need to show that for any $\Gamma_c$ with $c\in \Sigma(\mathcal{T})$ the bitagent class $B$ can only deform between these shapes.

	Let $E'$ be the vertical red dotted edge in $\cT$ that forms a triangle with the red circled lattice point $p_{20}$. The bounded edge of $\Gamma$ dual to $E'$ will always have $y$-coordinate smaller than the vertex $v=(\overline{p_{20}p_{31}p_{21}})^\vee$. Hence, for any $\Gamma_c$ the tangent points of the bitangent class are contained in the edge dual to $E$  and the bounded edge dual to $E'$.
	
	The edge length changes  influence the position of the  intersections. Depending on the $y$-coordinate of the bounded edge dual to $E'$ in comparison with the vertex $v$,  we obtain shape (E), (F) or (J) for $B$.   
	This is illustrated in Figure \ref{fig:(E)to(F)to(J)} for the case $E' = \overline{p_{12}p_{13}}$. The other two cases are analogous. We cannot obtain another shape for $B$ because different $x$-coordinates of the points in the edge dual to $E'$ and of $v$  do not influence the shape. 
\begin{figure}[h]
	\captionsetup[subfigure]{labelformat=empty}
	\centering
	\begin{subfigure}{.3\textwidth}
		\centering\includegraphics[width=0.9\textwidth]{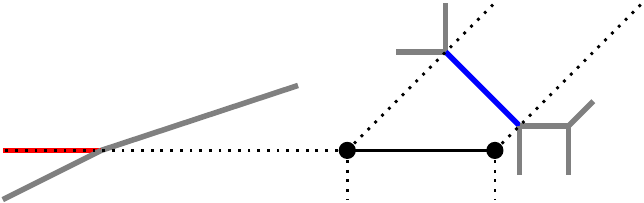}
		\caption{(E)}
	\end{subfigure}
	\begin{subfigure}{.3\textwidth}
		\centering\includegraphics[width=0.9\textwidth]{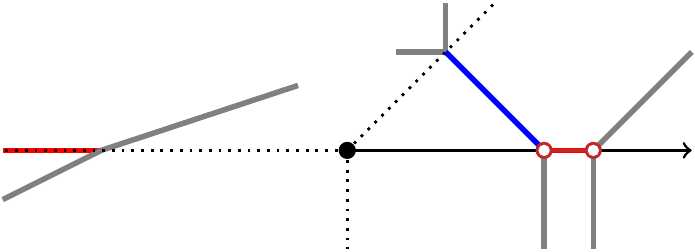}
		\caption{(J)}
	\end{subfigure}
	\begin{subfigure}{.3\textwidth}
		\centering\includegraphics[width=0.9\textwidth]{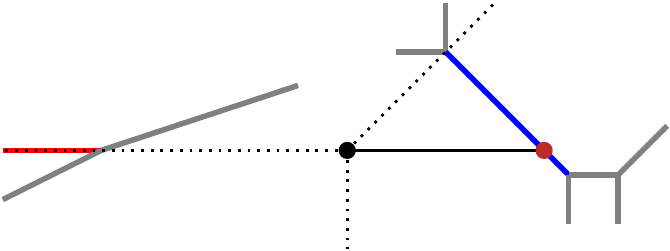}
		\caption{(F)}
	\end{subfigure}
	\caption{Deformation of bitangent shapes (E), (F) and (J).}\label{fig:(E)to(F)to(J)}
\end{figure}

\end{proof}

\begin{lemma}\label{lem:G-K-U-T-V}
	Let $\Gamma$ be a tropical smooth plane quartic curve dual to a triangulation $\mathcal{T}$. Let $B$ be a bitangent class  with  dual bitangent motif contained in one of the two cases illustrated in Figure \ref{fig:subdiv(G)to(K)to(T)to(U)to(V)}, modulo  $S_3$-symmetry. For every $c\in \cT$, the bitangent class $B_c$  can deform through the shapes (G), (K), (U), (U'), (T), (T'), (T''), (V) and into the images of the action by $(x\,y)$  of the shapes (T'), (U), (U'), (K) and (G).
	\begin{figure}[h]
		\centering
		\begin{subfigure}{.2\textwidth}
			\includegraphics[width=0.75\textwidth]{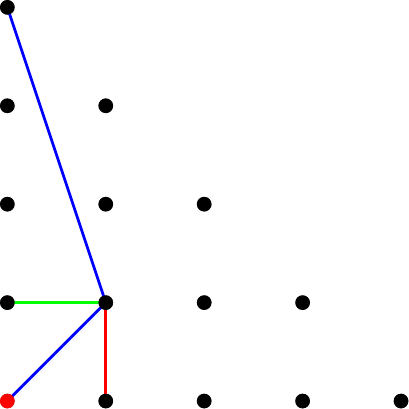}
		\end{subfigure}
		\begin{subfigure}{.2\textwidth}
				\includegraphics[width=0.75\textwidth]{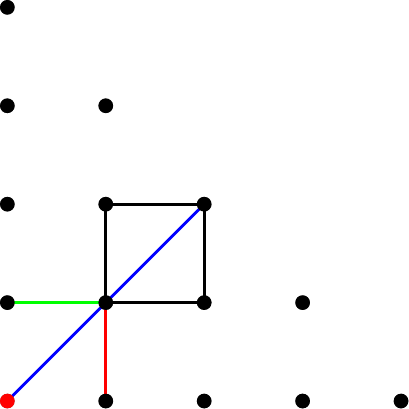}
		\end{subfigure}
		\caption{The two dual deformation motifs of deformation class (G~K~U~U'~T~T'~T''~V)$+(x\,y)$.}\label{fig:subdiv(G)to(K)to(T)to(U)to(V)}
	\end{figure}	
\end{lemma}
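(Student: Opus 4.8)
The plan is to follow the strategy of Lemma~\ref{lem:E-F-J}: read off from the dual bitangent motif which edges and rays of $\Gamma$ carry the two tangency points, and then track how the bitangent class deforms as the edge lengths vary inside $\Sigma(\cT)$. The essential difference from the previous lemma is that the bitangent classes occurring here have a richer polyhedral structure and can be two-dimensional regions rather than line segments, so both the boundary of the class and its incidences with $\Gamma$ must be followed as $c$ moves. First I would fix one of the two dual motifs in Figure~\ref{fig:subdiv(G)to(K)to(T)to(U)to(V)} and determine the admissible positions of the tangency points. As before, unimodularity of $\cT$ forces each tangency to lie on a short, prescribed list of edges of $\Gamma$, namely the bounded edges dual to the colored possible edges of the motif together with the relevant rays of the line $\Lambda$. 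I would then identify the vertices of $\Gamma$ dual to the triangles of the motif and the rays of $\Lambda$ meeting them; these determine the vertices and bounding rays of the class $\BShape$.

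The key step is the parametrization of the deformation. The shape of $\BShape$ is governed by a small number of comparisons between coordinates of the vertices of $\Gamma$ that are involved, for instance whether a given vertex lies above, below, or level with a relevant bounded edge, and analogously in the $x$-direction. Each such comparison corresponds to crossing a wall of the hyperplane arrangement refining $\Sigma(\cT)$ described in Section~\ref{sec:prelim}, and at each wall one shape deforms continuously into a neighbouring one. Walking through the finitely many sign patterns of these comparisons produces exactly the list (G), (K), (U), (U'), (T), (T'), (T''), (V). The additional images under $(x\,y)$ arise because the motif is symmetric under exchanging the two coordinate directions, so the symmetric comparisons yield the $(x\,y)$-images of (T'), (U), (U'), (K) and (G). Following the style of the earlier proofs, I would carry out this walk in detail for one representative chain of deformations and indicate that the remaining transitions are analogous.

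The main obstacle is the completeness of the enumeration. In contrast to the three-shape situation of Lemma~\ref{lem:E-F-J}, here there are thirteen shapes and the two-dimensional class $\BShape$ may change both its combinatorial type and its incidence with $\Gamma$ across several walls at once, so the transitions are no longer governed by a single coordinate comparison. The hard part is to verify that no sign pattern of the comparisons produces a shape outside the claimed list, and conversely that every claimed shape is genuinely realised by some $c\in\Sigma(\cT)$. I expect to handle realisability by exhibiting, for each shape, an explicit admissible placement of the vertices of $\Gamma$, and to exclude stray shapes using the fixed slopes of the edges of $\Gamma$ emanating from the relevant vertices, exactly the slope argument that ruled out misplaced endpoints in the (E F J) case. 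Throughout I would exploit the $(x\,y)$-symmetry of the motif to roughly halve the number of cases that must be checked by hand.
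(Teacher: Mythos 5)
Your proposal follows essentially the same route as the paper's proof: the paper also splits into the two motifs according to whether $\overline{p_{11}p_{22}}$ or $\overline{p_{11}p_{04}}$ lies in $\cT$, realizes each claimed shape by explicit edge-length deformations (recorded in a chain of figures), and rules out any further shapes by precisely the slope argument you describe, namely that rays of direction $e_1+e_2$ through the vertices of $E^{\vee}$ can never reach the upper vertex of $(\overline{p_{01}p_{11}})^\vee$ or the right vertex of $(\overline{p_{10}p_{11}})^{\vee}$. Your framing of the transitions as wall-crossings in the refined secondary cone is just a more formal phrasing of the paper's ``depending on the edge lengths,'' so the two arguments coincide in substance.
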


\begin{proof}

We need to distinguish two cases in Figure  \ref{fig:subdiv(G)to(K)to(T)to(U)to(V)} depending on whether the edge $\overline{p_{11}p_{22}}$ or the edge $\overline{p_{11}p_{04}}$ is contained in the  triangulation $\cT$.  
	Figure \ref{fig:(G)to(K)to(T)to(U)to(V)} shows deformations of  $B$ into the claimed shapes by edge length changes of $\Gamma$ when the blue edge is $E=\overline{p_{11}p_{22}}$. Analogous pictures can be drawn for $\overline{p_{11}p_{04}}$. It remains to argue that these are the only shapes $B$ can deform into. 
	
	\begin{figure}[h]
		\captionsetup[subfigure]{labelformat=empty}
		\centering
		\begin{subfigure}[b]{.13\textwidth}
			\centering\includegraphics[width=0.9\textwidth]{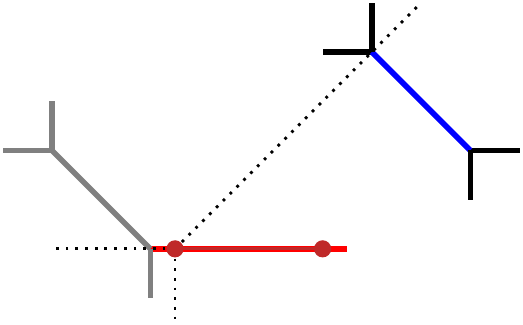}
			\caption{(G)}
		\end{subfigure}
		\begin{subfigure}[b]{.13\textwidth}
			\centering\includegraphics[width=0.9\textwidth]{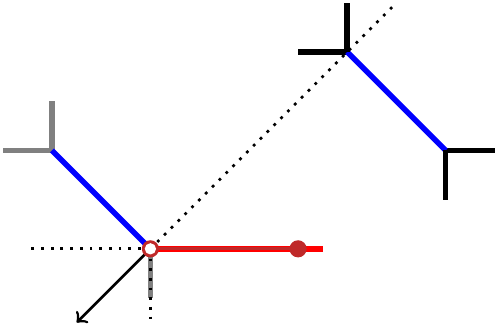}
			\caption{(K)}
		\end{subfigure}
		\begin{subfigure}[b]{.13\textwidth}
			\centering\includegraphics[width=0.9\textwidth]{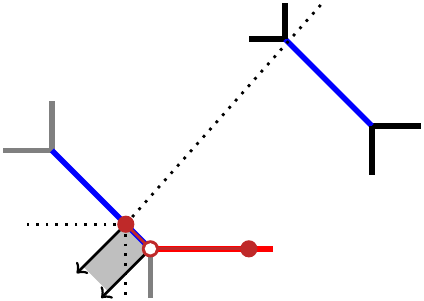}
			\caption{(U)}
		\end{subfigure}
		\begin{subfigure}[b]{.13\textwidth}
			\centering\includegraphics[width=0.9\textwidth]{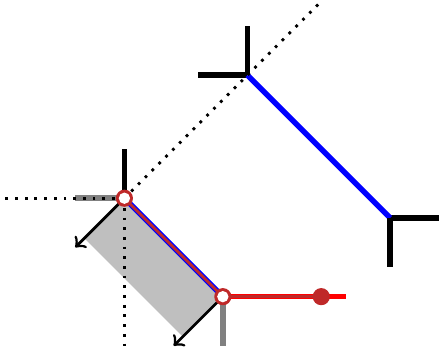}
			\caption{(U')}
		\end{subfigure}
		\begin{subfigure}[b]{.13\textwidth}
			\centering\includegraphics[width=0.9\textwidth]{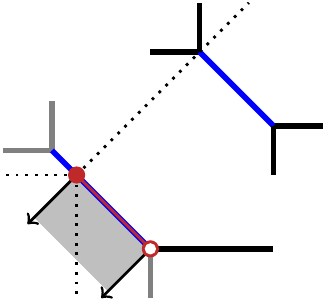}
			\caption{(T')}
		\end{subfigure}
		\begin{subfigure}[b]{.13\textwidth}
			\centering\includegraphics[width=0.9\textwidth]{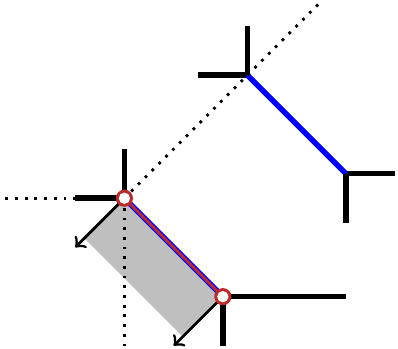}
			\caption{(T'')}
		\end{subfigure}
		\begin{subfigure}[b]{.13\textwidth}
			\centering\includegraphics[width=0.9\textwidth]{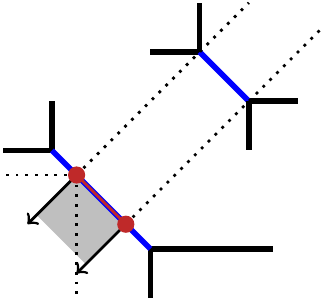}
			\caption{(T)}
		\end{subfigure}
	\begin{subfigure}[b]{.14\textwidth}
		\centering\includegraphics[width=0.9\textwidth]{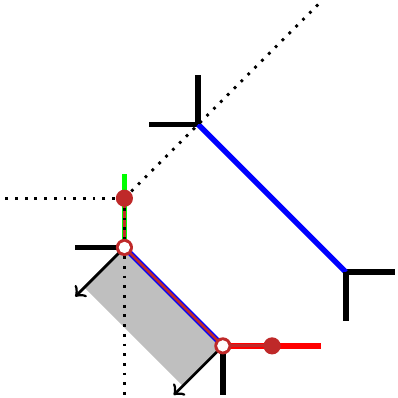}
		\caption{(V)}
	\end{subfigure}
		\begin{subfigure}[b]{.14\textwidth}
			\centering\includegraphics[width=0.9\textwidth]{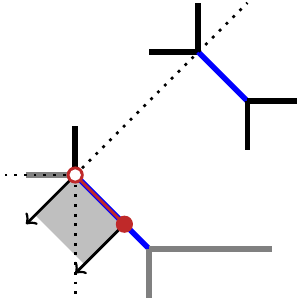}
			\caption{(T')$_{(xy)}$ }
		\end{subfigure}
		\begin{subfigure}[b]{.14\textwidth}
			\centering\includegraphics[width=0.9\textwidth]{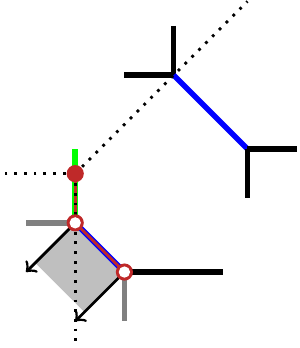}
			\caption{(U')$_{(xy)}$ }
		\end{subfigure}
		\begin{subfigure}[b]{.14\textwidth}
			\centering\includegraphics[width=0.9\textwidth]{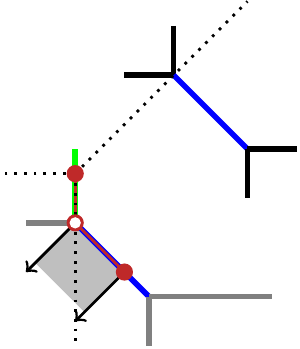}
			\caption{(U)$_{(xy)}$ }
		\end{subfigure}
		\begin{subfigure}[b]{.14\textwidth}
			\centering\includegraphics[width=0.9\textwidth]{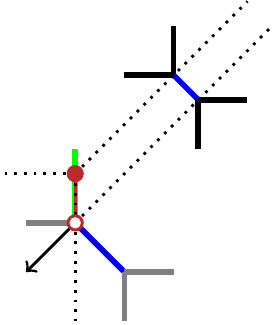}
			\caption{(K)$_{(xy)}$ }
		\end{subfigure}
		\begin{subfigure}[b]{.14\textwidth}
			\centering\includegraphics[width=0.9\textwidth]{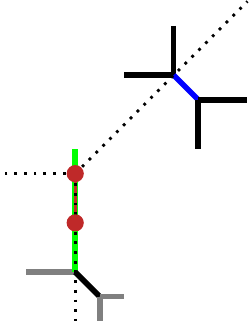}
			\caption{(G)$_{(xy)}$ }
		\end{subfigure}
		\caption{Deformation of bitangent shapes (G), (K), (U), (U'), (T''), (T), (V), (T'), (U')$_{(x\,y)}$, (U)$_{(x\,y)}$, (K)$_{(x\,y)}$, (G)$_{(x\,y)}$.}\label{fig:(G)to(K)to(T)to(U)to(V)}
	\end{figure}

	We see that $B$ cannot deform into any other shapes by considering Figure \ref{fig:(G)to(K)to(T)to(U)to(V)}. Rays with direction $e_1+e_2$ passing through the vertices of $E^{\vee}$ cannot  intersect the upper vertex of $(\overline{p_{01}p_{11}})^\vee$ or the right vertex of $(\overline{p_{10}p_{11}})^{\vee}$. Thus, $B$ cannot deform further. 	
\end{proof}

Similar reasoning leads us to complete the classification in Theorem \ref{thm:classification}. Orbit representatives of the dual complexes describing the deformation classes are illustrated in Figure \ref{fig:defclasses}.\\

\emph{Proof of Theorem \ref{thm:classification}.} 
	\begin{longtable}{m{5cm}|m{8.5cm}}		
		\hline\multicolumn{2}{c}{\textbf{(B H' H)}}\\
		\hline 
			\includegraphics[width = 0.15\textwidth]{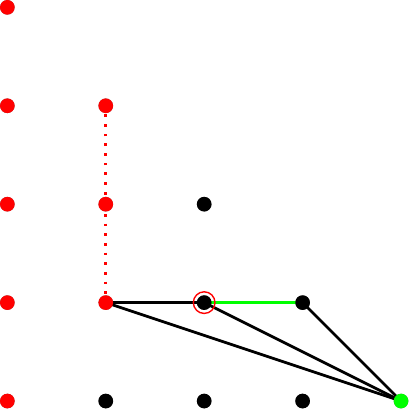} & 
First tangency: Non-transversal intersection on  $(\overline{p_{11}p_{12}})^\vee$ or $(\overline{p_{12}p_{13}})^\vee$.

Second tangency:  Depending on the edge lengths, vertex of the bitangent lying on the edge $(\overline{p_{21}p_{31}})^\vee$ (B), $(\overline{p_{21}p_{40}})^\vee$ (H) or on their shared vertex (H').  

No further deformations: The vertices $\overline{p_{40}p_{21}p_{11}}^\vee$ resp. $\overline{p_{21}p_{31}p_{40}}^\vee$ must have $y$-coordinate smaller resp. larger than $(\overline{p_{11}p_{12}})^\vee$ and $(\overline{p_{12}p_{13}})^\vee$. 
 \\
		\hline
		
			\newpage 
		\hline
		\multicolumn{2}{c}{\textbf{(B H' H)$+(y\,z)$}}\\ \hline
		\includegraphics[width = 0.15\textwidth]{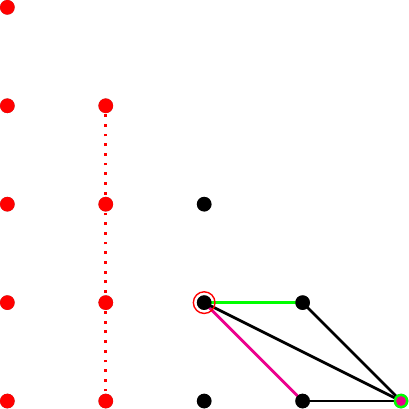} & 
		First tangency: Non-transversal intersection 
		on $(\overline{p_{1v}p_{1,v+1}})^\vee$.

Second tangency:   Depending on the edge lengths, vertex of the bitangent lying on the edge $(\overline{p_{21}p_{31}})^\vee$ (B), $(\overline{p_{21}p_{40}})^\vee$ (H),  $(\overline{p_{21}p_{30}})^\vee$  (B)$_{(yz)}$ or on their shared vertices (H') resp. (H')$_{(yz)}$.

No further deformations: The lowest vertex of $(\overline{p_{21}p_{30}})^\vee$ has $y$-coordinate smaller than $(\overline{p_{21}p_{1v}p_{1,v+1}})^\vee$.
\\
		\hline
		\multicolumn{2}{c}{\textbf{(B M)$+(y\,z)$ }}\\ \hline
		\includegraphics[width = 0.15\textwidth]{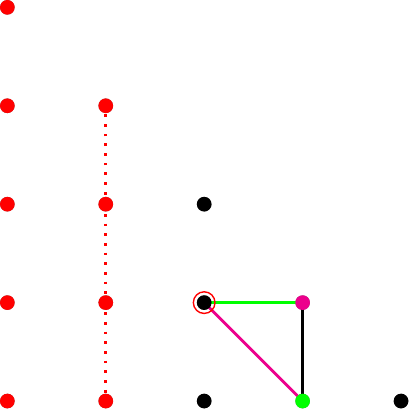} &  
			First tangency: Non-transversal intersection on  $(\overline{p_{1v}p_{1,v+1}})^\vee$.
				
		Second tangency: Depending on the edge lengths, vertex of the bitangent lying on the edge $(\overline{p_{21}p_{31}})^\vee$ (B),   $(\overline{p_{21}p_{30}})^\vee$  (B)$_{(yz)}$ or on the ray of direction $e_1$ starting at $(\overline{p_{21}p_{30}p_{31}})^\vee$ (M).
		
		No further deformations: The lower vertex of $(\overline{p_{21}p_{30}})^\vee$ has always smaller $y$-coordinate than $(\overline{p_{1v}p_{1,v+1}p_{21}})^\vee$ and the upper vertex of $(\overline{p_{21}p_{31}})^\vee$ has always larger $y$-coordinate than $(\overline{p_{1v}p_{1,v+1}})^\vee$.
	\\
		\hline
		\multicolumn{2}{c}{\textbf{(D L' Q) }}\\ \hline 
		\includegraphics[width = 0.15\textwidth]{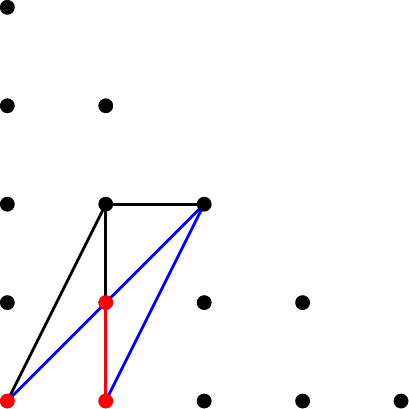} &
		First tangency: Transversal intersection of the diagonal ray resp. vertical ray of the bitangent line with $(\overline{p_{11}p_{22}})^\vee$ resp. $(\overline{p_{22}p_{10}} )^\vee$.
		
		Second tangency: Depending on the edge lengths, the non-transversal intersection with $(\overline{p_{10}p_{11}})^\vee$  
		in the horizontal ray of the bitangent line (D) deforms to a transversal intersection of  $(\overline{p_{00}p_{11}})^\vee$ with the diagonal ray 
		(Q), through the shared vertex (L').
	
	No further deformation: Due to the edge directions determined by the dual bitangent motif.
		\\ \hline
	\newpage 
		\hline
		\multicolumn{2}{c}{\textbf{(D L' Q Q' R)}} \\ \hline\includegraphics[width = 0.15\textwidth]{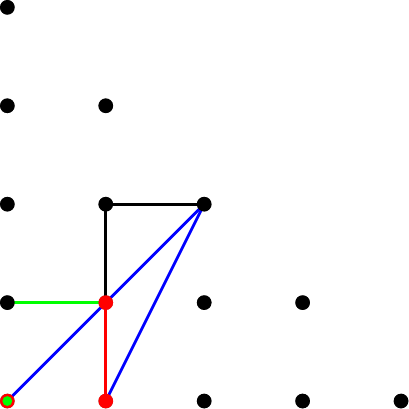} &
		
			First tangency: Same as for (D L' Q).
		
		Second tangency: Depending on the edge lengths, the non-transversal intersection with $(\overline{p_{10}p_{11}})^\vee$ in the horizontal ray of the bitangent line (D) deforms through the shared vertex (L') to a transversal intersection of $(\overline{p_{00}p_{11}})^\vee$ with the diagonal ray (Q), through the shared vertex (Q') to a non-transversal intersection of $(\overline{p_{01}p_{11}})^\vee$ with the vertical ray (R).
		
		No further deformation: The upper vertex of $(\overline{p_{01}p_{11}})^\vee$ has always larger $y$-coordinate than any intersection of the ray of direction $-(e_1+e_2)$ starting at $(\overline{p_{22}p_{11}p_{12}})^\vee$ with $(\overline{p_{01}p_{11}})^\vee$.
		 \\ \hline
		\multicolumn{2}{c}{\textbf{(D L O)}}\\ \hline
		\includegraphics[width = 0.15\textwidth]{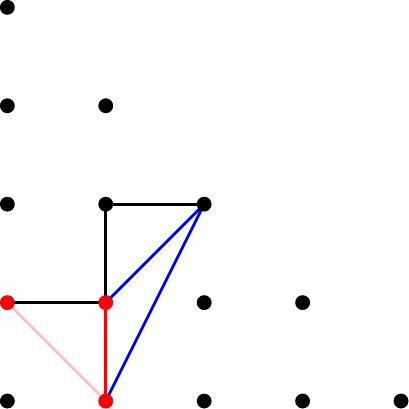} & 
		
		 	First tangency: Same as for (D L' Q).
		 
		 Second tangency: Depending on the edge lengths, the non-transversal intersection with $(\overline{p_{10}p_{11}})^\vee$ in the horizontal ray of the bitangent line (D) deforms through the shared vertex (L) to a non-transversal intersection of $(\overline{p_{01}p_{11}})^\vee$ with vertical ray (O).
		 
		 No further deformation: Same as for (D-L'-Q-Q'-R).
		\\ \hline
	
		\multicolumn{2}{c}{\textbf{(G I N)$+(x\,y)$}}\\ \hline
		
		\includegraphics[width = 0.15\textwidth]{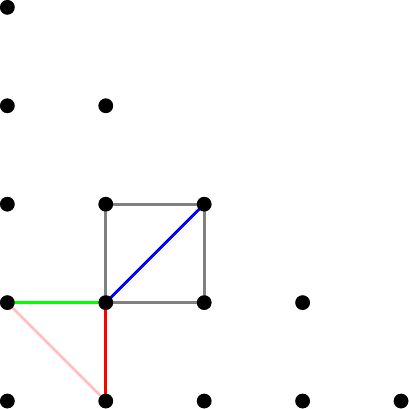}  \includegraphics[width = 0.15\textwidth]{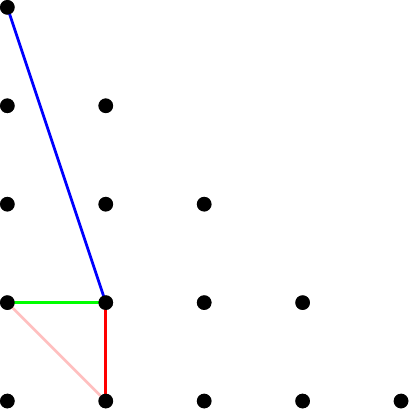} & 
		First tangency: Transversal intersection of the diagonal ray with $(\overline{p_{11}p_{22}})^\vee$ resp. $(\overline{p_{11}p_{04}})^\vee$.
		
		Second tangency:   Non-transversal intersection of the horizontal ray with $(\overline{p_{10}p_{11}})^\vee$ (G,I,N), non-transversal intersection of the diagonal ray  with $(\overline{p_{10}p_{01}})^\vee$ (I,N), non-transversal intersection of the horizontal ray  with $(\overline{p_{01}p_{11}})^\vee$ (N, I$_{(xy)}$,G$_{(xy)}$), depending on the edge lengths.
		
		No further deformations: Rays of direction $-(e_1+e_2)$ starting from the two vertices of $(\overline{p_{11}p_{22}})^\vee$ resp. $(\overline{p_{11}p_{04}})^\vee$ always intersect $(\overline{p_{01}p_{11}})^\vee$ resp.  $(\overline{p_{10}p_{11}})^\vee$ under resp. left of their other vertex (\textbf{not} $(\overline{p_{10}p_{01}p_{11}})^\vee$).
		\\ \hline
			\newpage 
		\hline
		\multicolumn{2}{c}{\textbf{(G K U T T')}}\\ \hline
		\includegraphics[width = 0.15\textwidth]{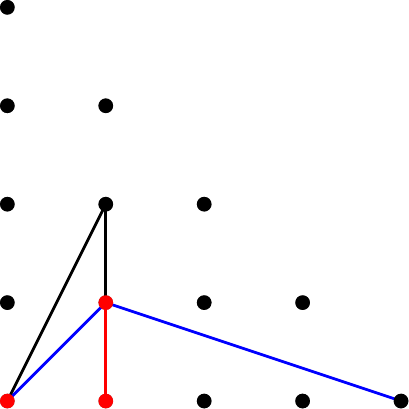} \includegraphics[width = 0.15\textwidth]{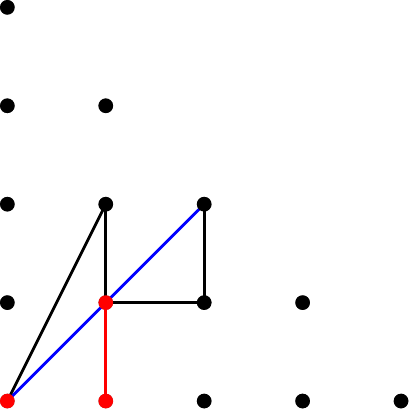} & 
		First tangency: Transversal intersection of the diagonal ray with $(\overline{p_{11}p_{22}})^\vee$ resp. $(\overline{p_{11}p_{40}})^\vee$.
		
		Second tangency:  Non-transversal intersection of the horizontal ray with $(\overline{p_{10}p_{11}})^\vee$ (G,K,U), transversal intersection of the diagonal ray with $(\overline{p_{10}p_{01}p_{00}})^\vee$ (K,U,T'),  transversal intersection of the diagonal ray with $(\overline{p_{10}p_{01}})^\vee$ (U,T',T), depending on the edge lengths.
		
		No further deformations: Similar to (G I N)$+(x\,y)$. \\
			\hline 

					\multicolumn{2}{c}{\textbf{(W X Y EE GG )}}\\ \hline
					\includegraphics[width = 0.15\textwidth]{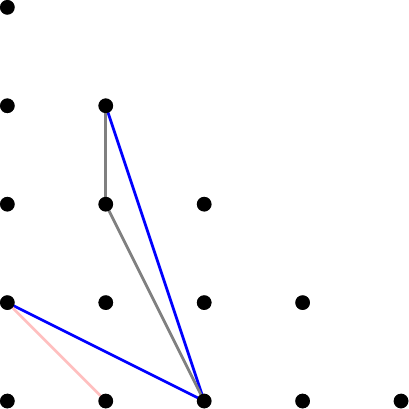} & First tangency: Transversal intersection of the diagonal ray with $(\overline{p_{20}p_{13}})^\vee$.
					
					Second tangency: Transversal intersection of the horizontal ray with $(\overline{p_{01}p_{20}})^\vee$ (W). Depending on the edge lengths, additionally: Transversal intersection with $(\overline{p_{10}p_{01}})^\vee$ (X$_{(xz)}$,Y$_{(xz)}$,GG), vertex of bitangent line contained in $(\overline{p_{01}p_{20}})^\vee$ (Y$_{(xz)}$,EE,GG).
					
					No further deformations: Due to its position and slope, $(\overline{p_{20}p_{13}})^\vee$ never intersects the the bitangent class; the diagonal ray starting from  $(\overline{p_{12}p_{20}p_{13}})^\vee$ never meets  $(\overline{p_{01}p_{20}p_{11}})^\vee$; the intersection of the  diagonal rays from the vertices of $(\overline{p_{20}p_{13}})^\vee$ with the horizontal rays from the vertices of $(\overline{p_{01}p_{20}})^\vee$ always lead to a $2$-dimensional bounded cell.	\\ \hline
					\multicolumn{2}{c}{\textbf{(W...HH)$+(x\, z)$}}\\ \hline
					\includegraphics[width = 0.15\textwidth]{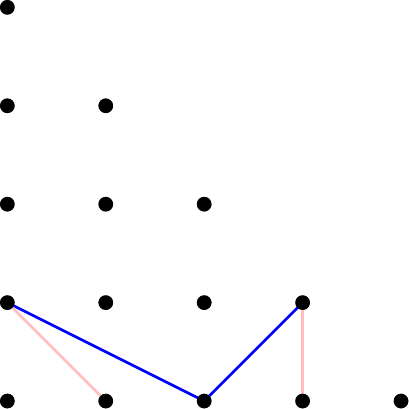} &
					Due to $(x\, z)$-symmetry, we describe the tangency types only once.
							
					Tangency: Transversal intersection of the diagonal ray of the bitangent line with $(\overline{p_{20}p_{31}})^\vee$.
					Depending on the edge lengths, there are additional types: transversal intersection with $(\overline{p_{30}p_{31}})^\vee$  \begin{small}(X,Y,Z,AA,AA$_{(xz)}$,BB,CC,DD,FF,GG$_{(xz)}$,HH,HH$_{(xz)}$), \end{small}% resp. $(\overline{p_{10}p_{01}})^\vee$ (X$_{(xz)}$,Y$_{(xz)}$,Z,AA,AA$_{(xz)}$,BB,CC$_{(xz)}$, DD,FF$_{(xz)}$,GG,HH,HH$_{(xz)}$)
					vertex of bitangent line contained in $(\overline{p_{31}p_{20}})^\vee$ \begin{small}(Y,AA,BB,CC,CC$_{(xz)}$,DD,EE$_{(xz)}$,FF$_{(xz)}$,GG$_{(xz)}$,HH$_{(xz)}$).\end{small}% resp. $(\overline{p_{01}p_{20}})^\vee$ (Y$_{(xz)}$,AA$_{(xz)}$,BB,CC,CC$_{(xz)}$,DD,EE,FF,GG,HH).		
							
					No further deformations: $(\overline{p_{20}p_{13}})^\vee$ and $(\overline{p_{01}p_{20}})^\vee$ are always in relative position such that the intersection of rays of direction $-(e_1+e_2)$ starting from the vertices of $(\overline{p_{20}p_{13}})^\vee$ with rays of direction $e_1$ starting from the vertices of $(\overline{p_{01}p_{20}})^\vee$ (with $\Gamma$ as boundary where necessary) will always form a 2-dimensional cell in the area between the two edges.	 \\ \hline
			\end{longtable}
\hfill $\Box$

\clearpage
\begin{landscape}
	\begin{figure}
		\centering
		\begin{subfigure}{.35\textwidth}
			\centering
			\begin{subfigure}{.45\textwidth}
				\centering
				\includegraphics[width=0.9\textwidth]{figures/defclass/_A__1.pdf}
			\end{subfigure}	\begin{subfigure}{.45\textwidth}
				\centering
				\includegraphics[width=0.9\textwidth]{figures/defclass/_A__2.pdf}
			\end{subfigure}
			\caption*{(A)}
		\end{subfigure}
		\begin{subfigure}{.15\textwidth}
			\centering
			\includegraphics[width=0.9\textwidth]{figures/defclass/_B_-_H__-_H_.pdf}
			\caption*{(B H' H)}
		\end{subfigure}
		\begin{subfigure}{.15\textwidth}
			\centering
			\includegraphics[width=0.9\textwidth]{figures/defclass/_B_-_H__-_H_-_H__-_B_.pdf}
			\caption*{(B H' H)+$(yz)$}
		\end{subfigure}
		\begin{subfigure}{.15\textwidth}
			\centering
			\includegraphics[width=0.9\textwidth]{figures/defclass/_B_-_M_-_B_.pdf}
			\caption*{(B M)+$(yz)$}
		\end{subfigure}
		\begin{subfigure}{.15\textwidth}
			\centering
			\includegraphics[width=0.9\textwidth]{figures/defclass/_B_.pdf}
			\caption*{(B)}
		\end{subfigure}
		\begin{subfigure}{.15\textwidth}
			\centering
			\includegraphics[width=0.9\textwidth]{figures/defclass/_C_.pdf}
			\caption*{(C)}
		\end{subfigure}
		\begin{subfigure}{.15\textwidth}
			\centering
			\includegraphics[width=0.9\textwidth]{figures/defclass/_D_-_L__-_Q_.pdf}
			\caption*{(D L' Q)}
		\end{subfigure}
		\begin{subfigure}{.15\textwidth}
			\centering
			\includegraphics[width=0.9\textwidth]{figures/defclass/_D_-_L__-_Q_-_Q__-_R_.pdf}
			\caption*{(D L' Q Q' R)}
		\end{subfigure}
		\begin{subfigure}{.15\textwidth}
			\centering
			\includegraphics[width=0.9\textwidth]{figures/defclass/_D_-_L_-_O_.pdf}
			\caption*{(D L O)}
		\end{subfigure}
		\begin{subfigure}{.15\textwidth}
			\centering
			\includegraphics[width=0.7\textwidth]{figures/subdiv_D_.pdf}
			\caption*{(D)}
		\end{subfigure}
		\begin{subfigure}{.15\textwidth}
			\centering
			\includegraphics[width=0.9\textwidth]{figures/defclass/_E_.pdf}
			\caption*{(E)}
		\end{subfigure}
		\begin{subfigure}{.15\textwidth}
			\centering
			\includegraphics[width=0.9\textwidth]{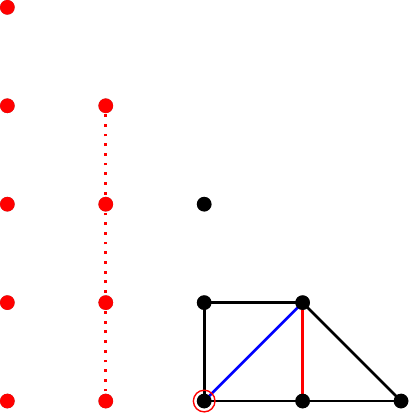}
			\caption*{(E F J)}
		\end{subfigure}
		\begin{subfigure}{.35\textwidth}
			\centering
			\begin{subfigure}{.45\textwidth}
				\centering
				\includegraphics[width=0.9\textwidth]{figures/defclass/_G_-_I_-_N_-_I_-_G_1.pdf}
			\end{subfigure}	\begin{subfigure}{.45\textwidth}
				\centering
				\includegraphics[width=0.9\textwidth]{figures/defclass/_G_-_I_-_N_-_I_-_G_2.pdf}
			\end{subfigure}
			\caption*{(G I N) +$(xy)$}
		\end{subfigure}
		\begin{subfigure}{.35\textwidth}
			\centering
			\begin{subfigure}{.45\textwidth}
				\centering
				\includegraphics[width=0.9\textwidth]{figures/defclass/_G_-_K_-_U_-_T_-_T__1.pdf}
			\end{subfigure}	\begin{subfigure}{.45\textwidth}
				\centering
				\includegraphics[width=0.9\textwidth]{figures/defclass/_G_-_K_-_U_-_T_-_T__2.pdf}
			\end{subfigure}
			\caption*{(G K U T T')}
		\end{subfigure}
		\begin{subfigure}{.35\textwidth}
			\begin{subfigure}{.45\textwidth}
				\centering
				\includegraphics[width=0.9\textwidth]{figures/defclass/_G_-_K_-_U_-_T_-_T__-_T___-_V_1.pdf}
			\end{subfigure}
			\begin{subfigure}{.45\textwidth}
				\centering
				\includegraphics[width=0.9\textwidth]{figures/defclass/_G_-_K_-_U_-_T_-_T__-_T___-_V_2.pdf}
			\end{subfigure}
			\caption*{(G K U U' T T' T'' V)+$(x\,y)$}
		\end{subfigure}
		\begin{subfigure}{.5\textwidth}
			\centering
			\begin{subfigure}{.3\textwidth}
				\centering
				\includegraphics[width=0.9\textwidth]{figures/defclass/_G_1.pdf}
			\end{subfigure}
			\begin{subfigure}{.3\textwidth}
				\centering
				\includegraphics[width=0.9\textwidth]{figures/defclass/_G_2.pdf}
			\end{subfigure}
			\begin{subfigure}{.3\textwidth}
				\centering
				\includegraphics[width=0.9\textwidth]{figures/defclass/_G_3.pdf}
			\end{subfigure}	
			\caption*{(G)}
		\end{subfigure}
		\begin{subfigure}{.15\textwidth}
			\centering
			\includegraphics[width=0.9\textwidth]{figures/defclass/_H_.pdf}
			\caption*{(H)}
		\end{subfigure}
		\begin{subfigure}{.15\textwidth}
			\centering
			\includegraphics[width=0.9\textwidth]{figures/defclass/_P_.pdf}
			\caption*{(P)}
		\end{subfigure}
		\begin{subfigure}{.15\textwidth}
			\centering
			\includegraphics[width=0.9\textwidth]{figures/defclass/_S_.pdf}
			\caption*{(S)}
		\end{subfigure}
		\begin{subfigure}{.15\textwidth}
			\centering
			\includegraphics[width=0.9\textwidth]{figures/defclass/_T_.pdf}
			\caption*{(T)}
		\end{subfigure}
		\begin{subfigure}{.15\textwidth}
			\centering
			\includegraphics[width=0.9\textwidth]{figures/defclass/_W_.pdf}
			\caption*{(W)}
		\end{subfigure}
		\begin{subfigure}{.2\textwidth}
			\centering
			\includegraphics[width=0.73\textwidth]{figures/defclass/_W_-_X_-_Y_-_EE_-_GG_.pdf}
			\caption*{(W X Y EE GG)}
		\end{subfigure}
		\begin{subfigure}{.15\textwidth}
			\centering
			\includegraphics[width=0.9\textwidth]{figures/defclass/_W_-to-_HH_.pdf}
			\caption*{(W...HH)+$(xz)$}
		\end{subfigure}
		\begin{subfigure}{.15\textwidth}
			\centering
			\includegraphics[width=0.9\textwidth]{figures/defclass/_II_.pdf}
			\caption*{(II)}
		\end{subfigure}
		\caption{A list of the dual deformation motifs of all 24 deformation classes.}\label{fig:defclasses}
	\end{figure}
	
\end{landscape}

\section{Real lifting conditions of deformation classes}\label{sec:lifting}
\noindent
In this section, we focus on the real lifting conditions determined by each shape in a  deformation class. This leads to a new proof of Pl\"ucker and Zeuthen's count for tropically smooth quartics.  
As before, we study few cases in detail and give an overview for the remaining ones. The deformation class (C) requires a special argument, as is explained in Example \ref{exa:C}. 

\begin{lemma}\label{prop:signsEFJ}
	Let $\Gamma$ be a smooth tropical plane quartic with dual triangulation $\cT$ and a bitangent shape $B$ in the deformation class (E F J). For every $c\in\Sigma(\cT)$ the real lifting conditions of $B_c$ in $\Gamma_c$ are independent of the shape of the bitangent class. 
\end{lemma}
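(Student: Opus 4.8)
The plan is to show that, within the deformation class (E F J), the real lifting condition is a function of the \emph{tangency data} alone — the two edges of $\Gamma$ carrying the two tangencies together with the fixed sign vector $(s_{ij})$ of $V(f)$ — and not of the finer distinction between the shapes (E), (F) and (J). Once this is established, the claim follows immediately: by Lemma \ref{lem:E-F-J} all three shapes share the \emph{same} pair of tangency edges for every $c\in\Sigma(\cT)$, so the condition must be one and the same inequality throughout the deformation.

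Concretely, I would first recall from Lemma \ref{lem:E-F-J} that throughout the deformation the first tangency is the transversal meeting of the diagonal ray with the edge $E^\vee$ dual to $E=\overline{p_{20}p_{31}}$, while the second tangency is the non-transversal meeting of the horizontal ray with the \emph{fixed} edge $(E')^\vee$ dual to $E'=\overline{p_{1v}p_{1,v+1}}$; passing between (E), (F) and (J) only moves $(E')^\vee$ vertically relative to the vertex $\mathbf v=(\overline{p_{20}p_{31}p_{21}})^\vee$ and never changes which two edges are met. (Here $\mathbf v$ is the tropical vertex, not to be confused with the integer index $v$ of $E'$.) Working modulo $S_3$, I may fix the position of the motif as in Figure \ref{fig:subdiv(E)to(F)to(J)}, so that all three shapes occupy a common $S_3$-position and the row (E),(F),(J) of Table \ref{tab:tab11} applies verbatim to each of them.

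It then remains to match the symbols in $(-s_{1v}s_{1,v+1})^{i+1}s_{0i}s_{20}>0$ with this fixed data: $s_{20}$ is the sign attached to the endpoint $p_{20}$ of $E$, the pair $s_{1v},s_{1,v+1}$ are the signs attached to the endpoints of $E'$, and the index $i$ (hence $s_{0i}$ together with the exponent $i+1$) is read off from the chain of triangles of $\cT$ joining $E'$ to the boundary edge on $x=0$, so it is determined by $E'$ and $\cT$ alone. Since $E$, $E'$ and $\cT$ are common to the three shapes, the integers $v,i$ and the four relevant signs are literally identical in all three cases, and the inequality coincides — which is exactly the asserted independence of the shape.

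The one step requiring genuine care is excluding that the second tangency migrates to a neighbouring horizontal edge of $\Gamma$ as the shape changes, since that would alter $v$ and hence the inequality. This is precisely the ``no further deformation'' content of Lemma \ref{lem:E-F-J}: the slopes of the edges emanating from $\mathbf v$ pin the horizontal ray to $(E')^\vee$ for every $c\in\Sigma(\cT)$. I expect this to be the main — and essentially the only — obstacle, and it is already discharged by the preceding lemma; everything else is bookkeeping of the indices entering the condition in Table \ref{tab:tab11}.
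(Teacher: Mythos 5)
Your proposal is correct and follows essentially the same route as the paper's proof: invoke Lemma \ref{lem:E-F-J} to see that all three shapes occur in the same $S_3$-position with the same pair of tangency edges, note that (E), (F) and (J) share a single row of Table \ref{tab:tab11}, and observe that the parameters $v$ and $i$ in that row are determined by the fixed triangulation $\cT$, so the inequality is literally the same for all three shapes. Your extra care about the second tangency not migrating to a neighbouring edge is indeed exactly the content already discharged in Lemma \ref{lem:E-F-J}, which the paper's proof uses implicitly.
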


\begin{proof} From the proof of Lemma \ref{lem:E-F-J} we know that a bitangent class in the deformation class (E F J) can deform into any of the three shapes, all in the same position with respect to the action of $S_3$. % in their identity positions. 
	In order to find the real lifting conditions, we only need to consult Table \ref{tab:tab11} containing the real lifting conditions for the identity positions. As the triangulation $\cT$ is fixed, the values that have to be substituted for $v$ and $i$ in the formula from Table \ref{tab:tab11} do not change for the three shapes. 	
\end{proof}

\begin{lemma}\label{prop:signsGKUTV}
	Let $\Gamma$ be a smooth tropical plane quartic curve with dual triangulation $\cT$ and a bitangent shape $B$ in a deformation class (G K U U' T T' T'' V)+$(x\,y)$. For every $c\in\Sigma(\cT)$ the real lifting conditions of $B_c$ in $\Gamma_c$ are independent of the shape of the bitangent class. 
\end{lemma}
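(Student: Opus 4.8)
The plan is to follow the same strategy as in Lemma \ref{prop:signsEFJ}, but to insert one reduction step that is not needed there: in the present class the relevant rows of Table \ref{tab:tab11} are not literally identical, so after listing the shapes one must check that their entries collapse to a common expression. First I would invoke Lemma \ref{lem:G-K-U-T-V}, which tells us that for fixed $c\in\Sigma(\cT)$ the class $B_c$ realizes one of (G), (K), (U), (U'), (T), (T'), (T''), (V) or an $(x\,y)$-image of (T'), (U), (U'), (K), (G), and that along the whole deformation one tangency is always the transversal intersection of the diagonal ray with $E^\vee$, for $E=\overline{p_{11}p_{22}}$ (so $k=2$) or $E=\overline{p_{11}p_{04}}$ (so $k=0$); the two choices are handled analogously, exactly as in Lemma \ref{lem:G-K-U-T-V}. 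Since $\cT$ is fixed, the integers $i$ and $k$ entering Table \ref{tab:tab11} are determined by the motif and do not vary within the class, and the factor $s_{k,4-k}$ attached to the fixed transversal tangency is a common constant of every lifting condition.

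Reading off Table \ref{tab:tab11}, the shapes (K), (T), (T'), (T''), (U), (U'), (V) all carry the single condition $s_{00}\,s_{k,4-k}>0$, which involves neither $i$ nor $v$, so these already agree. The only entry with an index-dependent factor is (G), namely $(-s_{10}s_{11})^{i}s_{0i}\,s_{k,4-k}>0$. The decisive step is to read the value of $i$ for shape (G) inside this particular motif: its non-transversal tangency runs along the edge dual to $\overline{p_{10}p_{11}}$, which sits against the corner triangle $\overline{p_{00}p_{10}p_{01}}$ at the origin, so $i=0$ (in contrast to the class (G I N)$+(x\,y)$, where the same shape would have $i=1$). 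Substituting $i=0$ gives $(-s_{10}s_{11})^{0}s_{00}\,s_{k,4-k}=s_{00}\,s_{k,4-k}>0$, matching the other shapes; equivalently, the deforming tangency always contributes the corner sign $s_{00}$, whether it is the transversal intersection through $(\overline{p_{00}p_{10}p_{01}})^\vee$ occurring for (K), (U), $\dots$ or the non-transversal intersection on $(\overline{p_{10}p_{11}})^\vee$ occurring for (G).

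It then remains to treat the $(x\,y)$-images. Here I would use that the transversal tangency at $E^\vee$ does not move during the deformation, so the common factor $s_{k,4-k}$ is untouched; the symmetry acts only on the deforming tangency, replacing $(-s_{10}s_{11})^{0}s_{00}$ by $(-s_{01}s_{11})^{0}s_{00}=s_{00}$. Hence every shape in the class, in identity or in $(x\,y)$-position, produces the single real lifting condition $s_{00}\,s_{k,4-k}>0$, which is what the statement asserts. The main obstacle is exactly this reduction for shape (G): unlike the (E F J) case one cannot simply point to equal table entries, and one must argue from the combinatorial position of (G) that $i=0$, while being careful to apply the $(x\,y)$-symmetry only to the deforming tangency and never to the fixed factor $s_{k,4-k}$ (which would wrongly turn $s_{04}$ into $s_{40}$ in the $k=0$ case).
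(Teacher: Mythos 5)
Your overall strategy coincides with the paper's: list the entries of Table \ref{tab:tab11} for all shapes in the class, note that (K), (T), (T'), (T''), (U), (U'), (V) all read $s_{00}s_{k,4-k}>0$, reduce the (G)-entry to this same expression by showing $i=0$, and treat the two blue edges $\overline{p_{11}p_{22}}$, $\overline{p_{11}p_{04}}$ and the $(x\,y)$-images separately (your warning that the sign attached to the blue edge must not be permuted, so that $s_{04}$ never becomes $s_{40}$, is exactly the paper's remark that the blue edge is the same lattice edge in all cases). The conditions you end up with are the correct ones.

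However, the justification of your decisive step, $i=0$ for shape (G), is wrong. You assert that $\overline{p_{10}p_{11}}$ ``sits against the corner triangle $\overline{p_{00}p_{10}p_{01}}$'' and that the tangency of (K), (U), \dots{} passes through $(\overline{p_{00}p_{10}p_{01}})^\vee$. First, $\overline{p_{00}p_{10}p_{01}}$ does not contain the edge $\overline{p_{10}p_{11}}$, so it can never be the triangle whose apex determines $i$: that index is the $y$-coordinate of the apex $p_{0i}$ of the triangle of $\cT$ with base $\overline{p_{10}p_{11}}$ and apex on the $y$-axis. Second, in this deformation class the corner triangle is not in $\cT$ at all: the presence of (U), (T), (T') forces the unit square $p_{00}p_{10}p_{11}p_{01}$ to be split by the diagonal $\overline{p_{00}p_{11}}$, since their tangency is a transversal multiplicity-$2$ crossing of the diagonal ray with a curve edge, which must be dual to an edge of direction $(1,1)$, namely $\overline{p_{00}p_{11}}$ (the curve edge dual to $\overline{p_{10}p_{01}}$ is parallel to the diagonal ray, so it cannot carry such a crossing); correspondingly (K) passes through the vertex dual to $\overline{p_{00}p_{10}p_{11}}$. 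It is this forced triangle $\overline{p_{00}p_{10}p_{11}}$ (resp.\ $\overline{p_{00}p_{01}p_{11}}$ on the $(x\,y)$-side) that yields $i=0$, which is also what the paper's proof states when it says the vertex relevant to $i$ is $p_{00}$. Your criterion, taken literally, describes the opposite configuration: when $\overline{p_{00}p_{10}p_{01}}$ is present, the triangle over $\overline{p_{10}p_{11}}$ is $\overline{p_{01}p_{10}p_{11}}$ and one gets $i=1$ --- which is precisely the class (G I N)$+(x\,y)$ that you cite as the contrasting case. So, applied consistently, your argument would assign $i=1$ here, the (G)-condition would become $-s_{10}s_{11}s_{01}s_{k,4-k}>0$, and the claimed collapse to $s_{00}s_{k,4-k}>0$ would fail. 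The conclusion survives, but the combinatorial reason for it must be corrected as above.
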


\begin{proof} Figure \ref{fig:subdiv(G)to(K)to(T)to(U)to(V)} shows the two dual deformation motifs of the deformation class. If the dual deformation motif contains the edge $\overline{p_{11}p_{04}}$, we obtain the following list of lifting conditions from Table \ref{tab:tab11} for the bitangent shapes in the deformation class:
	\medskip 
	
	\begin{tabular}{cccc}
		shape  & permutation & lifting condition& parameters\\
		\hline
		(G)  &identity &  $(-s_{10}s_{11})^{i}s_{0i}s_{04}>0$ & $i=$ $y$-coor. of vertex at $x=0$ \\ &&  & forming a triangle with $\overline{p_{10}p_{11}}$\\
		&&  & $p_{04}$ vertex of edge $\overline{p_{11}p_{k,4-k}}$\\% vertex of the blue edge\\
		\hline
		(K),(T),(U),(V)  &identity & $ s_{00}s_{04}>0$ & $p_{04}$ vertex of edge $\overline{p_{11}p_{k,4-k}}$ \\
		\hline
		(K),(T),(U),(V)  &$(x\,y)$ & $ s_{00}s_{04}>0$ & $p_{04}$ vertex of edge $\overline{p_{11}p_{4-k,k}}$ \\
		\hline
		(G)  &$(x\,y)$ &  $(-s_{01}s_{11})^{i}s_{i0}s_{04}>0$ & $i=$ $x$-coor. of vertex at $y=0$\\
		 &&  &  forming a triangle with $\overline{p_{01}p_{11}}$,\\
		&&  & $p_{04}$ vertex of $\overline{p_{11}p_{4-k,k}}$\\
		
	\end{tabular}

\medskip 

	The value of $k$ changes when we consider the $(x\,y)$ permutation. However, the (blue) edge $\overline{p_{11}p_{04}}$ in the subdivision stays the same in all cases. Since the vertex relevant to the value of $i$ is $p_{00}$ in both cases, we substitute $i=0$ and obtain the real lifting condition 	$s_{00}s_{04}>0$ 	for all shapes in this deformation class.
	
 If the dual deformation motif contains the edge $\overline{p_{11}p_{22}}$, the situation is analogous:

\medskip 
\begin{tabular}{cccc}
	shape  & permutation & lifting condition& parameters\\
	\hline
	(G)  &identity &  $(-s_{10}s_{11})^{i}s_{0i}s_{22}>0$ & $i=$ $y$-coor. of vertex at $x=0$ \\ &&  & forming a triangle with $\overline{p_{10}p_{11}}$,\\
	&&  &  $p_{22}$ vertex of edge $\overline{p_{11}p_{k,4-k}}$\\% vertex of the blue edge\\
	\hline
	(K),(T),(U),(V)  &identity & $ s_{00}s_{22}>0$ & $p_{22}$ vertex of edge $\overline{p_{11}p_{k,4-k}}$ \\
	\hline
	(K),(T),(U),(V)  &$(x\,y)$ & $ s_{00}s_{22}>0$ & $p_{22}$ vertex of edge $\overline{p_{11}p_{4-k,k}}$ \\
	\hline
	(G)  &$(x\,y)$ &  $(-s_{01}s_{11})^{i}s_{i0}s_{22}>0$ & $i=$ $x$-coor. of vertex at $y=0$\\
	&&  & forming a  triangle with $\overline{p_{01}p_{11}}$,\\
	&&  & $p_{22}$ vertex of $\overline{p_{11}p_{4-k,k}}$\\
	
\end{tabular}
\medskip 

As before, $i=0$ and the real lifting condition is $s_{00}s_{22}>0$ for all bitangent shapes in the deformation class. Thus, we can conclude for both dual deformation motifs in Figure~\ref{fig:subdiv(G)to(K)to(T)to(U)to(V)}  that the real lifting conditions are independent of the shapes. 
\end{proof}

We now focus  on the special case of deformation class (C). In \cite{CueMa20}, the lifting conditions for bitangent class (C) are computed for generic tropical quartics satisfying the following condition: 
If $\Gamma$ contains a vertex $v$ adjacent to three bounded edges with directions $-e_1$, $-e_2$
and $e_1 + e_2$, then there exists a unique shortest edge. 
 The vertex of a tropical bitangent of shape (C) coincides with $v$. 
Cueto and Marking chose the edge with direction $-e_2$ as shortest edge. Any generic tropical quartic having a bitangent class of shape (C) at a vertex $v$, but with different edge lengths, can be brought into this position by applying an action of $S_3$. This  changes the dual subdivision accordingly and, as consequence, also the formula for the real lifting conditions of (C). 
We illustrate this in the following example. 

\begin{exa}\label{exa:C}
	We consider the two smooth tropical quartic curves dual to the triangulation $\cT$  shown in Figure \ref{fig:exC}.  These quartic curves have a bitangent class of shape (C). We denote with $\lambda_1$, $\lambda_2$ and $\lambda_3$ the lattice lengths of the edges adjacent to the vertex that forms the bitangent class of shape (C) with direction $-e_2$ , $-e_1$ and $e_1+e_2$, respectively. For the tropical curve in Figure \ref{fig:exaC1}, these lengths satisfy what we call the \emph{identity case} of the genericity condition: $\lambda_1<\lambda_2\leq \lambda_3$.  We substitute $i=2,\, j=1, \, k=2$ in the real lifting condition for (C) in Table \ref{tab:tab11} obtaining
\begin{equation} \label{eq:gencase}
	-s_{11}s_{21}s_{02}s_{10}>0 \text{ and } -s_{21}s_{11}s_{22}s_{10}>0.
\end{equation}

	\begin{figure}[h]
		\centering
		\begin{subfigure}[b]{.20\textwidth}
			\centering	\includegraphics[width=0.9\textwidth]{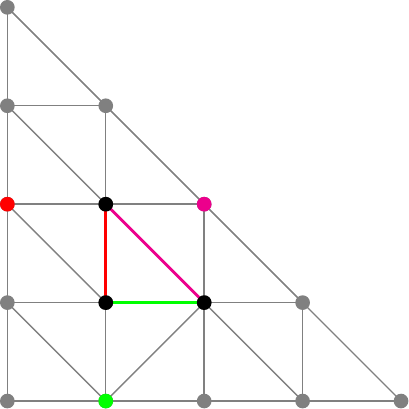}
			\caption{Triangulation $\cT$}\label{fig:exaC1subdiv}
		\end{subfigure}\quad 
		\begin{subfigure}[b]{.25\textwidth}
			\centering
			\includegraphics[width=0.9\textwidth]{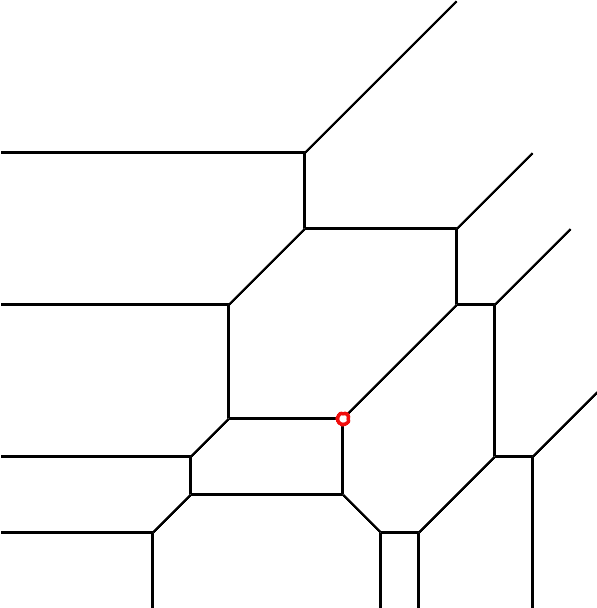}
			\caption{Generic curve in identity position.}\label{fig:exaC1}
		\end{subfigure} \quad
	\begin{subfigure}[b]{.25\textwidth}
		\centering
		\includegraphics[width=0.9\textwidth]{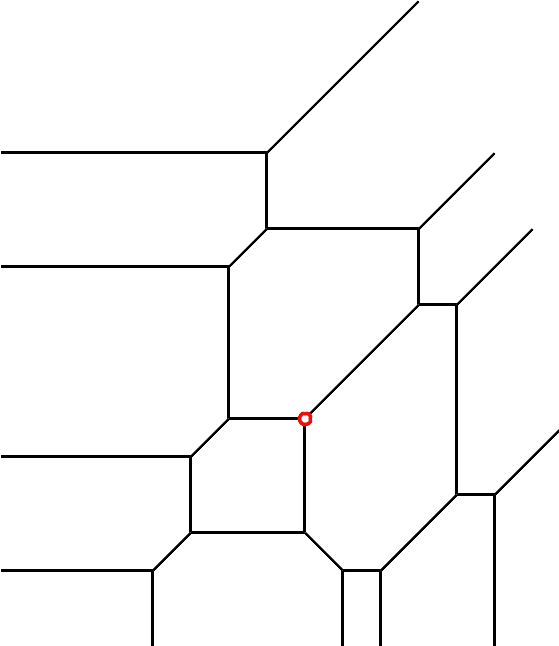}
		\caption{Generic curve not in identity position.}\label{fig:exaC2}
	\end{subfigure}
		\caption{ Smooth tropical quartic with bitangent class of shape (C).}\label{fig:exC}
	\end{figure}
By choosing a different weight vector in $\Sigma(\cT)$, we can deform the edge lengths such that $\lambda_2<\lambda_1\leq \lambda_3$. An example of this is shown in Figure \ref{fig:exaC2}. In this case, we are no longer in the identity case, so in order to apply the lifting formula, we need to apply the action of $(x\, y)$ to switch the lengths $\lambda_1$ and $\lambda_2$, inducing also an action on the triangulation $\cT$. The image of the curve and of $\cT$ under $(x\, y)$ is depicted in Figure \ref{fig:exaC3}. Now, we have to substitute $i=1,\, j=2, \, k=2$ in the lifting conditions for shape (C) obtaining

	\begin{figure}[h]
		\centering
		\begin{subfigure}[b]{.25\textwidth}
			\centering
			\includegraphics[width=0.725\textwidth]{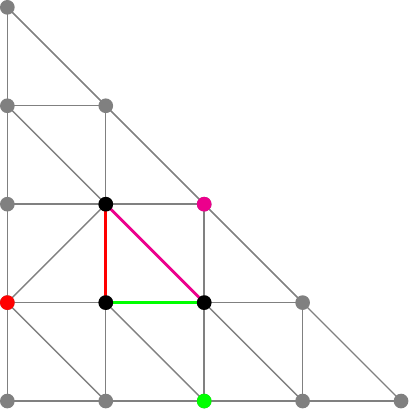}
			\caption{Image of Figure~\ref{fig:exaC1subdiv} under $(x\,y)$.}
		\end{subfigure}
		\begin{subfigure}[b]{.45\textwidth}
			\centering
			\includegraphics[width=0.55\textwidth]{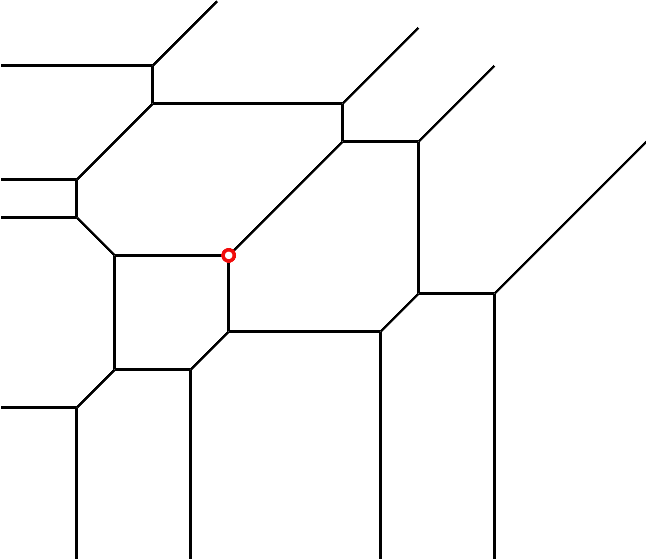}
			\caption{Image of Figure \ref{fig:exaC2} under $(x\,y)$.}
		\end{subfigure}
		\caption{The $(x\,y)$-transformation of the tropical curve in Figure \ref{fig:exaC2} and its dual triangulation. }\label{fig:exaC3}
	\end{figure}

\begin{align*}
	-s_{11}s_{12}s_{01}s_{20}>0 \text{ and } s_{22}s_{20}>0.
\end{align*} 
We then deduce  the lifting conditions for the original quartic with $\lambda_2<\lambda_1\leq\lambda_3$ by applying $(x\,y)^{-1}=(x\,y)$: 
\begin{equation}\label{eq:nongencase} -s_{11}s_{21}s_{10}s_{02}>0 \text{ and } s_{22}s_{02}>0.
\end{equation} 	
The second inequalities in (\ref{eq:gencase}) and (\ref{eq:nongencase}) are different. However, we observe that the first inequality $-s_{11}s_{21}s_{10}s_{02}>0$ is true if and only if $s_{02}=-s_{11}s_{21}s_{10}$. Substituting this equation into the second inequality, we see that the real lifting conditions are equivalent.
\end{exa}

\begin{prop}\label{prop:C}
	Let $\Gamma$ be a smooth tropical plane quartic curve 
	with dual triangulation $\cT$ and a bitangent class $B$ of shape (C). For every $c\in\Sigma(\cT)$,  the real lifting conditions of $B_c$ in $\Gamma_c$ are equivalent.
\end{prop}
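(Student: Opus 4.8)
The plan is to extend the computation of Example~\ref{exa:C} from the particular triangulation treated there to an arbitrary $\cT$ carrying a shape~(C) bitangent class. First I would record that, by Lemma~\ref{lem:constantdefclasses}, shape~(C) is the unique shape in its deformation class, so that for every $c\in\Sigma(\cT)$ the class $B_c$ is again of shape~(C), with its vertex pinned to the vertex $v$ of $\Gamma_c$ adjacent to the three bounded edges of directions $-e_1$, $-e_2$ and $e_1+e_2$. The only feature of $\Gamma_c$ that varies inside $\Sigma(\cT)$ and that affects which instance of the formula in Table~\ref{tab:tab11} applies is the relative ordering of the lattice lengths $\lambda_1,\lambda_2,\lambda_3$ of these three edges; within any region where this ordering is fixed, the chosen formula and its output are constant. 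By the genericity assumption the shortest edge is unique, so $\Sigma(\cT)$ splits into finitely many such regions, one for each admissible position of the unique shortest edge.

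Next I would reduce every region to the identity case $\lambda_1<\lambda_2\le\lambda_3$ used by Cueto and Markwig. For each position of the shortest edge I would fix an element $\sigma\in S_3$ carrying it to the $-e_2$ direction (for instance $\sigma=(x\,y)$ when $\lambda_2$ is shortest, as in Example~\ref{exa:C}), apply $\sigma$ to both $\Gamma_c$ and $\cT$, read off the lifting conditions of the transformed, identity-positioned curve from Table~\ref{tab:tab11} with the parameters $i,j,k$ now dictated by $\sigma(\cT)$, and finally pull the resulting inequalities back by $\sigma^{-1}$ so that they are re-expressed in the original signs $s_{ij}$. Carrying this out for each admissible position produces, for every $c$, an explicit pair of sign inequalities.

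It then remains to show that the pairs of inequalities coming from different regions are logically equivalent, which I would do exactly as in Example~\ref{exa:C}. In every case one of the two inequalities coincides, namely the common inequality $-s_{11}s_{21}s_{10}s_{02}>0$ appearing in both \eqref{eq:gencase} and \eqref{eq:nongencase}, and this inequality is equivalent to the sign equation $s_{02}=-s_{11}s_{21}s_{10}$; substituting this equation into the second, syntactically different inequality of any region turns it into the second inequality of any other region. The main obstacle I expect is the sign bookkeeping under the $S_3$-action: each $\sigma$ relabels the coefficients $a_{ij}$, hence the signs $s_{ij}$, and these relabelings together with the changed parameters $i,j,k$ must be tracked precisely so that, after pulling back by $\sigma^{-1}$, all conditions are written in the same variables before the elementary substitution argument is applied.
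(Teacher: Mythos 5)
Your overall strategy is exactly the paper's: partition $\Sigma(\cT)$ according to which of the three edges at the vertex is the unique shortest one, move each region into identity position by a suitable $\sigma\in S_3$ (the paper economizes by treating only the generator cases $\sigma=(x\,y)$ and $\sigma=(x\,z)$, which suffice), read off Table~\ref{tab:tab11} for the transformed curve with the parameters dictated by $\sigma(\cT)$, pull back by $\sigma^{-1}$, and then prove that the pairs of inequalities attached to different regions are equivalent by using the coinciding first inequality to rewrite a sign and substituting into the second. Up to this point you are reconstructing the paper's proof.

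However, your execution of the final equivalence step contains a genuine error, together with a missing observation. The error: you assert that \emph{in every case} the coinciding inequality is $-s_{11}s_{21}s_{10}s_{02}>0$, i.e.\ the one from Example~\ref{exa:C}. That inequality is specific to the parameters $(i,j,k)=(2,1,2)$ of that example. For a general triangulation the common first inequality depends on $i,j,k$; for instance, when $i=j=2$ the first inequalities are $s_{02}s_{20}>0$, which does not involve $s_{11}$ at all, so the sign relation $s_{02}=-s_{11}s_{21}s_{10}$ that you propose to substitute is simply not available. The correct generalization, which the paper carries out explicitly, is a case analysis over $i,j\in\{1,2,3\}$ (the cases $i,j\in\{1,3\}$; $i=j=2$; $j=2,\,i\in\{1,3\}$; $j\in\{1,3\},\,i=2$), verifying in each case that the first inequalities agree and that the sign identity they encode converts one second inequality into the other. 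The missing observation is what delimits and legitimizes this case analysis: Table~\ref{tab:tab11} only states conditions for shape (C) when $j\in\{1,2,3\}$, and after applying $(x\,y)$ the roles of $i$ and $j$ swap, so one needs both $i$ and $j$ in $\{1,2,3\}$. The paper secures this by noting that a quartic whose edge lengths can be deformed, within the same secondary cone, between two different orderings of $\lambda_1,\lambda_2,\lambda_3$ must have $i,j\in\{1,2,3\}$, since otherwise the genericity condition fails; no comparison between regions is needed in the remaining situations. Without this remark, your recipe would, for some triangulations, call for an entry of Table~\ref{tab:tab11} that does not exist.
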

\begin{proof}
We fix the following notation, see also Figure \ref{fig:subdivC}: $i$ is the $y$-coordinate of the  vertex $p_{0i}$, which forms a triangle with the (red) edge $\overline{p_{11}p_{12}}$, $j$ is the $x$-coordinate of the vertex $p_{j0}$, which forms a triangle with the (green) edge $\overline{p_{11}p_{21}}$ and $k$ is the $x$-coordinate of the vertex $p_{k,4-k}$, which forms a triangle with the (pink) edge $\overline{p_{12}p_{21}}$. 
\begin{figure}[h]
	\centering
	\includegraphics[width=0.15\textwidth]{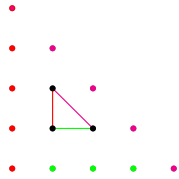}
	\caption{The dual deformation motif to shape (C) in identity position.}\label{fig:subdivC}
\end{figure}

We compute the real lifting conditions of shape (C)  for a bitangent class not in identity position. 
Suppose that $\Gamma_c$ has a unique shortest edge among $\lambda_1$, $\lambda_2$, $\lambda_3$. Then there exists $\sigma\in S_3$ such that for  $\sigma(\Gamma)$ the lattice lengths of the edges adjacent to $\sigma(B)$ satisfy $\lambda_1<\lambda_2\leq\lambda_3$. This corresponds to $\sigma(B)$ being in identity position. We can then determine the real lifting condition for $\sigma(B)$ using Table \ref{tab:tab11} and the parameters from $\sigma(\cT)$. In order to do this, we first need to look at the images of the three lattice points $p_{0i}$, $p_{j0}$ and $p_{k,4-k}$ under $\sigma$. Their images will lie in the boundary of $4\Delta_2$: $\sigma(p_{0i}),\,\sigma(p_{j0}),\,\sigma(p_{k,4-k})\in\{p_{0\tilde{i}},\,p_{\tilde{j}0},\,p_{\tilde{k},4-\tilde{k}} \}.$  Secondly, we substitute the values of the tilde indices into the lifting conditions, and then apply $\sigma^{-1}$ to obtain the real lifting conditions of $B=\sigma^{-1}(\sigma(B))$ in $\Gamma=\sigma^{-1}(\sigma(\Gamma))$.
Finally, we have to compare the lifting conditions of the bitangent class $B$ of shape (C) in $\Gamma$ with the ones of $B_c$ of shape (C) in $\Gamma_{c}$ where $c\in\Sigma(\cT)$ such that the dual deformation motif of $(\Gamma_{c},B_{c})$ is in identity position and $\Gamma_{c}$ satisfies $\lambda_1<\lambda_2\leq\lambda_3$. If the conditions are equivalent, we have proven that the real lifting conditions do not change.
Since $S_3$ is generated by $(x\,y)$ and $(x\,z)$, it suffices to check the  cases  $\lambda_2<\lambda_1\leq\lambda_3$ and $\lambda_3<\lambda_2\leq\lambda_1.$
\\
If the tropical quartic satisfies $\lambda_2<\lambda_1\leq\lambda_3$, we apply $\sigma=(x\,y)$ to obtain a generic representative of the identity position. 
Now, $\sigma(p_{0i})=p_{i0}$ and $\sigma(p_{j0})=p_{0j}$ and $\sigma(p_{k,4-k})=p_{4-k,k}$, so we have to substitute 
\begin{align*}
	i \mapsto j,\,\,
	j \mapsto i,\,\,
	k \mapsto 4-k
\end{align*}
in the real lifting conditions in Table \ref{tab:tab11}. Here the values of $i$, $j$, $k$ are as in the quartic we started with. Thus, the real lifting conditions for $\sigma(B)$ are 
\begin{center}
	\begin{tabular}{ccc}
		$i=1,3 $& $(-s_{11})^{j+1}s_{12}^{j} s_{21}s_{0j}s_{i0}>0$ 		& $(-s_{21})^{4-k+1}s_{12}^{4-k}s_{11}s_{4-k,k}s_{i0}>0$,\\
		$ i=2$ &$ (-s_{11}s_{12})^{j}s_{0j}s_{20}>0$		& $(-s_{12}s_{21})^{4-k}s_{4-k,k}s_{20}>0$.
	\end{tabular}
\end{center}
We now apply $(x\,y)^{-1}=(x\,y)$. Note that this acts on  the indices of the signs but not on the exponents. We obtain the conditions
\begin{center}
	\begin{tabular}{ccc}
		$i=1,3 $& $(-s_{11})^{j+1}s_{21}^{j} s_{12}s_{j0}s_{0i}>0$ & $(-s_{12})^{4-k+1}s_{21}^{4-k}s_{11}s_{k,4-k}s_{0i}>0$,\\
		$ i=2$ &$ (-s_{11}s_{21})^{j}s_{j0}s_{02}>0$ 		& $(-s_{21}s_{12})^{4-k}s_{k,4-k}s_{02}>0.$	\\
	\end{tabular}	
\end{center}

Now, we suppose that we have a quartic for which we can, by edge length changes, switch between the cases $\lambda_1<\lambda_2\leq\lambda_3$ and $\lambda_2<\lambda_1\leq\lambda_3$. For such a quartic the dual triangulation must satisfy $i,j\in\{1,2,3\}$, otherwise the genericity condition is not satisfied. We compare the real lifting conditions for the different cases after simplifying some exponents:
\begin{center}
	\begin{tabular}{c|c|c}
		& $\lambda_1<\lambda_2\leq\lambda_3$ & $\lambda_2<\lambda_1\leq\lambda_3$\\
		\hline
		$i,j \in{1,3}$ & $s_{12} s_{21}s_{0i}s_{j0}>0$ & $s_{21} s_{12}s_{j0}s_{0i}>0$\\
		& $(-s_{21})^{k+1}s_{12}^ks_{11}s_{k,4-k}s_{j0}>0$& $(-s_{12})^{k+1}s_{21}^{k}s_{11}s_{k,4-k}s_{0i}>0$ \\
		\hline
		$i=j=2$ &  $ s_{02}s_{20}>0$ & $ s_{20}s_{02}>0$\\
		& $(-s_{12}s_{21})^ks_{k,4-k}s_{20}>0$&$(-s_{21}s_{12})^{k}s_{k,4-k}s_{02}>0$ \\
		\hline
		$j=2, i\in\{1,3\}$ & $-s_{11}s_{12}s_{0i}s_{20}>0$& $-s_{11} s_{12}s_{20}s_{0i}>0$\\
		& $(-s_{12}s_{21})^ks_{k,4-k}s_{20}>0$ & $(-s_{12})^{k+1}s_{21}^{k}s_{11}s_{k,4-k}s_{0i}>0$\\
		\hline
		$j\in\{1,3\}, i=2$& $-s_{11} s_{21}s_{02}s_{j0}>0$ & $ -s_{11}s_{21}s_{j0}s_{02}>0$\\
		& $(-s_{21})^{k+1}s_{12}^ks_{11}s_{k,4-k}s_{j0}>0$&  $(-s_{21}s_{12})^{k}s_{k,4-k}s_{02}>0$\\
	\end{tabular}
\end{center}
We see that in each case the first inequalities are the same. The second inequalities differ, but it can be shown that they are equivalent by taking the first inequalities into account. 
It follows that for the edge length change between $\lambda_1<\lambda_2\leq\lambda_3$ and $\lambda_2<\lambda_1\leq\lambda_3$ the real lifting conditions for shape (C) do not change.

The last step to complete the proof is to consider $\Gamma$ such that $\lambda_1<\lambda_3\leq\lambda_2$. To obtain the real lifting conditions for (C), we have to apply $\sigma=(x\,z)$ to the subdivision and the curve to obtain a generic representative of the identity position.
We apply $\sigma$ to the lattice points $p_{0i}$, $p_{j0}$ and $p_{k,4-k}$ to obtain the values that we have to substitute in the real lifting conditions for $\sigma(B)$ and we obtain
\begin{align*}
	i \mapsto 4-k,\,\,
	j \mapsto 4-j,\,\,
	k \mapsto 4-i.
\end{align*}
So the lifting conditions for $\sigma(B)$ are given as \\
\begin{center}
	\begin{tabular}{ccc}
		$4-j=1,3$ & $(-s_{11})^{4-k+1}s_{12}^{4-k} s_{21}s_{0,4-k}s_{4-j,0}>0$ &  $(-s_{21})^{4-i+1}s_{12}^{4-i}s_{11}s_{4-i,i}s_{4-j,0}>0$,\\
		$4-j=2$ & $(-s_{11}s_{12})^{4-k}s_{0,4-k}s_{20}>0$ 	&	 $(-s_{12}s_{21})^{4-i}s_{4-i,i}s_{20}>0.$
	\end{tabular}
\end{center}
Applying $(x\,z)^{-1}=(x\,z)$ to these gives the real lifting conditions of $B$ when $\lambda_1<\lambda_3\leq\lambda_2$:

\begin{center}
	\begin{tabular}{ccc}
		$j=1,3$ & $(-s_{21})^{k+1}s_{12}^{k} s_{11}s_{k,4-k}s_{j0}>0$ &  $(-s_{11})^{i+1}s_{12}^{i}s_{21}s_{0i}s_{j0}>0$,\\
		$j=2$ & $(-s_{21}s_{12})^{k}s_{k,4-k}s_{20}>0$ 	&	 $(-s_{12}s_{11})^{i}s_{0i}s_{20}>0$.
	\end{tabular}
\end{center}

%Now we have to prove that changing the edge lengths between the two cases $\lambda_1<\lambda_2\leq \lambda_3$ and  $\lambda_1<\lambda_3\leq \lambda_2$ does not change the lifting conditions:
%\begin{center}
%	\begin{tabular}{c|c|c}
%		& $\lambda_1<\lambda_2\leq\lambda_3$ & $\lambda_1<\lambda_3\leq\lambda_2$\\
%		\hline
%		$j \in{1,3}$ &  $(-s_{11})^{i+1}s_{12}^i s_{21}s_{0i}s_{j0}>0$ &  $(-s_{21})^{k+1}s_{12}^{k} s_{11}s_{k,4-k}s_{j0}>0$\\
%		& $(-s_{21})^{k+1}s_{12}^ks_{11}s_{k,4-k}s_{j0}>0$&  $(-s_{11})^{i+1}s_{12}^{i}s_{21}s_{0i}s_{j0}>0$ \\
%		\hline
%		$j=2$ & $(-s_{11}s_{12})^is_{0i}s_{20}>0$ &$(-s_{21}s_{12})^{k}s_{k,4-k}s_{20}>0$\\
%		& $(-s_{12}s_{21})^ks_{k,4-k}s_{20}>0$& $(-s_{12}s_{11})^{i}s_{0i}s_{20}>0$ 	
%	\end{tabular}
%\end{center}
Similar comparisons as before show us that the lifting conditions are the same as for  $\lambda_1<\lambda_2\leq \lambda_3$.
\end{proof}

\begin{theorem}\label{theorem:lifting}
Let $\Gamma$ be  a generic tropical smooth quartic curve with dual triangulation $\cT$, and let $B$ be a tropical bitangent class.  For every $c\in\Sigma(\cT)$, the real lifting conditions of $B_c$ in $\Gamma_c$ are independent on the shape of the bitangent class.  In other words: real lifting conditions of tropical bitangent classes only depend on the dual subdivision $\cT$ of $\Gamma$.
\end{theorem}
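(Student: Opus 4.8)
The plan is to reduce Theorem~\ref{theorem:lifting} to the case-by-case analysis already carried out for individual deformation classes. By Theorem~\ref{thm:classification}, every tropical bitangent class of a generic smooth tropical quartic belongs, up to $S_3$-symmetry, to one of the $24$ deformation classes whose dual deformation motifs are listed in Figure~\ref{fig:defclasses}. Since the real lifting conditions transform equivariantly under the $S_3$-action on the coefficient signs $s_{ij}$ (the action permutes indices but preserves the validity of the sign inequalities), it suffices to verify the statement for a single orbit representative of each deformation class. Thus the theorem follows once we check, for each of the $24$ representatives, that the real lifting conditions coming from Table~\ref{tab:tab11} agree for all shapes appearing in that deformation class.

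\textbf{Key steps.}
First I would dispose of the deformation classes of constant shape, namely those collected in Figure~\ref{fig:subdivnochange}: by Lemma~\ref{lem:constantdefclasses} the bitangent shape does not vary within these classes, so there is nothing to compare and the lifting conditions are trivially independent of $c\in\Sigma(\cT)$. Next I would invoke the two lemmas already proved for the nonconstant classes whose motifs appear in Figures~\ref{fig:subdiv(E)to(F)to(J)} and~\ref{fig:subdiv(G)to(K)to(T)to(U)to(V)}: Lemma~\ref{prop:signsEFJ} handles the deformation class (E~F~J), and Lemma~\ref{prop:signsGKUTV} handles (G~K~U~U'~T~T'~T''~V)$+(x\,y)$. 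The essential observation, already exploited there, is that the parameters $i$, $v$ (and, where relevant, $j$, $k$) entering the formulas in Table~\ref{tab:tab11} are determined by the fixed triangulation $\cT$ and not by the edge lengths, so substituting these fixed values into the lifting formulas yields one and the same sign condition for every shape in the class. I would then treat the remaining nonconstant deformation classes from the proof of Theorem~\ref{thm:classification} (the classes (B~H'~H), (B~H'~H)$+(y\,z)$, (B~M)$+(y\,z)$, (D~L'~Q), (D~L'~Q~Q'~R), (D~L~O), (G~I~N)$+(x\,y)$, (G~K~U~T~T'), (W~X~Y~EE~GG), and (W\dots HH)$+(x\,z)$) by the identical mechanism: for each shape one reads off its lifting condition from Table~\ref{tab:tab11} with the $\cT$-determined parameter values, and one checks the resulting inequalities coincide or become equivalent after simplification.

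\textbf{Main obstacle.}
The genuinely delicate case is the deformation class (C), which I would isolate and dispatch using Proposition~\ref{prop:C}. Here the difficulty, explained in Example~\ref{exa:C}, is that the lifting conditions of shape (C) in Table~\ref{tab:tab11} are stated only for the \emph{identity case} $\lambda_1<\lambda_2\leq\lambda_3$ of the genericity condition, and moving within $\Sigma(\cT)$ can permute which of the three edges adjacent to the vertex is shortest; one must then conjugate by the appropriate $\sigma\in S_3$, substitute the transformed parameters, and conjugate back. The resulting second inequalities are literally different strings of signs, so \emph{independence is not visible termwise}. The hard part is verifying that these distinct inequalities are nonetheless logically equivalent once the first inequalities are taken into account: as in Example~\ref{exa:C}, one uses the first inequality to solve for one sign $s_{ij}$ in terms of the others and substitutes it into the second, showing the two conditions cut out the same sign locus. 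Once Proposition~\ref{prop:C} supplies this equivalence for (C) and the lemmas above cover all other classes, the theorem is complete.

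\begin{proof}
By Theorem~\ref{thm:classification}, up to the $S_3$-action every bitangent class of $\Gamma$ lies in one of the $24$ deformation classes whose dual deformation motifs are depicted in Figure~\ref{fig:defclasses}. Because the $S_3$-action permutes the sign variables $s_{ij}$ while preserving the validity of the sign inequalities in Table~\ref{tab:tab11}, it is enough to prove the claim for one representative of each deformation class. For the deformation classes with constant shape, collected in Figure~\ref{fig:subdivnochange}, Lemma~\ref{lem:constantdefclasses} shows that the shape does not vary with $c\in\Sigma(\cT)$, so the real lifting conditions are trivially independent of the shape.

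For the nonconstant deformation classes we argue as in Lemmas~\ref{prop:signsEFJ} and~\ref{prop:signsGKUTV}. The key point is that the integer parameters $i$, $v$, $j$, $k$ appearing in the formulas of Table~\ref{tab:tab11} are determined solely by the fixed triangulation $\cT$: they record the coordinates of lattice points forming triangles with prescribed edges of the dual bitangent motif, and these combinatorial data do not change under a deformation of the edge lengths within $\Sigma(\cT)$. Substituting these $\cT$-determined values into the lifting formulas therefore produces the same sign condition for every shape occurring in the deformation class. Lemma~\ref{prop:signsEFJ} establishes this for the class (E~F~J) and Lemma~\ref{prop:signsGKUTV} for (G~K~U~U'~T~T'~T''~V)$+(x\,y)$; the remaining nonconstant classes listed in the proof of Theorem~\ref{thm:classification} are handled by the same reading-off-and-comparing of the conditions from Table~\ref{tab:tab11} with the fixed parameters, and in each case the inequalities coincide.

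Finally, the deformation class (C) is the exceptional case, since the lifting conditions in Table~\ref{tab:tab11} are formulated only for the identity case $\lambda_1<\lambda_2\leq\lambda_3$ of the genericity condition, and a deformation within $\Sigma(\cT)$ may change which adjacent edge is shortest. Proposition~\ref{prop:C} treats precisely this situation: one conjugates by the appropriate $\sigma\in S_3$ bringing the curve into identity position, substitutes the transformed parameters into Table~\ref{tab:tab11}, and conjugates back. Although the resulting second inequalities differ from those of the identity case as formal sign expressions, they are shown to be equivalent once the first inequalities are used to eliminate a sign variable. Hence the real lifting conditions of shape (C) are independent of $c\in\Sigma(\cT)$ as well. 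Combining these cases over all $24$ deformation classes, the real lifting conditions of any bitangent class $B_c$ in $\Gamma_c$ depend only on the dual triangulation $\cT$.
\end{proof}
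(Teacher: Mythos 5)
Your proposal is correct and takes essentially the same route as the paper's own proof: reduction to the $24$ deformation classes, constant-shape classes handled by Lemma~\ref{lem:constantdefclasses}, the classes (E~F~J) and (G~K~U~U'~T~T'~T''~V)$+(x\,y)$ by Lemmas~\ref{prop:signsEFJ} and~\ref{prop:signsGKUTV}, shape (C) by Proposition~\ref{prop:C}, and the remaining classes by substituting the $\cT$-determined parameters into Table~\ref{tab:tab11} --- exactly the substitutions the paper tabulates ($j=4$ for (B~H'~H), $j=3$ for (B~M)$+(y\,z)$, $i=0$ or $i=1$ for the (D)- and (G)-classes, and no conditions at all for the (W)-classes). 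The one imprecision is your blanket claim that the constant-shape classes of Figure~\ref{fig:subdivnochange} are \emph{trivially} done: (C) belongs to that collection, yet constancy of its shape does not settle its lifting conditions, since which formula of Table~\ref{tab:tab11} applies depends on the ordering of the adjacent edge lengths rather than on $\cT$ alone; your explicit carve-out of (C) via Proposition~\ref{prop:C} repairs this, so the argument stands.
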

\begin{proof}
	We prove this by going through all deformation classes not considered in Lemma~ \ref{prop:signsEFJ} and \ref{prop:signsGKUTV} and Proposition \ref{prop:C}  and by comparing the lifting conditions of the different shapes. The shapes in  the deformation classes (W X Y EE GG) and (W ... HH)  have no real lifting conditions, so the statement holds. 	The remaining deformation classes are summarized below. 

	\begin{longtable}{m{5cm}|m{8.5cm}}		
		\hline\multicolumn{2}{c}{\textbf{(B H' H), (B H' H)$+(y\,z)$, (B M)$+(y\,z)$ } }\\
		\hline 
		\multicolumn{2}{l}{
			(B): \hspace{0.8cm}$(-s_{1v}s_{1,v+1})^{i+1} s_{0i}s_{21}>0$ and $ (-s_{21})^{j+1}s_{31}^js_{1v}s_{1,v+1}s_{j0}>0$}\\
		\multicolumn{2}{l}{	(H),(H'):  $(-s_{1v}s_{1,v+1})^{i+1} s_{0i}s_{21}>0 $ and $-s_{21}s_{1v}s_{1,v+1}s_{40}>0$	
		}\\
		\multicolumn{2}{l}{	(M): \hspace{0.6cm} $(-s_{1v}s_{1,v+1})^{i+1} s_{0i}s_{21}>0$ and $s_{31}s_{1v}s_{1,v+1}s_{30}>0$	
		}\\
		\hline\multicolumn{2}{c}{\textbf{(B H' H)} }\\
		\hline
		\includegraphics[width = 0.15\textwidth]{figures/defclass/_B_-_H__-_H_.pdf} & 
		The dual bitangent motif of (B) yields $j=4$. Substituting it gives the same real lifting conditions for (B), (H') and (H).\\
		\hline
		\newpage
		\hline
		
		\multicolumn{2}{c}{\textbf{(B H' H)$+(y\,z)$}}\\ \hline
		\includegraphics[width = 0.15\textwidth]{figures/defclass/_B_-_H__-_H_-_H__-_B_.pdf} & The lifting conditions for (H) and (H)$_{(y\,z)}$ are the same, so the equality of the real lifting conditions follows from the statement for (B H' H).\\
		\hline
		\multicolumn{2}{c}{\textbf{(B M)$+(y\,z)$ }}\\\hline
		\includegraphics[width = 0.15\textwidth]{figures/defclass/_B_-_M_-_B_.pdf} & The dual bitangent motif of (B)  yields $j=3$. Substituting it gives the same real lifting conditions for (B) and (M). The real lifting conditions for (M) and (M)$_{(y\,z)}$ are the same, so  the  statement follows from the first part. \\
		\hline
		\multicolumn{2}{c}{ }\\ \hline
		\multicolumn{2}{c}{\textbf{(D L' Q), (D L' Q Q' R), (D L O) }}\\ 
		\hline 
		\multicolumn{2}{l}{
			(D): \hspace{1.8cm} $(-s_{10}s_{11})^is_{0i}s_{22}>0$}
		\\
		\multicolumn{2}{l}{	(L'),(Q),(Q');(R):  $s_{00}s_{22}>0$	
		}\\
		\multicolumn{2}{l}{	(L),(O):\hspace{1.5cm} $-s_{10}s_{11}s_{01}s_{22}>0$
		}\\
		\hline
		\multicolumn{2}{c}{\textbf{(D L' Q) }}\\ \hline
		\includegraphics[width = 0.15\textwidth]{figures/defclass/_D_-_L__-_Q_.pdf} & The dual bitangent motif of shape (D) yields $i=0$, so the real lifting conditions for all shapes in this deformation class coincide.\\ \hline
		\multicolumn{2}{c}{\textbf{(D L' Q Q' R)}} \\ \hline\includegraphics[width = 0.15\textwidth]{figures/defclass/_D_-_L__-_Q_-_Q__-_R_} & Same argument as for (D L' Q).\\ \hline
		\multicolumn{2}{c}{\textbf{(D L O)}}\\ \hline
		\includegraphics[width = 0.15\textwidth]{figures/defclass/_D_-_L_-_O_.pdf} & The dual bitangent motif of shape (D)  yields $i=1$, so the real lifting conditions for all shapes in this deformation class coincide.\\ \hline

		\multicolumn{2}{c}{ }\\ \hline
		\multicolumn{2}{c}{\textbf{(G I N)$+(x\,y)$, (G K U T T')}} \\ \hline
		\multicolumn{2}{l}{	(G):\hspace{4.2cm} $(-s_{10}s_{11})^is_{0i}s_{k,4-k}>0$}
		\\
		\multicolumn{2}{l}{	(I),(N): \hspace{3.7cm} $-s_{10}s_{11}s_{01}s_{k,4-k}>0$	
		}\\
		\multicolumn{2}{l}{	(K),(T),(T'),(T''), (U),(U'),(V): $s_{00}s_{k,4-k}>0$ 		
		}\\
		\hline		

		\multicolumn{2}{c}{\textbf{(G I N)$+(x\,y)$}}\\ \hline
		
		\includegraphics[width = 0.15\textwidth]{figures/defclass/_G_-_I_-_N_-_I_-_G_1.pdf}  \includegraphics[width = 0.15\textwidth]{figures/defclass/_G_-_I_-_N_-_I_-_G_2.pdf} & The dual bitangent motif of shape (G) yields $i=1$, so the real lifting conditions of (G),  (I) and (N) coincide. Since (N) and (N)$_{(x\,y)}$ have the same real lifting conditions, the  statement follows from the first part.\\ \hline
		\multicolumn{2}{c}{\textbf{(G K U T T')}}\\ \hline
		\includegraphics[width = 0.15\textwidth]{figures/defclass/_G_-_K_-_U_-_T_-_T__1.pdf} \includegraphics[width = 0.15\textwidth]{figures/defclass/_G_-_K_-_U_-_T_-_T__2.pdf} &The dual bitangent motif of shape (G)  yields $i=0$, so the real lifting conditions for all shapes in this deformation class coincide.\\ \hline
	\end{longtable}
\end{proof}	
\begin{rem}
Markwig, Payne and Shaw characterize the lifting conditions of bitangent shapes over arbitrary fields \cite{MPS21}.
	An analogous investigation to the proof of Theorem~\ref{theorem:lifting} shows that the lifting conditions again stay constant in every deformation class. Therefore, deformation classes are relevant for the lifting behavior of tropical bitangents over arbitrary fields, not only over real closed fields. 
\end{rem}

Table \ref{tab:tab11new} summarizes the real lifting conditions for the deformation classes.
\begin{small}
\begin{longtable}{c|c}
deformation class & Lifting conditions \\
\hline\endhead
(A) & $(-s_{1v}s_{1,v+1})^i s_{0i}s_{22}>0$ and $(-s_{u1}s_{u+1,1})^js_{j0}s_{22}>0$ \\
\hline
(B H' H), & \\
(B H' H)+$(yz)$, &  $(-s_{1v}s_{1,v+1})^{i+1} s_{0i}s_{21}>0$ and $-s_{21}s_{1v}s_{1,v+1}s_{40}>0$ \\
(H) & \\
\hline
(B M)+$(yz)$ & $(-s_{1v}s_{1,v+1})^{i+1} s_{0i}s_{21}>0$ and $s_{31}s_{1v}s_{1,v+1}s_{30}>0$\\
\hline
(B) & $(-s_{1v}s_{1,v+1})^{i+1} s_{0i}s_{21}>0$ and $(-s_{21})^{j+1}s_{31}^js_{1v}s_{1,v+1}s_{j0}>0$ \\& with $j\in\{0,1,2\}$\\
\hline
(C) & $(-s_{11}s_{12})^is_{0i}s_{20}>0$ and $(-s_{21}s_{12})^k s_{k,4-k}s_{20}>0\,\,\,$ if j=2\\
& $(-s_{11})^{i+1}s_{12}^is_{21}s_{0i}s_{j0}>0$ and $(-s_{21})^{k+1}s_{12}^ks_{11} s_{k,4-k}s_{j0}>0$ if j=1,3\\
\hline		
(D) & $(-s_{10}s_{11})^is_{0i}s_{22}>0$ with $i\in\{2,3,4\}$\\
\hline
(D L O), (P) & $-s_{10}s_{11}s_{01}s_{22}>0$ \\
\hline
(D L' Q),& \\
(D L' Q Q' R), & $s_{00}s_{22}>0$\\
(S), (T) & \\
\hline
(E), (E F J) & $(-s_{1v}s_{1,v+1})^{i+1} s_{0i}s_{20}>0$\\
\hline
(G) & $(-s_{10}s_{11})^is_{0i}s_{k,4-k}>0$ with $i\in\{2,3,4\}$\\
\hline
(G I N)+$(xy)$ &$-s_{10}s_{11}s_{01}s_{k,4-k}>0$ \\
\hline
(G K U U' T T'), &  \\
(G K U U' T- & $s_{00}s_{k,4-k}>0$\\
-  T' T'' V)+$(xy)$ & \\
\hline
rest & no conditions\\
\caption{Real lifting conditions of the deformation classes in their positions as in Figure \ref{fig:defclasses}.}\label{tab:tab11new}
\end{longtable}
\end{small}

We are now ready to give a proof of our main result. 

\smallskip 
\noindent
\emph{Proof of Theorem \ref{thm:Pluecker}.} Let $\Gamma$ be a generic tropicalization of a tropically smooth quartic curve $V(f)$ defined over $\KK_{\RR}$ and $\cT$ its dual triangulation.
	By Theorem \ref{theorem:lifting}, the real lifting conditions of the $7$ bitangent classes of $\Gamma$ only depend on their $7$ deformation classes. Furthermore, the deformation classes are uniquely determined by their dual deformation motifs in the triangulation $\cT$, as pointed out in Remark \ref{rem:dualmotifs}. Therefore, the real lifting conditions for the tropical bitangent classes of $\Gamma$ only depend on the triangulation $\cT$.
	
In order to prove the statement, we need to enumerate the dual deformation motifs of the deformation classes in the $1278$ unimodular regular triangulations of  $4\Delta_2$ modulo $S_3$-symmetry as computed in \cite{BJMS15}. Of these $S_3$-representatives  exactly eight do not satisfy the genericity constraint that a vertex of the curve with adjacent edges of directions $-e_1$, $-e_2$, $e_1+e_2$ needs to have a unique shortest adjacent edge.
For these eight cones we did run the same computations as for the generic cones, but we could not compute the lifting behavior of the bitangent class of shape (C) since this is not yet understood. However, since the numbers of real bitangents are already known classically, our computations might help to understand these special cases.

We implemented the search for the dual deformation motifs of the deformation classes in \polymake \cite{polymake:2000}. For each deformation class, we considered the real lifting conditions determined in Theorem \ref{theorem:lifting} and summarized in Table \ref{tab:tab11new}, and we evaluated them for all possible $2^{15}$ sign vectors. Again, we implemented this in \polymake obtaining that each sign vector satisfies the lifting conditions of  $1$, $2$, $4$ or $7$ deformation classes. A more detailed description of the computational procedure and codes can be found in \cite{2GP21}. 

	Finally, by \cite[Theorem 1.2 and Corollary 7.3]{CueMa20}, each bitangent class has either zero or exactly four lifts to totally real bitangents. Therefore, the smooth quartic curve $V(f)$ with smooth tropicalization $\Gamma$ has either $4$, $8$, $16$ or $28$ totally real bitangents. \hfill$\Box$

\clearpage

\appendix\section{}\label{sec:Appendix}
\noindent
In this section we provide further figures to help 
understanding the classification statement in Section \ref{sec:classification}. We recommend considering the figures together with the table in the proof of Theorem \ref{thm:classification}.

\subsection{Example of deformation class (B H' H)}
Let $\Gamma$ be a smooth tropical quartic curve  with a bitangent class $B$ 
 with dual bitangent motif in identity position contained in the subcomplex in Figure \ref{fig:subdivB-Hchanges1}. Note that  we are choosing the red edge $\overline{p_{12}p_{13}}$ in the corresponding picture in Figure \ref{fig:defclasses}. A similar example can be drawn for $\overline{p_{11}p_{12}}$.
\begin{figure}[h]
	\centering
	\begin{subfigure}[b]{0.2\textwidth}
		\centering
		\includegraphics[width=0.7\textwidth]{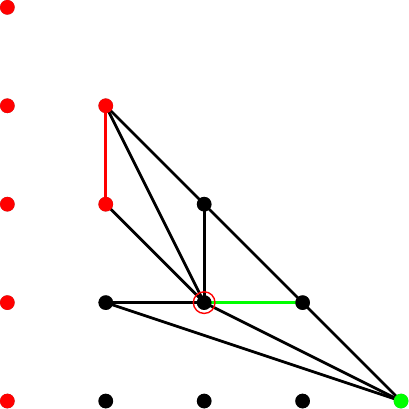}
		\caption*{}
	\end{subfigure}
		\begin{subfigure}[b]{.2\textwidth}
		\centering\includegraphics[width=0.75\textwidth]{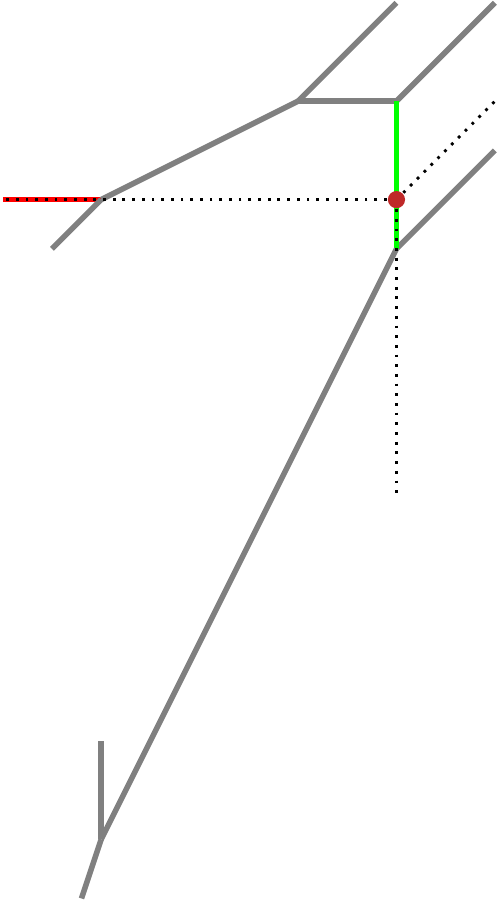}
		\caption*{(B)}
	\end{subfigure}
	\begin{subfigure}[b]{.2\textwidth}
		\centering\includegraphics[width=0.77\textwidth]{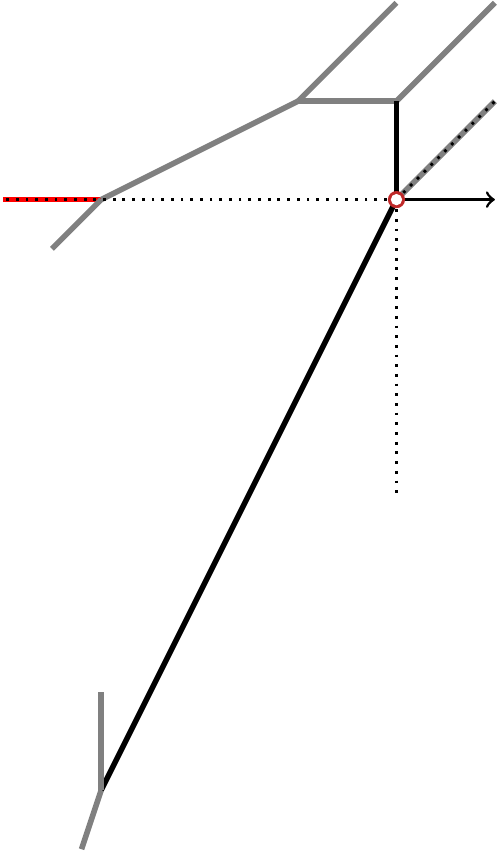}
		\caption*{(H')}
	\end{subfigure}
	\begin{subfigure}[b]{.2\textwidth}
		\centering\includegraphics[width=0.8\textwidth]{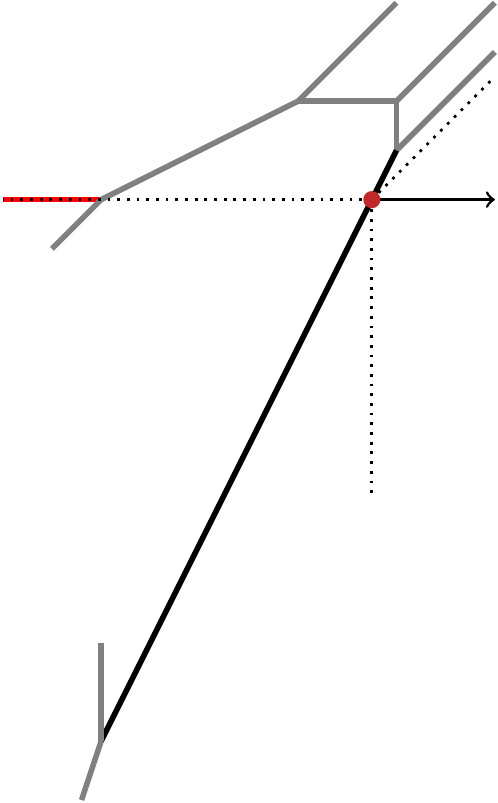}
		\caption*{(H)}
	\end{subfigure}	
	\caption{Example of deformation class (B H' H).	}\label{fig:subdivB-Hchanges1}
\end{figure}

\subsection{Example of deformation class (B H' H)$+(y\,z)$}
Let $\Gamma$ be a smooth tropical quartic curve  with a bitangent class $B$ 
with dual bitangent motif in identity position contained in the subcomplex in Figure \ref{fig:subdivBHchanges2}.  Note that  we are choosing the red edge $\overline{p_{12}p_{13}}$ in the corresponding picture in Figure \ref{fig:defclasses}. Similar examples can be drawn for $\overline{p_{10}p_{11}}$ and $\overline{p_{11}p_{12}}$.

\begin{figure}[h]
	\centering
\begin{subfigure}[b]{.2\textwidth}
	\centering
	\includegraphics[width=0.7\textwidth]{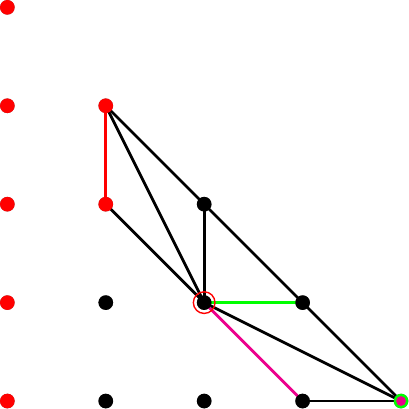}
	\caption*{}
\end{subfigure}
	\begin{subfigure}[b]{.3\textwidth}
		\centering\includegraphics[width=0.75\textwidth]{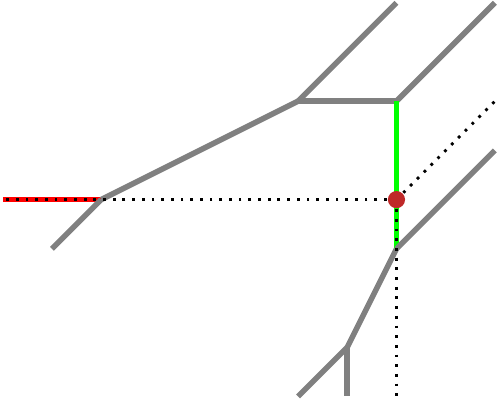}
		\caption*{(B)}
	\end{subfigure}
	\begin{subfigure}[b]{.3\textwidth}
		\centering\includegraphics[width=0.75\textwidth]{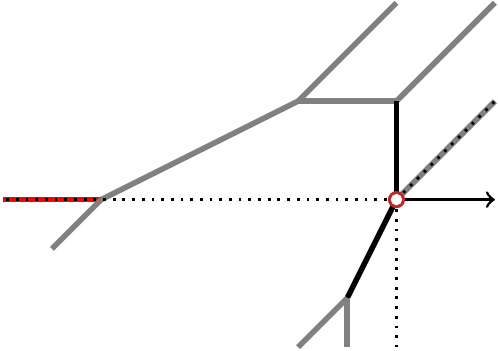}
		\caption*{(H')}
	\end{subfigure}
	\begin{subfigure}[b]{.3\textwidth}
		\centering\includegraphics[width=0.75\textwidth]{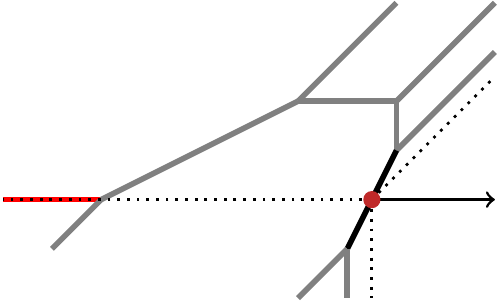}
		\caption*{(H)}
	\end{subfigure}
	\begin{subfigure}[b]{.3\textwidth}
		\centering\includegraphics[width=0.75\textwidth]{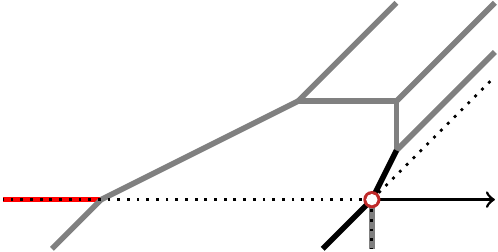}
		\caption*{(H')$_{(y\,z)}$}
	\end{subfigure}
	\begin{subfigure}[b]{.3\textwidth}
		\centering\includegraphics[width=0.75\textwidth]{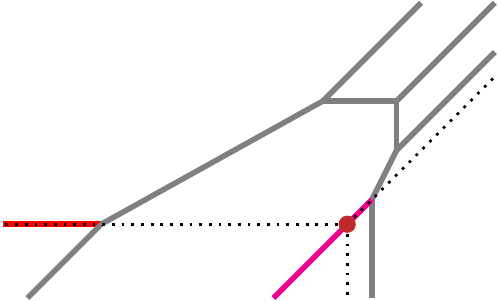}
		\caption*{(B)$_{(y\,z)}$}
	\end{subfigure}
		\caption{Example of deformation class (B H' H)$+(y\,z)$.	}\label{fig:subdivBHchanges2}
\end{figure}

\subsection{Example of deformation class (B M)$+(y\,z)$}
Let $\Gamma$ be a smooth tropical quartic curve  with a bitangent class $B$ 
 with dual bitangent motif in identity position contained in the subcomplex in Figure \ref{fig:subdivBMchanges1}.  Note that  we are choosing the red edge $\overline{p_{12}p_{13}}$ in the corresponding picture in Figure \ref{fig:defclasses}. Similar examples can be drawn for $\overline{p_{10}p_{11}}$ and $\overline{p_{11}p_{12}}$.
\begin{figure}[h]
	\centering
	\begin{subfigure}[b]{.14\textwidth}
	\centering
	\includegraphics[width=0.9\textwidth]{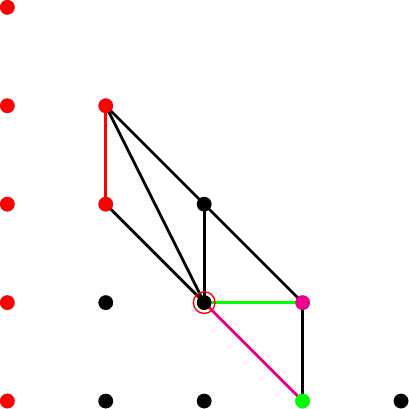}
	\caption*{}
\end{subfigure}
	\begin{subfigure}[b]{.26\textwidth}
		\centering\includegraphics[width=0.9\textwidth]{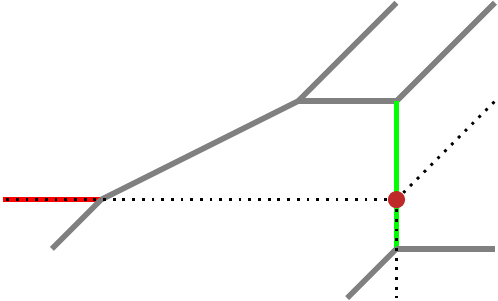}
		\caption*{(B)}
	\end{subfigure}
	\begin{subfigure}[b]{.26\textwidth}
		\centering\includegraphics[width=0.9\textwidth]{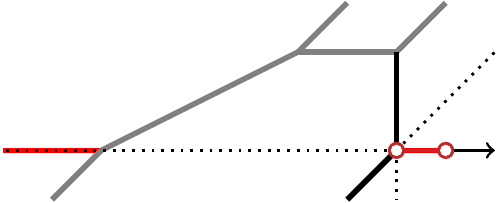}
		\caption*{(M)}
	\end{subfigure}
	\begin{subfigure}[b]{.26\textwidth}
		\centering\includegraphics[width=0.9\textwidth]{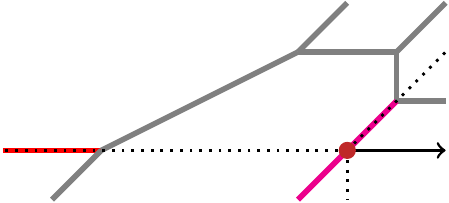}
		\caption*{(B)$_{(y\,z)}$}
	\end{subfigure}
	\caption{Example of deformation class (B M)$+(y\,z)$. }\label{fig:subdivBMchanges1}
\end{figure}

\subsection{Example of deformation class (D L' Q)}
Let $\Gamma$ be a smooth tropical quartic curve  with a bitangent class $B$ 
with dual bitangent motif in identity position contained in the subcomplex in Figure \ref{fig:subdivDL'Qchanges1}.  
\begin{figure}[h]
	\centering
		\begin{subfigure}{.14\textwidth}
		\centering\includegraphics[width=0.9\textwidth]{figures/defclass/_D_-_L__-_Q_.pdf}
		\caption*{}
	\end{subfigure}
	\begin{subfigure}{.25\textwidth}
		\centering\includegraphics[width=0.9\textwidth]{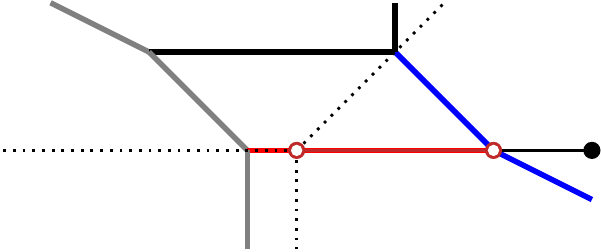}
		\caption*{(D)}
	\end{subfigure}
	\begin{subfigure}{.25\textwidth}
		\centering\includegraphics[width=0.9\textwidth]{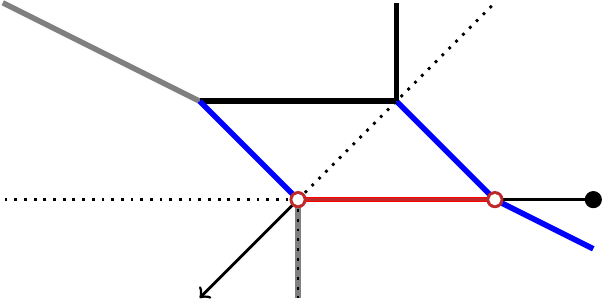}
		\caption*{(L')}
	\end{subfigure}
	\begin{subfigure}{.25\textwidth}
		\centering\includegraphics[width=0.9\textwidth]{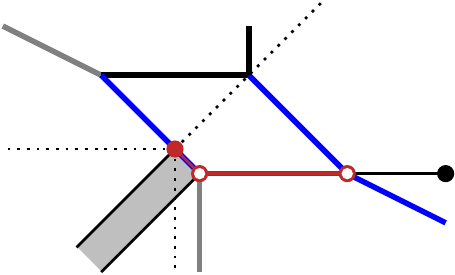}
		\caption*{(Q)}
	\end{subfigure}
	\caption{Example of deformation class (D L' Q).}\label{fig:subdivDL'Qchanges1}
\end{figure}

\subsection{Example of deformation class  (D L' Q Q' R)}
Let $\Gamma$ be a smooth tropical quartic curve  with a bitangent class $B$ 
 with dual bitangent motif in identity position contained in the subcomplex in Figure \ref{fig:subdivDL'QQ'Rchanges1}. 
\begin{figure}[h]
	\centering
			\begin{subfigure}[b]{.14\textwidth}
		\centering\includegraphics[width=0.9\textwidth]{figures/defclass/_D_-_L__-_Q_-_Q__-_R_.pdf}
		\caption*{}
	\end{subfigure}
	\begin{subfigure}[b]{.16\textwidth}
		\centering\includegraphics[width=1\textwidth]{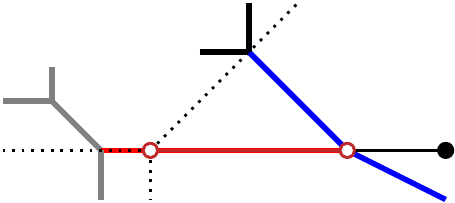}
		\caption*{(D)}
	\end{subfigure}
	\begin{subfigure}[b]{.16\textwidth}
		\centering\includegraphics[width=1\textwidth]{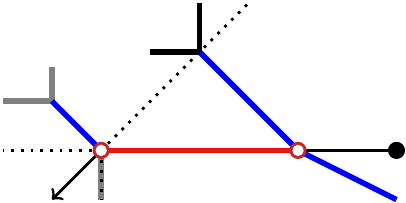}
		\caption*{(L')}
	\end{subfigure}
	\begin{subfigure}[b]{.16\textwidth}
		\centering\includegraphics[width=1\textwidth]{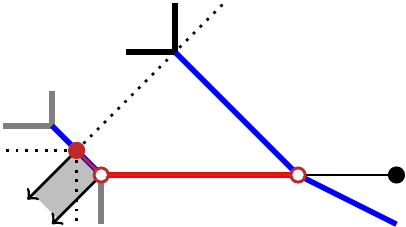}
		\caption*{(Q)}
	\end{subfigure}
\begin{subfigure}[b]{.16\textwidth}
	\centering\includegraphics[width=1\textwidth]{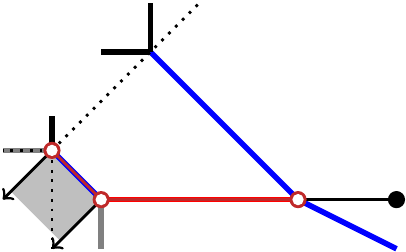}
	\caption*{(Q')}
\end{subfigure}
\begin{subfigure}[b]{.16\textwidth}
	\centering\includegraphics[width=1\textwidth]{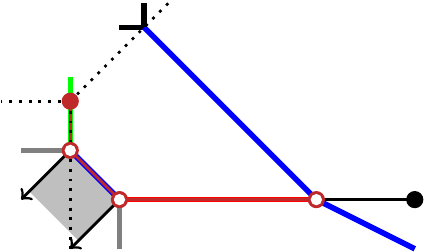}
	\caption*{(R)}
\end{subfigure}
	\caption{Example of deformation class  (D L' Q Q' R).}\label{fig:subdivDL'QQ'Rchanges1}
\end{figure}

\subsection{Example of deformation class (D L O)}
Let $\Gamma$ be a smooth tropical quartic curve  with a bitangent class $B$ 
 with dual bitangent motif in identity position contained in the subcomplex in Figure \ref{fig:subdivDLOchanges1}. 
\begin{figure}[h]
	\centering
	\begin{subfigure}[b]{.14\textwidth}
		\centering\includegraphics[width=0.9\textwidth]{figures/defclass/_D_-_L_-_O_.pdf}
		\caption*{}
	\end{subfigure}
	\begin{subfigure}[b]{.26\textwidth}
		\centering\includegraphics[width=1\textwidth]{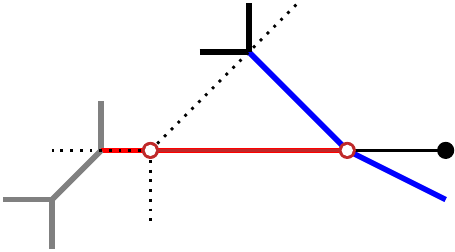}
		\caption*{(D)}
	\end{subfigure}
	\begin{subfigure}[b]{.26\textwidth}
		\centering\includegraphics[width=1\textwidth]{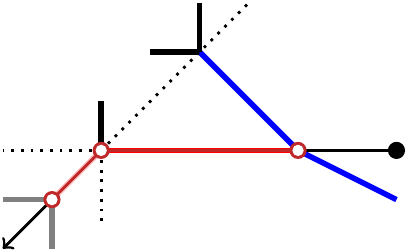}
		\caption*{(L)}
	\end{subfigure}
	\begin{subfigure}[b]{.26\textwidth}
		\centering\includegraphics[width=1\textwidth]{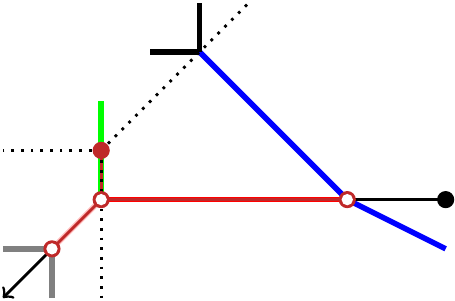}
		\caption*{(O)}
	\end{subfigure}
\caption{Example of deformation class  (D L O).}\label{fig:subdivDLOchanges1}
\end{figure}

\subsection{Deformation class (G I N)$+(x\,y)$}
For this deformation class there are two different cases of how the second tangency arises.
Let $\Gamma$ be a smooth tropical quartic curve  with a bitangent class $B$ 
with dual bitangent motif in identity position contained in the subcomplex in Figure \ref{fig:subdivGINIG1}. 
This figure depicts the case where one tangency is given by the blue edge $\overline{p_{11}p_{22}}$.
\begin{figure}[h]
	\centering
	\begin{subfigure}[b]{.15\textwidth}
		\centering\includegraphics[width=0.95\textwidth]{figures/defclass/_G_-_I_-_N_-_I_-_G_1.pdf}
		\caption*{}
	\end{subfigure}
	\begin{subfigure}[b]{.18\textwidth}
		\centering\includegraphics[width=1\textwidth]{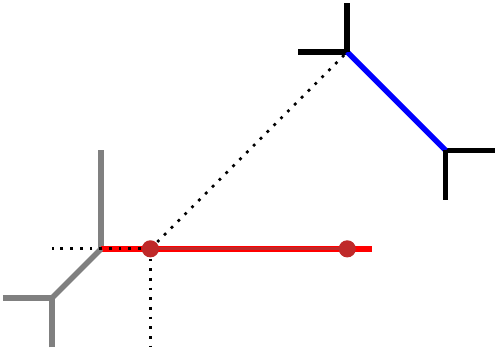}
		\caption*{(G)}
	\end{subfigure}
	\begin{subfigure}[b]{.18\textwidth}
		\centering\includegraphics[width=1\textwidth]{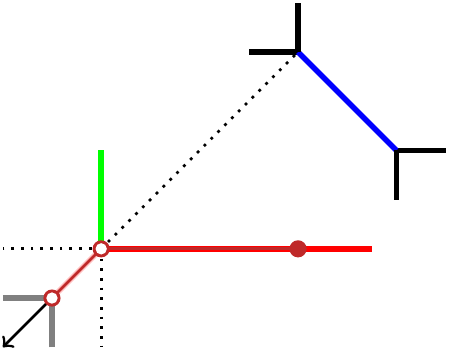}
		\caption*{(I)}
	\end{subfigure}
	\begin{subfigure}[b]{.16\textwidth}
		\centering\includegraphics[width=1\textwidth]{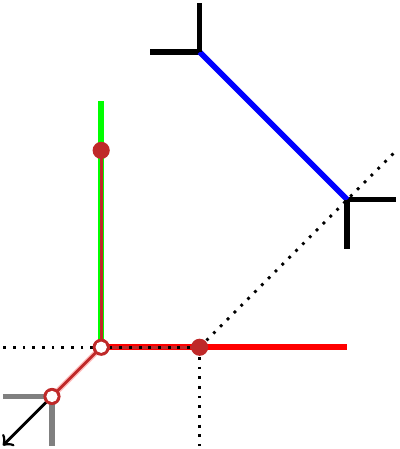}
		\caption*{(N)}
	\end{subfigure}
	\begin{subfigure}[b]{.14\textwidth}
		\centering\includegraphics[width=1\textwidth]{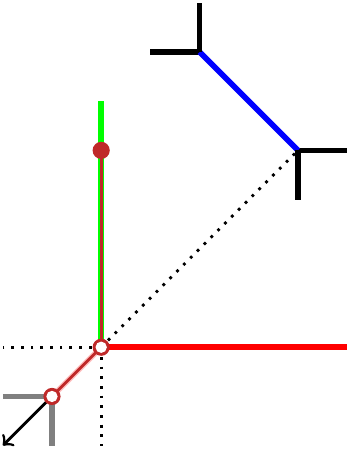}
		\caption*{(I)$_{(x\,y)}$}
	\end{subfigure}
	\begin{subfigure}[b]{.14\textwidth}
		\centering\includegraphics[width=1\textwidth]{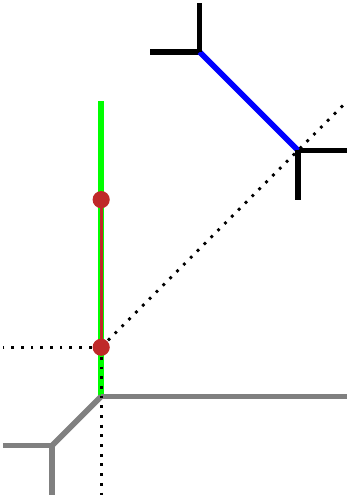}
		\caption*{(G)$_{(x\,y)}$}
	\end{subfigure}
	\caption{Example of deformation class  (G I N)$+(x\,y)$ with one tangency given by $\overline{p_{11}p_{22}}$.}\label{fig:subdivGINIG1}
\end{figure}

\noindent The situation is similar if we choose the other blue edge $\overline{p_{11}p_{04}}$, as illustrated in Figure~\ref{fig:subdivGINIG2}.
\begin{figure}[h]
	\centering
	\begin{subfigure}[b]{.15\textwidth}
		\centering\includegraphics[width=0.95\textwidth]{figures/defclass/_G_-_I_-_N_-_I_-_G_2.pdf}
		\caption*{}
	\end{subfigure}
	\begin{subfigure}[b]{.18\textwidth}
		\centering\includegraphics[width=1\textwidth]{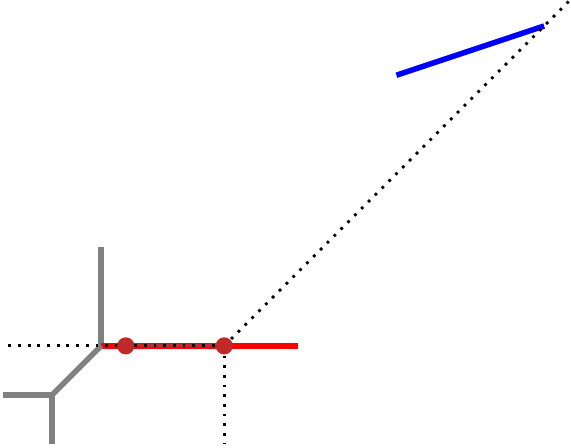}
		\caption*{(G)}
	\end{subfigure}
	\begin{subfigure}[b]{.17\textwidth}
		\centering\includegraphics[width=1\textwidth]{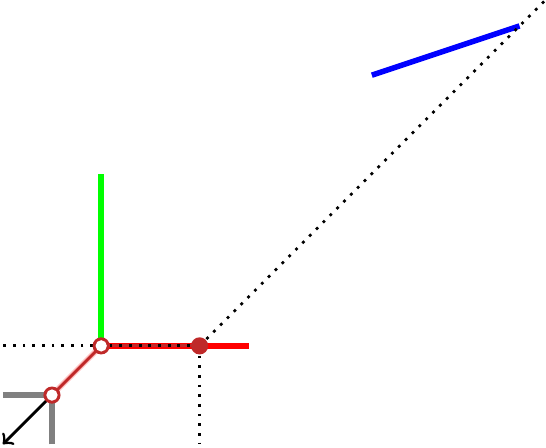}
		\caption*{(I)}
	\end{subfigure}
	\begin{subfigure}[b]{.15\textwidth}
		\centering\includegraphics[width=1\textwidth]{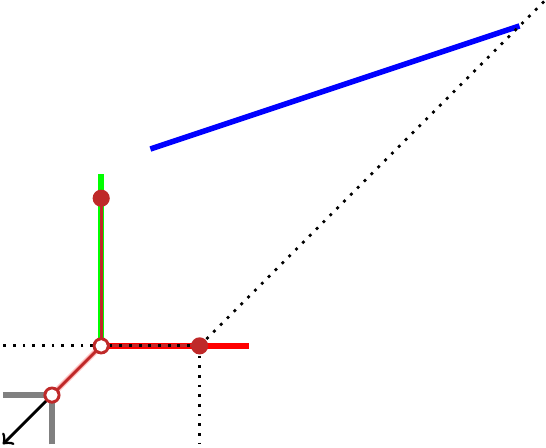}
		\caption*{(N)}
	\end{subfigure}
	\begin{subfigure}[b]{.15\textwidth}
		\centering\includegraphics[width=1\textwidth]{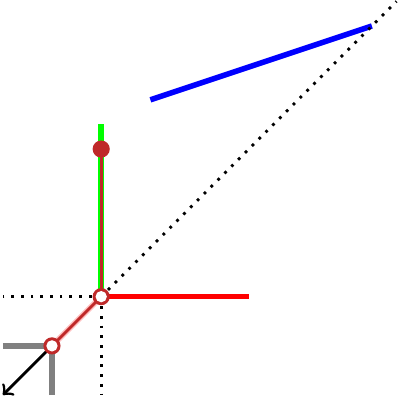}
		\caption*{(I)$_{(x\,y)}$}
	\end{subfigure}
	\begin{subfigure}[b]{.14\textwidth}
		\centering\includegraphics[width=1\textwidth]{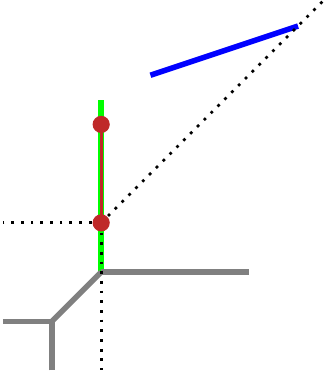}
		\caption*{(G)$_{(x\,y)}$}
	\end{subfigure}
	\caption{Example of deformation class  (G I N)$+(x\,y)$ with one tangency given by $\overline{p_{11}p_{04}}$.}\label{fig:subdivGINIG2}
\end{figure}

\newpage
\subsection{Deformation class (G K U T T')}
As for deformation class (G I N)$+(x\,y)$ there are two different cases of how the second tangency arises.
Let $\Gamma$ be a smooth tropical quartic curve  with a bitangent class $B$ 
with dual bitangent motif in identity position contained in the subcomplex  in Figure \ref{fig:subdivGKUT1}. 
This figure depicts the case where one tangency is given by the blue edge $\overline{p_{11}p_{22}}$.
\begin{figure}[h]
	\centering
	\begin{subfigure}[b]{.14\textwidth}
		\centering\includegraphics[width=0.9\textwidth]{figures/defclass/_G_-_K_-_U_-_T_-_T__1.pdf}
		\caption*{}
	\end{subfigure}
	\begin{subfigure}[b]{.18\textwidth}
		\centering\includegraphics[width=1\textwidth]{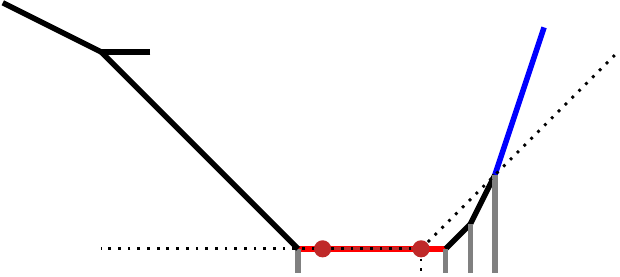}
		\caption*{(G)}
	\end{subfigure}
	\begin{subfigure}[b]{.17\textwidth}
		\centering\includegraphics[width=1\textwidth]{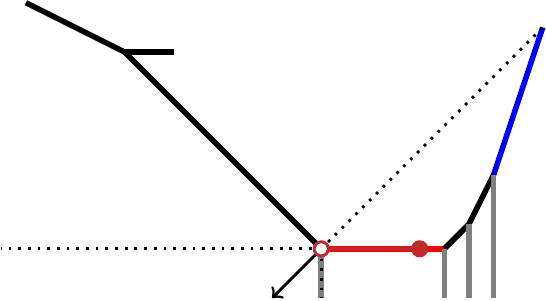}
		\caption*{(K)}
	\end{subfigure}
	\begin{subfigure}[b]{.16\textwidth}
		\centering\includegraphics[width=1\textwidth]{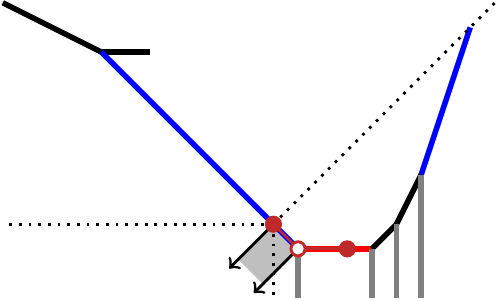}
		\caption*{(U)}
	\end{subfigure}
	\begin{subfigure}[b]{.15\textwidth}
		\centering\includegraphics[width=1\textwidth]{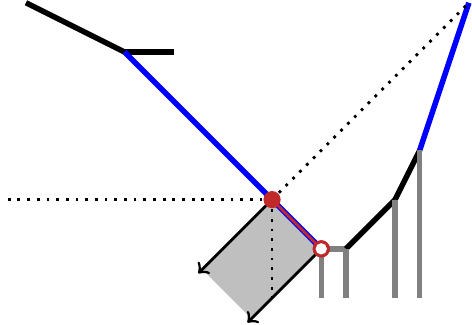}
		\caption*{(T')}
	\end{subfigure}
	\begin{subfigure}[b]{.14\textwidth}
		\centering\includegraphics[width=1\textwidth]{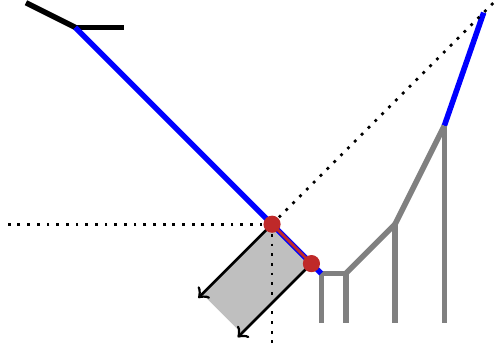}
		\caption*{(T)}
	\end{subfigure}
	\caption{Example of deformation class  (G K U T T') with one tangency given by $\overline{p_{11}p_{22}}$.}\label{fig:subdivGKUT1}
\end{figure}

\noindent The situation is similar if we choose the other blue edge $\overline{p_{11}p_{40}}$, as illustrated in Figure~\ref{fig:subdivGKUT2}.
\begin{figure}[h]
	\centering
	\begin{subfigure}[b]{.15\textwidth}
		\centering\includegraphics[width=0.9\textwidth]{figures/defclass/_G_-_K_-_U_-_T_-_T__2.pdf}
		\caption*{}
	\end{subfigure}
	\begin{subfigure}[b]{.18\textwidth}
		\centering\includegraphics[width=1\textwidth]{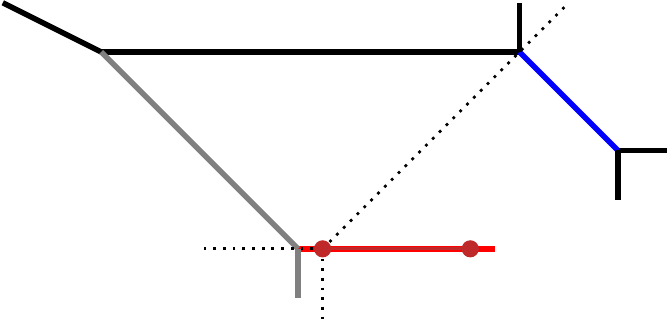}
		\caption*{(G)}
	\end{subfigure}
	\begin{subfigure}[b]{.17\textwidth}
		\centering\includegraphics[width=1\textwidth]{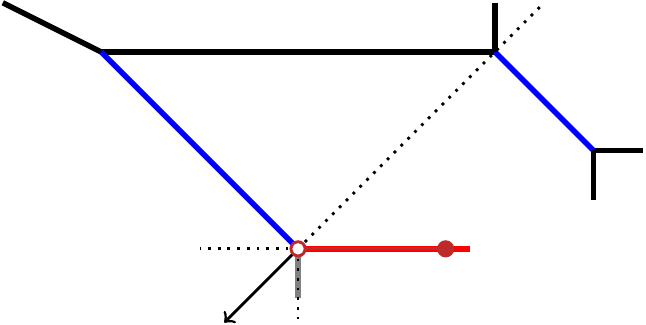}
		\caption*{(K)}
	\end{subfigure}
	\begin{subfigure}[b]{.16\textwidth}
		\centering\includegraphics[width=1\textwidth]{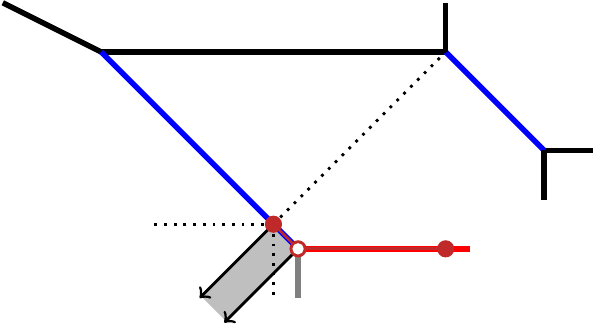}
		\caption*{(U)}
	\end{subfigure}
	\begin{subfigure}[b]{.15\textwidth}
		\centering\includegraphics[width=1\textwidth]{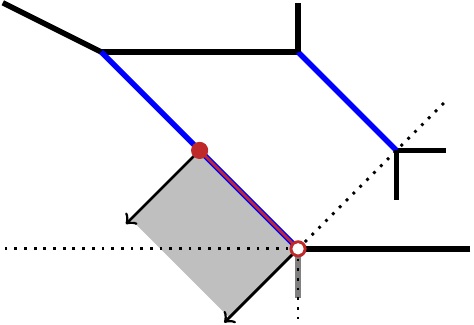}
		\caption*{(T')}
	\end{subfigure}
	\begin{subfigure}[b]{.14\textwidth}
		\centering\includegraphics[width=1\textwidth]{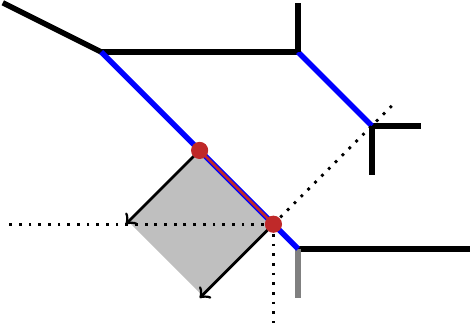}
		\caption*{(T)}
	\end{subfigure}
	\caption{Example of deformation class  (G K U T T') with one tangency given by $\overline{p_{11}p_{40}}$.}\label{fig:subdivGKUT2}
\end{figure}

\subsection{Deformation class (W X Y EE GG)}
Let $\Gamma$ be a smooth tropical quartic curve  with a bitangent class $B$ 
with dual bitangent motif in identity position contained in the subcomplex  in Figure \ref{fig:subdivWpart}. 
\begin{figure}[h]
	\centering
	\begin{subfigure}[b]{.14\textwidth}
		\centering\includegraphics[width=0.9\textwidth]{figures/defclass/_W_-_X_-_Y_-_EE_-_GG_.pdf}
		\caption*{}
	\end{subfigure}
	\begin{subfigure}[b]{.16\textwidth}
		\centering\includegraphics[width=1\textwidth]{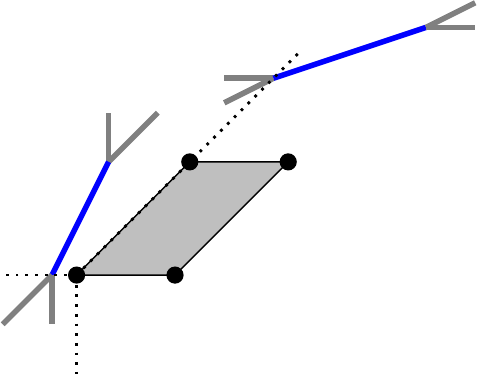}
		\caption*{(W)}
	\end{subfigure}
	\begin{subfigure}[b]{.16\textwidth}
		\centering\includegraphics[width=1\textwidth]{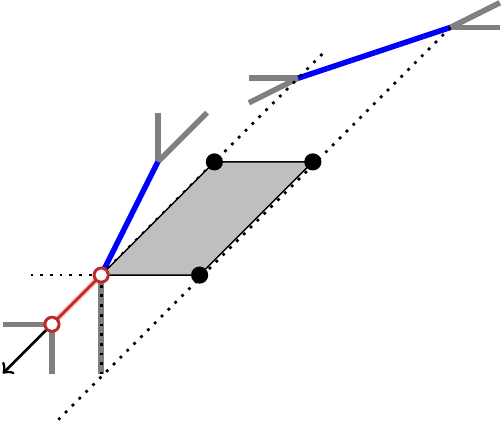}
		\caption*{(X)$_{(x\,z)}$}
	\end{subfigure}
	\begin{subfigure}[b]{.16\textwidth}
		\centering\includegraphics[width=1\textwidth]{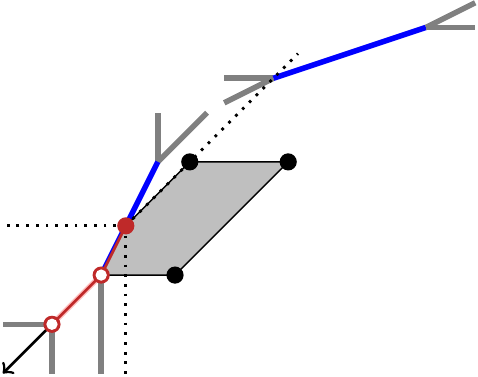}
		\caption*{(Y)$_{(x\,z)}$}
	\end{subfigure}
	\begin{subfigure}[b]{.16\textwidth}
		\centering\includegraphics[width=1\textwidth]{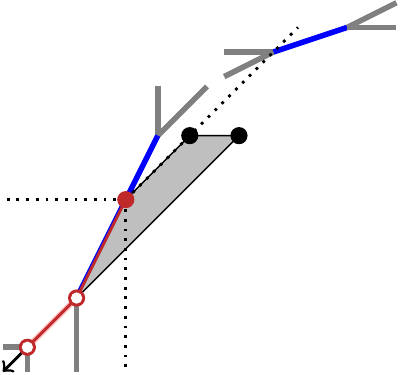}
		\caption*{(EE)}
	\end{subfigure}
	\begin{subfigure}[b]{.16\textwidth}
		\centering\includegraphics[width=1\textwidth]{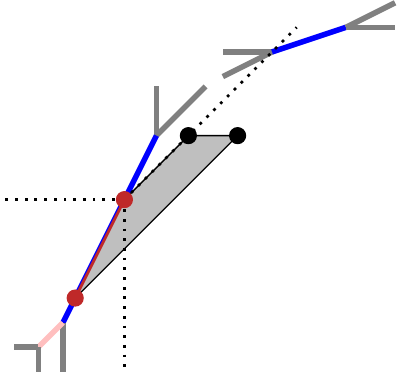}
		\caption*{(GG)}
	\end{subfigure}
	\caption{Example of deformation class  (W X Y EE GG).}\label{fig:subdivWpart}
\end{figure}

\subsection{Deformation class (W...HH)$+(x\,z)$}
Let $\Gamma$ be a smooth tropical quartic curve  with a bitangent class $B$ 
with dual bitangent motif in identity position contained in the subcomplex in Figure~\ref{fig:subdivWall}. 

\begin{figure}[h]
	\centering
\begin{subfigure}[b]{.2\textwidth}
	\centering\includegraphics[width=0.7\textwidth]{figures/defclass/_W_-to-_HH_.pdf}	
	\caption*{}
\end{subfigure}
\begin{subfigure}[b]{.24\textwidth}
	\centering\includegraphics[width=0.9\textwidth]{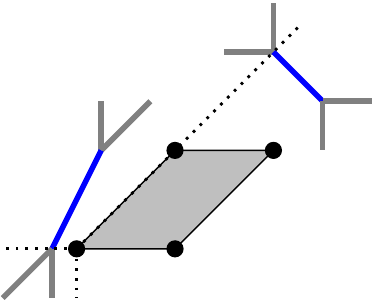}
	\caption*{(W)}
\end{subfigure}
\begin{subfigure}[b]{.24\textwidth}
	\centering\includegraphics[width=0.9\textwidth]{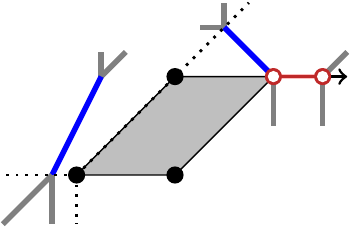}
	\caption*{(X)}
\end{subfigure}
\begin{subfigure}[b]{.24\textwidth}
	\centering\includegraphics[width=0.9\textwidth]{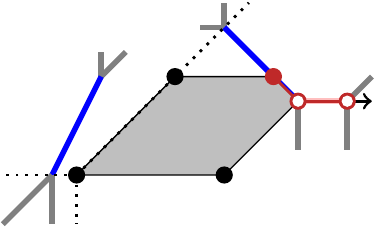}
	\caption*{(Y)}
\end{subfigure}
\begin{subfigure}[b]{.24\textwidth}
	\centering\includegraphics[width=0.9\textwidth]{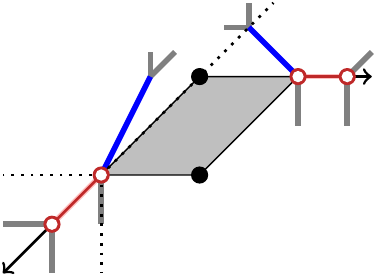}
	\caption*{(Z)}
\end{subfigure}
\begin{subfigure}[b]{.24\textwidth}
	\centering\includegraphics[width=0.9\textwidth]{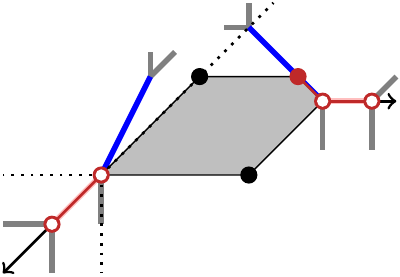}
	\caption*{(AA)}
\end{subfigure}
\begin{subfigure}[b]{.24\textwidth}
	\centering\includegraphics[width=0.9\textwidth]{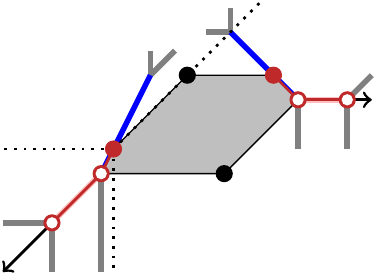}
	\caption*{(BB)}
\end{subfigure}
\begin{subfigure}[b]{.24\textwidth}
	\centering\includegraphics[width=0.75\textwidth]{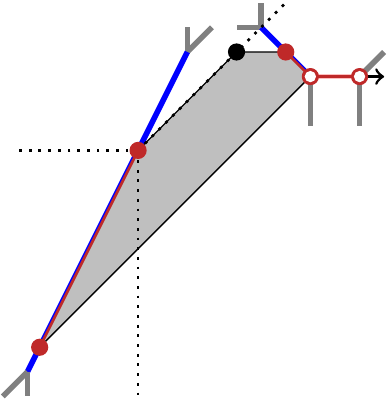}
	\caption*{(CC)}
\end{subfigure}
\begin{subfigure}[b]{.24\textwidth}
	\centering\includegraphics[width=0.75\textwidth]{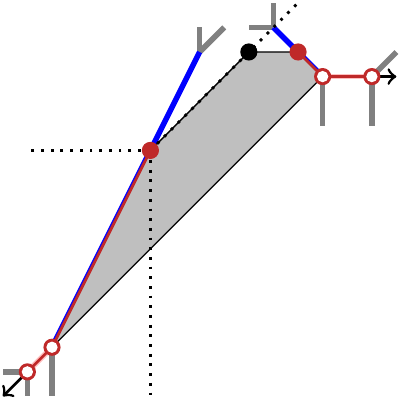}
	\caption*{(DD)}
\end{subfigure}
\begin{subfigure}[b]{.24\textwidth}
	\centering\includegraphics[width=0.75\textwidth]{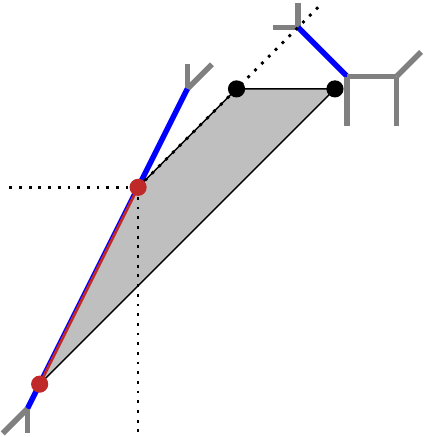}
	\caption*{(EE)}
\end{subfigure}
\begin{subfigure}[b]{.24\textwidth}
	\centering\includegraphics[width=0.75\textwidth]{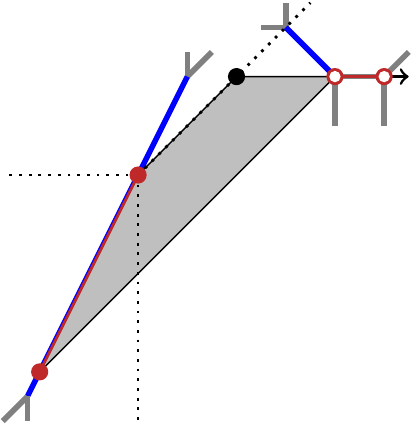}
	\caption*{(FF)}
\end{subfigure}
\begin{subfigure}[b]{.24\textwidth}
	\centering\includegraphics[width=0.75\textwidth]{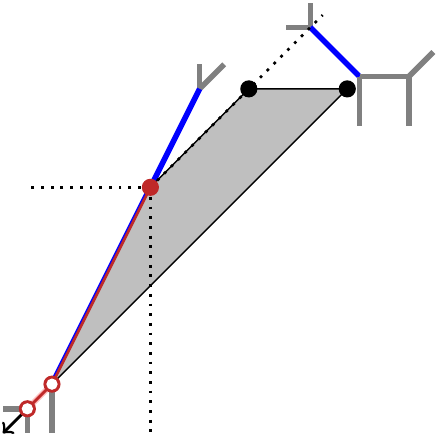}
	\caption*{(GG)}
\end{subfigure}
\begin{subfigure}[b]{.24\textwidth}
	\centering\includegraphics[width=0.75\textwidth]{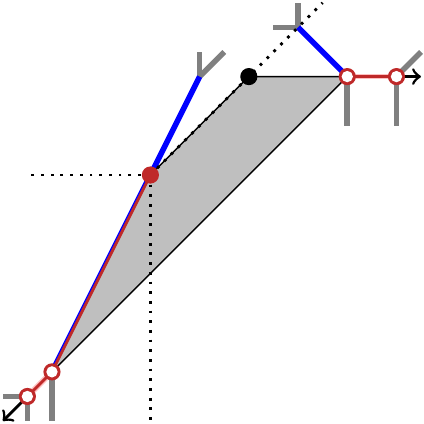}
	\caption*{(HH)}
\end{subfigure}
\begin{subfigure}[b]{.24\textwidth}
	\centering\includegraphics[width=0.9\textwidth]{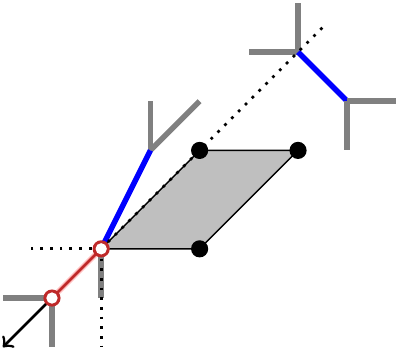}	
	\caption*{(X)$_{(x\,z)}$}
\end{subfigure}
\begin{subfigure}[b]{.24\textwidth}
	\centering\includegraphics[width=0.9\textwidth]{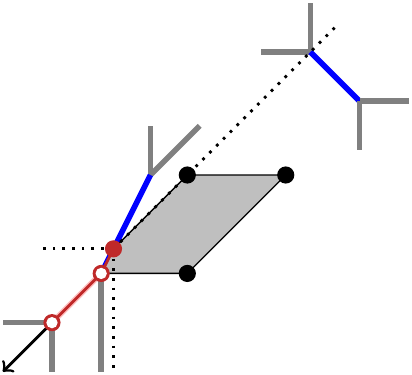}
	\caption*{(Y)$_{(x\,z)}$}
\end{subfigure}
\begin{subfigure}[b]{.24\textwidth}
	\centering\includegraphics[width=0.9\textwidth]{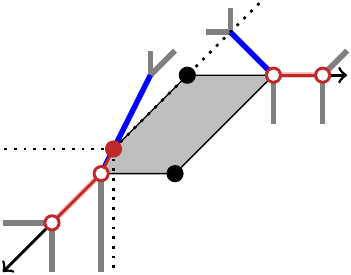}
	\caption*{(AA)$_{(x\,z)}$}
\end{subfigure}
\begin{subfigure}[b]{.24\textwidth}
	\centering\includegraphics[width=0.75\textwidth]{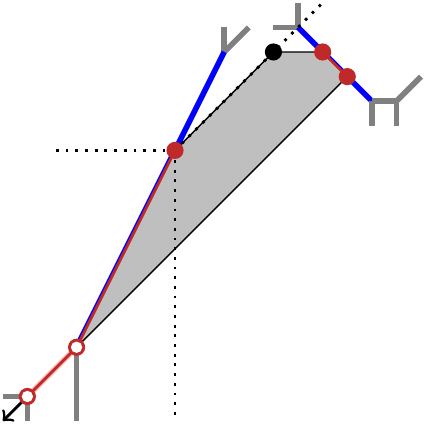}
	\caption*{(CC)$_{(x\,z)}$}
\end{subfigure}
\begin{subfigure}[b]{.24\textwidth}
	\centering\includegraphics[width=0.9\textwidth]{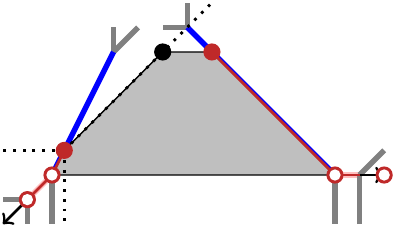}
	\caption*{(DD)$_{(x\,z)}$}
\end{subfigure}
\begin{subfigure}[b]{.24\textwidth}
	\centering\includegraphics[width=0.9\textwidth]{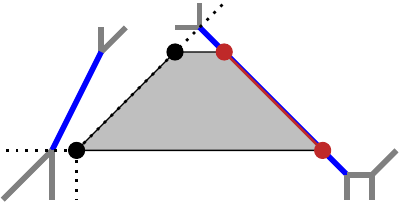}
	\caption*{(EE)$_{(x\,z)}$}
\end{subfigure}
\begin{subfigure}[b]{.24\textwidth}
	\centering\includegraphics[width=0.9\textwidth]{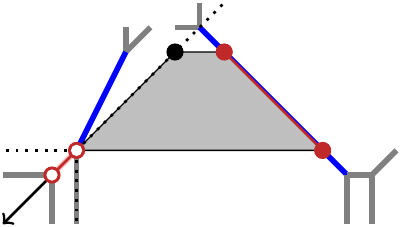}
	\caption*{(FF)$_{(x\,z)}$}
\end{subfigure}
\begin{subfigure}[b]{.24\textwidth}
	\centering\includegraphics[width=0.9\textwidth]{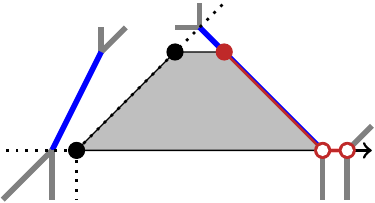}
	\caption*{(GG)$_{(x\,z)}$}
\end{subfigure}
\begin{subfigure}[b]{.24\textwidth}
	\centering\includegraphics[width=0.9\textwidth]{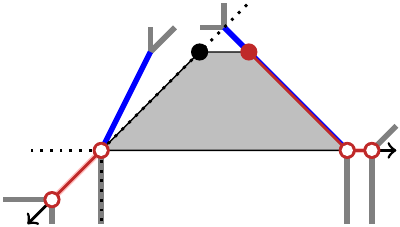}
	\caption*{(HH)$_{(x\,z)}$}
\end{subfigure}
	\caption{Example of deformation class  (W X Y Z AA ... HH).}\label{fig:subdivWall}
\end{figure}
\clearpage 
\bibliography{bibliography}

\begin{thebibliography}{10}

\bibitem{Aless13}
Daniele Alessandrini.
\newblock Logarithmic limit sets of real semi-algebraic sets.
\newblock {\em Adv. Geom.}, 13(1):155--190, 2013.

\bibitem{BLMPR16}
Matthew Baker, Yoav Len, Ralph Morrison, Nathan Pflueger, and Qingchun Ren.
\newblock Bitangents of tropical plane quartic curves.
\newblock {\em Math. Z.}, 282(3-4):1017--1031, 2016.

\bibitem{Basu:2006}
Saugata Basu, Richard Pollack, and Marie-Fran{\c{c}}oise Roy.
\newblock {\em Algorithms in Real algebraic geometry}, volume~36.
\newblock Springer Berlin Heidelberg, 2006.

\bibitem{BJMS15}
Sarah Brodsky, Michael Joswig, Ralph Morrison, and Bernd Sturmfels.
\newblock Moduli of tropical plane curves.
\newblock {\em Res. Math. Sci.}, 2:Art. 4, 31, 2015.

\bibitem{ChJi17}
Melody Chan and Pakawut Jiradilok.
\newblock Theta characteristics of tropical {$K_4$}-curves.
\newblock In {\em Combinatorial algebraic geometry}, volume~80 of {\em Fields
  Inst. Commun.}, pages 65--86. Fields Inst. Res. Math. Sci., Toronto, ON,
  2017.

\bibitem{CueMa20}
Maria~Angelica Cueto and Hannah Markwig.
\newblock Combinatorics and real lifts of bitangents to tropical quartic
  curves.
\newblock \arXiv{2004.10891v2}, 2021.

\bibitem{DLRS10}
Jes\'{u}s~A. De~Loera, J\"{o}rg Rambau, and Francisco Santos.
\newblock {\em Triangulations}, volume~25 of {\em Algorithms and Computation in
  Mathematics}.
\newblock Springer-Verlag, Berlin, 2010.
\newblock Structures for algorithms and applications.

\bibitem{polymake:2000}
Ewgenij Gawrilow and Michael Joswig.
\newblock {\tt polymake}: a framework for analyzing convex polytopes.
\newblock In {\em Polytopes---combinatorics and computation ({O}berwolfach,
  1997)}, volume~29 of {\em DMV Sem.}, pages 43--73. Birkh\"auser, Basel, 2000.

\bibitem{2GP21}
Alheydis Geiger and Marta Panizzut.
\newblock Computing tropical bitangents of smooth quartics in
  \texttt{polymake}.
\newblock 2021.

\bibitem{JPS21}
Michael Joswig, Marta Panizzut, and Bernd Sturmfels.
\newblock The {S}chl\"{a}fli fan.
\newblock {\em Discrete Comput. Geom.}, 64(2):355--381, 2020.

\bibitem{LeLe18}
Heejong Lee and Yoav Len.
\newblock Bitangents of non-smooth tropical quartics.
\newblock {\em Port. Math.}, 75(1):67--78, 2018.

\bibitem{LeMa19}
Yoav Len and Hannah Markwig.
\newblock Lifting tropical bitangents.
\newblock {\em J. Symbolic Comput.}, 96:122--152, 2020.

\bibitem{MS15}
Diane Maclagan and Bernd Sturmfels.
\newblock {\em Introduction to tropical geometry}, volume 161 of {\em Graduate
  Studies in Mathematics}.
\newblock American Mathematical Society, Providence, RI, 2015.

\bibitem{MPS21}
Hannah Markwig, Kris Shaw, and Sam Payne.
\newblock Bitangents to plane quartics via tropical geometry:
  $\mathbb{A}^1$-enumeration, rationality, and avoidance loci.
\newblock In preparation.

\bibitem{Maslov}
Victor.~P. Maslov.
\newblock On a new superposition principle for optimization problem.
\newblock {\em S\'eminaire \'Equations aux d\'eriv\'ees partielles
  (Polytechnique) dit aussi "S\'eminaire Goulaouic-Schwartz"}, 1985-1986.
\newblock talk:24.

\bibitem{PaVi19}
Marta Panizzut and Magnus~Dehli Vigeland.
\newblock Tropical lines on cubic surfaces.
\newblock \arXiv{0708.3847}, 2019.

\bibitem{Plue34}
Julius Pl\"{u}cker.
\newblock Solution d'une question fondamentale concernant la th\'eorie
  g\'en\'erale des courbes.
\newblock {\em Journal für die reine und angewandte Mathematik}, 12:111--114,
  1834.

\bibitem{Plue39}
Julius Pl\"{u}cker.
\newblock {\em {Theorie der algebraischen Curven: gegr\"{u}ndet auf eine neue
  Behandlungsweise der analytischen Geometrie}}.
\newblock Bonn: Adolph Marcus., 1839.

\bibitem{Zeu73}
Hieronymus~Georg Zeuthen.
\newblock Sur les dif\'erentes formes des curbes planes du quatri\`eme ordre.
\newblock {\em Math. Ann.}, 7:408--432, 1873.

\end{thebibliography}
\end{document}